\documentclass{amsart}
\usepackage[margin=4cm]{geometry}
\usepackage[T1]{fontenc}
\usepackage[utf8]{inputenc}
\usepackage{amsmath,amssymb,amsthm,mathtools}
\usepackage{enumitem}
\usepackage{microtype}
\usepackage{xcolor}
\usepackage{hyperref}
\usepackage{mathtools}

\DeclarePairedDelimiter\floor{\lfloor}{\rfloor}
\hypersetup{
  colorlinks=true,
  linkcolor=blue,
  citecolor=blue,
  urlcolor=blue
}
\newtheorem{theorem}{Theorem}[section]
\newtheorem{proposition}[theorem]{Proposition}
\newtheorem{lemma}[theorem]{Lemma}
\newtheorem{corollary}[theorem]{Corollary}

\theoremstyle{definition}
\newtheorem{definition}[theorem]{Definition}
\newtheorem{example}[theorem]{Example}

\theoremstyle{remark}
\newtheorem{remark}[theorem]{Remark}
\newtheorem{question}[theorem]{Question}
\newtheorem{maintheorem}{Theorem}

\newtheorem{maincorollary}[maintheorem]{Corollary}

\newcommand{\gir}{\operatorname{girth}}

\newcommand{\supp}{\operatorname{supp}}

\newcommand{\Alt}{\operatorname{Alt}}
\allowdisplaybreaks
\title[Infinite Girth, Finite Quotients, and Nielsen Connectivity in Groups]{Infinite Girth, Finite Quotients, and Nielsen Connectivity in Groups}

\author[Mishra]{Pratyush Mishra}
\address{HUN-REN Alfréd Rényi Institute of Mathematics, 13-15 street, Reáltanoda, 1053 Budapest, Hungary}
\email{mishrapratyushkumar@gmail.com}
\thanks{P.M. was supported by the National Research, Development and Innovation Office (NKFIH) Highlight Grant No. K153681}
\date{\today}
\subjclass[2020]{20F65, 20B07, 20F05, 20C30, 05C25}
\keywords{Infinite girth, residually finite groups, alternating groups, profinite rigidity, group laws}
\begin{document}
\begin{abstract}
We develop new methods for studying infinite girth in finitely generated residually finite groups, particularly those containing no non-abelian free subgroups and satisfying no non-trivial group law. Our main contribution is a probabilistic Nielsen-component lifting principle, based on product replacement graphs, which provides a criterion for transferring suitable generation and word-avoidance properties of finite quotients to the ambient group. 

As an application, we prove that every finitely generated group admitting infinitely many pairwise non-isomorphic non-abelian finite simple quotients has infinite girth, with extensions to quasisimple and, under additional hypotheses, almost-simple quotients. We further establish infinite girth for the B. H. Neumann group and the Bou-Rabee–Seward group, obtaining new results on the occurrence of non-abelian finite simple quotients and sections. By a different construction, we also prove that \(\operatorname{Alt}_{\mathrm{fin}}(\mathbb Z)\rtimes\mathbb Z\) has infinite girth. Finally, we establish a marked finite-quotient characterization of infinite girth for finitely generated residually finite groups and prove that, for linear groups over fields of characteristic zero, large marked girth can be realized in congruence quotients. We conclude with several open questions and remarks arising from our results.

\end{abstract}
\maketitle
\enlargethispage{4pt}

\section{Introduction}\label{sec:introduction}
The girth of a finitely generated group \(G\), denoted by \(\operatorname{girth}(G)\), is defined as the supremum of the girths of its Cayley graphs \(\operatorname{Cay}(G,S)\), where \(S\) ranges over all finite generating sets of \(G\). The problem of determining whether a group has infinite girth has attracted considerable attention, particularly in connection with the existence of free subgroups and group laws.

For groups admitting suitable dynamical or geometric actions, an infinite girth can often be established using ping-pong arguments. These methods exploit the underlying action to construct generating sets with no short relations. However, they are generally unavailable for groups containing no non-abelian free subgroups. In this setting, establishing infinite girth requires different techniques. Indeed, the relatively few known examples of groups with infinite girth that contain no non-abelian free subgroups and satisfy no non-trivial group law--including Thompson's group \(F\) \cite{Brin2010, AkhmedovSteinTaback2011}, the first Grigorchuk group \cite{bartholdi2015ordering}, and certain iterated wreath products \cite[Lemma 2.3]{akhmedov2003girth}--rely on methods adapted to their particular actions on intervals or rooted trees. This leaves a natural question: **to what extent can infinite girth be established through intrinsic algebraic properties, without relying on a specific geometric or dynamical action?** The problem is especially compelling for residually finite groups, where finite quotients provide a potential source of information about the girth of the original group.

In this paper, we develop new methods for studying infinite girth through finite quotients, with particular emphasis on residually finite groups and groups without non-abelian free subgroups. Our motivation comes in part from constructions that exhibits interesting subgroup growth and residual finiteness properties, including those of Pyber \cite{MR2031168} and Bou-Rabee and Seward \cite{BouRabeeSeward+2016+199+204}. These constructions are closely related to B. H. Neumann's classical construction of a continuous family of pairwise non-isomorphic two-generated groups \cite{neumann1937some}.

A central theme of the paper is the relationship between the girth of a finitely generated group and the structure of its finite quotients. We thank László Pyber for suggesting the following two questions, which are also of independent interest.

\begin{question}\label{pyberfinitesimplenonabelian}
If a finitely generated group $\Gamma$ admits infinitely many non-isomorphic non-abelian finite simple quotients, must $\gir(\Gamma)=\infty$?
\end{question}

\begin{question}\label{compositionfactor}
Does there exist a finitely generated infinite group $\Gamma$ and infinitely many distinct finite-index normal subgroups $N_i\trianglelefteq\Gamma$ such that the finite quotients $\Gamma/N_i$, collectively, have infinitely many pairwise non-isomorphic non-abelian composition factors?
\end{question}

The first question is not straightforward. There are two principal obstructions to transferring large girth from finite quotients to the ambient group. First, a generating set of large girth in a quotient need not lift to a generating set of the original group with large girth. Second, the absence of non-trivial group laws alone does not guarantee infinite girth: Olshanski's result, in the formulation given in \cite[Theorem 4.1]{akhmedov2003girth}, provides groups with finite girth that satisfy no non-trivial law. Thus, a successful lifting argument must simultaneously address the generation and avoidance of short relations.

Our first main result is a probabilistic lifting criterion based on the product replacement graph and its Nielsen components. Roughly speaking, the criterion identifies conditions on a sequence of finite quotients under which generating tuples with prescribed properties can be transferred to the ambient group. The crucial point is that the argument uses redundant generating tuples and the connectivity properties of their Nielsen components, rather than requiring a particular generating set of a quotient to lift directly. To our knowledge, this is the first result of such kind.

\begin{maintheorem}\label{theroemA}[Redundant Nielsen-component lifting]
Let $\Gamma$ be a finitely generated group, let $d=d(\Gamma)$ denote the minimal cardinality of a generating set of $\Gamma$, and fix
\[
k\geq \max\{d+1,3\}.
\]
Suppose that there is a sequence of epimorphisms
\[
\pi_i\colon\Gamma\twoheadrightarrow Q_i
\]
onto finite groups satisfying the following conditions:
\begin{enumerate}
\item[\rm (i)] Each $Q_i$ is two-generated and has spread at least $2$.

\item[\rm (ii)] If $x_i,y_i$ are chosen independently and uniformly from $Q_i$, then
\[
\mathbb P\bigl(\langle x_i,y_i\rangle=Q_i\bigr)
\longrightarrow 1.
\]

\item[\rm (iii)] For every non-trivial word $w\in F_k$, if $\mathbf q_i$ is chosen uniformly from $Q_i^k$, then
\[
\mathbb P\bigl(w(\mathbf q_i)=1\bigr)
\longrightarrow 0.
\]
\end{enumerate}
Then, for every $R\geq1$, there exists a generating $k$-tuple $\mathbf S_R$ of $\Gamma$ such that
\[
\gir(\Gamma,\mathbf S_R)>R.
\]
\end{maintheorem}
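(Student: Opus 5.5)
The plan is to reduce the problem to finitely many words, pick a random tuple in a single finite quotient that avoids all of them, move it inside its Nielsen class to the image of a generating tuple of $\Gamma$, and lift.

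\textbf{Reduction to finitely many words.} Fix $R$ and let $W_R\subset F_k$ be the finite set of non-trivial reduced words of length at most $R$. The condition $\gir(\Gamma,\mathbf S)>R$ says that no $w\in W_R$ has $w(\mathbf S)=1$. If $\pi_i(\mathbf S)=\mathbf q$ and $w(\mathbf q)\neq 1$, then $w(\mathbf S)\neq1$. So it suffices to find an index $i$, a generating $k$-tuple $\mathbf S$ of $\Gamma$, and a tuple $\mathbf q=\pi_i(\mathbf S)\in Q_i^k$ with $w(\mathbf q)\neq1$ for every $w\in W_R$.

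\textbf{Choosing a good tuple in $Q_i$.} By (iii) and a union bound over the finite set $W_R$,
\[
\mathbb P\bigl(\exists\, w\in W_R:\ w(\mathbf q_i)=1\bigr)\longrightarrow 0 .
\]
By (ii), the probability that the first two coordinates of $\mathbf q_i$ already generate $Q_i$ tends to $1$. So for large $i$ the set $B_i$ of tuples that avoid $W_R$ and whose first two coordinates generate $Q_i$ has density tending to $1$.

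\textbf{Matching with the image of a generating tuple of $\Gamma$.} Fix a generating $d$-tuple $(g_1,\dots,g_d)$ of $\Gamma$ and pad it with identities to a $k$-tuple $\mathbf T$, which uses $k\ge d+1$. Then $\pi_i(\mathbf T)$ is a generating $k$-tuple of $Q_i$. The set of images $\pi_i(\mathbf S)$, as $\mathbf S$ ranges over generating $k$-tuples of $\Gamma$, is invariant under Nielsen moves, because Nielsen moves on $\Gamma^k$ preserve generation and commute with $\pi_i$. It therefore contains the whole Nielsen component $\mathcal C_i$ of $\pi_i(\mathbf T)$ in the product replacement graph of $Q_i$. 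It thus suffices to show $\mathcal C_i\cap B_i\neq\emptyset$.

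\textbf{Key step: connectivity of redundant tuples.} Let $\mathcal R_i$ be the set of $k$-tuples containing a generating pair among some two coordinates. We use $k\ge3$ and spread at least $2$ (hypothesis (i)) to show that all of $\mathcal R_i$ lies in the same Nielsen component as $\pi_i(\mathbf T)$.

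1. \emph{Normalising a tuple in $\mathcal R_i$.} Given $(x,y,z_3,\dots,z_k)$ with $\langle x,y\rangle=Q_i$, write each $z_j$ as a word in $x,y$. Nielsen moves then turn the tuple into $(x,y,1,\dots,1)$.
2. \emph{Connecting generating pairs.} Next we must show that any two generating pairs $(x,y,1,\ldots)$ and $(x',y',1,\ldots)$ are connected. Spread $\ge2$ gives, for the non-trivial elements $x,x'$, an element $s$ with $\langle x,s\rangle=\langle x',s\rangle=Q_i$. The third slot lets us pass
\[
(x,y,1)\to(x,y,s)\to(x,1,s)\to(x,x',s)\to(1,x',s)\to(y',x',s)\to(y',x',1),
\]
reordering as needed. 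Each arrow is a legal sequence of Nielsen moves because the remaining pair still generates: we clear or insert a coordinate only when it is a word in the two coordinates that generate $Q_i$.
3. \emph{Placing $\pi_i(\mathbf T)$ in $\mathcal R_i$.} It remains to connect $\pi_i(\mathbf T)$ itself to $\mathcal R_i$. Since $Q_i$ is two-generated and the last coordinate of $\pi_i(\mathbf T)$ is $1$, that slot is free. We fill it with $a$, and then with $b$ via another free slot when $k\ge d+2$, where $(a,b)$ is a generating pair of $Q_i$ expressible in the $\pi_i(g_j)$. The original coordinates can then be cleared.

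I expect this last step, reaching $\mathcal R_i$ from $\pi_i(\mathbf T)$ when $k=d+1$ exactly, to be the main obstacle. In that case only one slot is free, so the step needs care. My first attempt would be the spread argument: put $s$ into the free slot, choosing $s$ to generate together with a suitable non-trivial coordinate. An alternative is to invoke a known result that $N_k(Q)$ is connected on redundant tuples for $k\ge d(Q)+1$.

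Once step 3 is done, $B_i\subseteq\mathcal R_i\subseteq\mathcal C_i$, and $B_i\neq\emptyset$ for large $i$. Choosing $\mathbf q\in B_i$ and lifting along the Nielsen path from $\pi_i(\mathbf T)$ to $\mathbf q$ gives a generating $k$-tuple $\mathbf S_R$ of $\Gamma$ with $\gir(\Gamma,\mathbf S_R)>R$.
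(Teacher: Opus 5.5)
Your architecture is the paper's. You pad a generating $d$-tuple of $\Gamma$ with $1$'s, use (ii), (iii) and a union bound to find $\mathbf q\in Q_i^k$ whose first two entries generate $Q_i$ and which satisfies no non-trivial word of length at most $R$, connect $\pi_i(\mathbf T)$ to $\mathbf q$ by Nielsen moves, and lift. The only difference is the connectivity step. The paper simply cites the result attributed to Evans: for a two-generated group of spread at least $2$ and $k\ge 3$, all redundant generating $k$-tuples lie in one Nielsen component. That citation is your ``alternative''. You instead prove this by hand.

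Your steps 1 and 2 are correct. Step 2 needs $x,x'\neq 1$, which holds for large $i$: (iii) applied to $[a_1,a_2]$ forces $Q_i$ to be non-abelian, so neither entry of a generating pair can be trivial.

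The step you flag as the main obstacle is not one. Your own spread idea closes it at once, with no case split between $k=d+1$ and $k\ge d+2$:
\begin{enumerate}
\item[(a)] Write $\pi_i(\mathbf T)=(h_1,\dots,h_d,1,\dots,1)$ and pick $h_j\neq 1$, which exists since $Q_i\neq 1$.
\item[(b)] Spread at least $2$, used only with $x_1=x_2=h_j$, gives $s$ with $\langle h_j,s\rangle=Q_i$.
\item[(c)] The $h_l$ generate $Q_i$, so $s$ is a word in them, and Nielsen moves place $s$ in the trivial last slot.
\item[(d)] Every $h_l$ with $l\neq j$ is now a word in the two fixed entries $h_j$ and $s$, so it can be cleared. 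The resulting tuple $(1,\dots,h_j,\dots,1,s)$ lies in $\mathcal R_i$.
\end{enumerate}

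With this filled in, your proof is complete and self-contained: it reproves the connectivity statement the paper takes as a black box. It also shows exactly where each hypothesis is used:
\begin{enumerate}
\item[(1)] $k\ge d+1$ only to have one trivial slot;
\item[(2)] $k\ge 3$ for the spare slot in step 2;
\item[(3)] spread at least $2$ only to link two generating pairs.
\end{enumerate}
The paper's citation is shorter, but yours makes the mechanism transparent.
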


Theorem \ref{theroemA} provides a general mechanism for obtaining infinite girth from suitable families of finite quotients. We use it to answer Question \ref{pyberfinitesimplenonabelian} affirmatively. We first establish the result for alternating groups and then extend it to arbitrary non-abelian finite simple quotients. In fact, the argument also applies to finite quasisimple quotients. The extension to almost-simple quotients requires an additional hypothesis concerning Nielsen components and the probability that a random tuple generates the quotient.

We record these extensions in the following two corollaries. Here, $\gir^{=}_k(\Gamma)$ denotes the supremum of the girths over finite generating sets of $\Gamma$ of cardinality exactly $k$.

\begin{maincorollary}
Suppose that a finitely generated group $\Gamma$ admits infinitely many pairwise non-isomorphic finite quasisimple quotients. Then, for every $k\geq d(\Gamma)+1$,
\[
\gir^{=}_k(\Gamma)=\infty.
\]
In particular, $\gir(\Gamma)=\infty\).
\end{maincorollary}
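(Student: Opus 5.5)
The plan is to reduce to the case of non-abelian finite simple quotients and then verify hypotheses (i)--(iii) of Theorem~\ref{theroemA}. First note that $d(\Gamma)\ge 2$: a cyclic group has only abelian quotients, so it admits no quasisimple quotient. Hence every $k\ge d(\Gamma)+1$ satisfies $k\ge 3$, and the condition $k\ge\max\{d+1,3\}$ of Theorem~\ref{theroemA} holds automatically.

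\emph{Reduction to simple quotients.} Let $\pi_i'\colon\Gamma\twoheadrightarrow Q_i'$ be the given quasisimple quotients, pairwise non-isomorphic. Set $S_i=Q_i'/Z(Q_i')$; each $S_i$ is a non-abelian finite simple quotient of $\Gamma$. Every finite simple group has a Schur multiplier of bounded order, and so it has only finitely many isomorphism types of perfect central extensions, i.e.\ quasisimple covers. Therefore infinitely many pairwise non-isomorphic quasisimple groups cannot have only finitely many isomorphism types of central quotients $S_i$. Passing to a subsequence, we may assume the $S_i$ are pairwise non-isomorphic, so $|S_i|\to\infty$. We take $Q_i=S_i$ and let $\pi_i$ be the composite map $\Gamma\to S_i$. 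Working with $S_i$ rather than $Q_i'$ matters: spread at least $2$ fails for a quasisimple group with non-trivial centre, since a central element $z$ satisfies $\langle z,y\rangle$ abelian for every $y$.

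\emph{Checking the hypotheses.} For (i), every finite simple group is two-generated. By Breuer--Guralnick--Kantor, every non-abelian finite simple group has spread at least $2$, indeed uniform spread at least $2$. For (ii), we use the theorem of Dixon, Kantor--Lubotzky and Liebeck--Shalev: the probability that two random elements generate a finite simple group $S$ tends to $1$ as $|S|\to\infty$. For (iii), fix a non-trivial word $w\in F_k$. Since no non-trivial word is a law in all finite simple groups of sufficiently large order, the Larsen--Shalev bounds on word-map fibres apply: there are $\varepsilon_w>0$ and $C_w$ with
\[
|w^{-1}(1)|\le C_w\,|S|^{k-\varepsilon_w}
\]
for all finite simple $S$. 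Dividing by $|S|^k$ gives $\mathbb P(w(\mathbf q_i)=1)\to 0$. If a uniform fibre bound is not available in that form, I would split into alternating groups, Lie type groups of unbounded rank, and Lie type groups of bounded rank over growing fields. In the last case Lang--Weil together with the fact that $w$ is not an identity on the algebraic group gives the estimate. I expect (iii) to be the main obstacle, since it requires uniform control across all families of simple groups.

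\emph{Conclusion.} Theorem~\ref{theroemA} now yields, for every $R$, a generating $k$-tuple $\mathbf S_R$ with $\gir(\Gamma,\mathbf S_R)>R$. Taking $R\ge 2$ rules out relations of length at most $2$, namely $s=1$ and $s=t^{\pm1}$. So the entries of $\mathbf S_R$ are non-trivial and pairwise distinct, and not mutually inverse. Therefore $\mathbf S_R$ gives a generating set of cardinality exactly $k$ with girth greater than $R$. This shows $\gir^{=}_k(\Gamma)=\infty$, and hence $\gir(\Gamma)=\infty$.
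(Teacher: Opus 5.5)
Your proposal is correct and is essentially the paper's argument. You pass to the simple central quotients $Q_i/Z(Q_i)$ and use that a fixed non-abelian simple group has only finitely many quasisimple covers (all are quotients of its finite universal central extension) to extract infinitely many pairwise non-isomorphic simple quotients. You then feed these into Theorem~\ref{thm:redundant-nielsen-lifting} via Breuer--Guralnick--Kantor, Liebeck--Shalev and Larsen--Shalev, exactly as the paper does through Corollary~\ref{cor:general-simple-quotients}.

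The only wording to fix is ``Schur multiplier of bounded order''. The multipliers are not uniformly bounded over all simple groups (for $\mathrm{PSL}_n(q)$ the multiplier is generically cyclic of order $\gcd(n,q-1)$), but you only use finiteness for each fixed $S$, so the argument is unaffected.
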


\begin{maincorollary}
Let $\Gamma$ be finitely generated, with $d=d(\Gamma)$, and let $k\geq d+1$. Suppose that $\Gamma$ admits epimorphisms
\[
\pi_i\colon\Gamma\twoheadrightarrow Q_i
\]
onto finite almost-simple groups satisfying:
\begin{enumerate}
\item[\rm (i)] All redundant generating $k$-tuples of each $Q_i$ lie in a single Nielsen component.

\item[\rm (ii)] For some integer $r<k$,
\[
\liminf_{i\to\infty}
\mathbb P\bigl(\langle q_{i,1},\ldots,q_{i,r}\rangle=Q_i\bigr)>0,
\]
where the $q_{i,j}$ are independent uniform elements of $Q_i$.

\item[\rm (iii)] For every non-trivial word $w\in F_k$,
\[
\mathbb P_{\mathbf q\in Q_i^k}\bigl(w(\mathbf q)=1\bigr)
\longrightarrow0.
\]
\end{enumerate}
Then
\[
\gir^{=}_k(\Gamma)=\infty.
\]
\end{maincorollary}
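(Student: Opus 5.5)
The plan is to rerun the mechanism behind Theorem~\ref{theroemA}, replacing the "spread at least $2$ and two-generation" input by the two hypotheses (i) and (ii), which are exactly what that mechanism needs. Fix $R\ge1$ and let $W_R\subset F_k$ be the finite set of non-trivial reduced words of length at most $R$. The goal is a generating $k$-tuple $\mathbf S$ of $\Gamma$ with $w(\mathbf S)\neq1$ for every $w\in W_R$. It suffices to find an index $i$ and a generating $k$-tuple $\mathbf S$ of $\Gamma$ such that $w(\pi_i(\mathbf S))\neq 1$ in $Q_i$ for all $w\in W_R$, since a relation in $\Gamma$ maps to a relation in $Q_i$.

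\emph{Step 1: a good tuple in $Q_i$.} Let $\mathbf q$ be uniform in $Q_i^k$. By (iii) and a union bound over the finite set $W_R$, $\mathbb P(\exists w\in W_R: w(\mathbf q)=1)\to0$. By (ii), for infinitely many $i$ (passing to a subsequence, for all large $i$) the first $r$ coordinates of $\mathbf q$ generate $Q_i$ with probability at least some $c>0$. On that event $\mathbf q$ is a \emph{redundant} generating $k$-tuple, because $r<k$ lets us drop the last coordinate and still generate. Hence for $i$ large there is a redundant generating $k$-tuple $\mathbf q^{(i)}$ of $Q_i$ avoiding every $w\in W_R$.

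\emph{Step 2: lifting via Nielsen connectivity.} Take a generating $d$-tuple $(g_1,\dots,g_d)$ of $\Gamma$ and pad it to $\mathbf T=(g_1,\dots,g_d,1,\dots,1)\in\Gamma^k$, using $k\ge d+1$. Then $\pi_i(\mathbf T)$ is a generating $k$-tuple of $Q_i$ with a trivial entry, so it is redundant. By (i), $\pi_i(\mathbf T)$ and $\mathbf q^{(i)}$ lie in the same Nielsen component. Choose Nielsen moves $\nu_1,\dots,\nu_m$ with $\nu_m\cdots\nu_1(\pi_i(\mathbf T))=\mathbf q^{(i)}$. Nielsen moves are defined on $\Gamma^k$ by the same formulas, commute with $\pi_i$, and preserve the property of generating $\Gamma$. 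So $\mathbf S=\nu_m\cdots\nu_1(\mathbf T)$ generates $\Gamma$, has size exactly $k$, and satisfies $\pi_i(\mathbf S)=\mathbf q^{(i)}$. Then $\gir(\Gamma,\mathbf S)>R$, and since $R$ is arbitrary, $\gir^{=}_k(\Gamma)=\infty$.

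\emph{Expected obstacle.} The delicate point is the notion of Nielsen component. In (i) it must be the one in the product replacement graph, where moves are $x_j\mapsto x_jx_l^{\pm1}$ or $x_l^{\pm1}x_j$ and possibly swaps and inversions. It must match the moves used in the paper's earlier lifting argument, so that each move lifts verbatim to $\Gamma$. A second point is that entries of $\mathbf S$ might coincide or be trivial, which would make "girth of the generating set" degenerate. This is excluded because the one-letter words $x_j$ and $x_jx_l^{-1}$ ($j\neq l$) lie in $W_R$ when $R\ge2$, so the entries of $\mathbf q^{(i)}$, and hence of $\mathbf S$, are distinct and non-trivial. Finally, the uniformity needed in Step 1 is only a $\liminf$ statement, so one must work along a subsequence on which both the union bound from (iii) is below $c/2$ and the probability in (ii) exceeds $c$. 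Then both events hold simultaneously with probability at least $c/2>0$.
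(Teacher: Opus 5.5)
Your proposal is correct and follows the paper's proof essentially step by step. Like the paper, you stabilize a minimal generating tuple with trivial entries, use (ii) together with a union bound over (iii) to find a redundant generating $k$-tuple of $Q_i$ of marked girth $>R$, connect it to $\pi_i$ of the stabilized tuple via (i), and lift the Nielsen moves to $\Gamma$.
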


Our second main direction concerns Question \ref{compositionfactor}. We show that the B. H. Neumann group $\Gamma_0=\langle\pi,\tau\rangle$, constructed in \cite[Section 1]{MR2031168}, provides an affirmative answer. More strongly, its finite quotients can be chosen to be pairwise non-isomorphic non-abelian finite simple groups. In addition, the group has infinite girth despite containing no non-abelian free subgroup.

\begin{maintheorem}\label{BH}
The B. H. Neumann group $\Gamma_0=\langle\pi,\tau\rangle$ contains infinitely many distinct finite-index normal subgroups
\[
N_i\trianglelefteq\Gamma_0
\]
such that:
\begin{enumerate}
\item The quotients $\Gamma_0/N_i$ are pairwise non-isomorphic non-abelian finite simple groups.

\item $\gir(\Gamma_0)=\infty$.

\item $\Gamma_0$ contains no non-abelian free subgroup.
\end{enumerate}
In particular, the finite quotients of $\Gamma_0$ have infinitely many pairwise non-isomorphic non-abelian composition factors.
\end{maintheorem}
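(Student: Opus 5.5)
We need to recall the B. H. Neumann group from Pyber's construction. Fix an increasing sequence of odd integers $n_1<n_2<\cdots$ with $n_i\ge 5$. Let $\Omega=\bigsqcup_i \Omega_i$ with $|\Omega_i|=n_i$. Take $\pi$ to act on each block $\Omega_i$ as an $n_i$-cycle, and $\tau$ to act on each block as a $3$-cycle on the first three points. On each block the pair $(\pi,\tau)$ then generates $\Alt(n_i)$. The group $\Gamma_0$ embeds in $\prod_i \Alt(n_i)$, and projecting onto the $i$-th factor gives an epimorphism $\rho_i\colon\Gamma_0\twoheadrightarrow \Alt(n_i)$.

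Set $N_i=\ker\rho_i$. For $n_i\ge5$ these are finite-index normal subgroups with non-abelian simple quotients $\Alt(n_i)$. These quotients are pairwise non-isomorphic because their orders are distinct, so the $N_i$ are distinct. This proves (1), and the final sentence of the theorem follows immediately. Part (2) then follows from the first main corollary, whether stated for simple or quasisimple quotients: $\Gamma_0$ is two-generated and has infinitely many pairwise non-isomorphic simple quotients, so $\gir^{=}_k(\Gamma_0)=\infty$ for $k\ge3$, hence $\gir(\Gamma_0)=\infty$. If one prefers to use Theorem A directly, one must check its hypotheses for the alternating groups $\Alt(n_i)$:
\begin{enumerate}
\item[(i)] two-generation and spread $\ge2$, by Breuer--Guralnick--Kantor;
\item[(ii)] Dixon's theorem that random pairs generate $\Alt(n)$ with probability tending to $1$;
\item[(iii)] vanishing of the probability that a fixed non-trivial word in $k$ letters is trivial on random $k$-tuples of $\Alt(n)$, for example via the known bound on word maps on $S_n$, $\Alt(n)$ (cf.\ Larsen--Shalev word-map fibers, or the result that each non-trivial word has $w(\mathbf q)\ne1$ with probability $\to1$ in $\Alt(n)$).
\end{enumerate}

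The main obstacle is (3), the absence of free subgroups, since $\prod\Alt(n_i)$ itself contains free groups. The plan is to show that $\Gamma_0$ contains the normal subgroup $\bigoplus_i \Alt(n_i)$ of finitely supported elements, and that the quotient is abelian or virtually abelian. Concretely:
\begin{itemize}
\item The commutator $[\tau,\tau^{\pi^j}]$ is supported only on blocks with $n_i$ small relative to $j$, because for $n_i>j+3$ the two $3$-cycles have disjoint supports and commute.
\item This yields finitely supported non-trivial elements in $\Gamma_0$.
\item Normal closure together with simplicity of each factor then shows that $\Gamma_0$ contains every $\Alt(n_i)$ factor, so $\Gamma_0\supseteq\bigoplus_i\Alt(n_i)=:D$.
\item In $\Gamma_0/D$, the image of $\tau$ commutes with all of its $\pi$-conjugates, modulo finite support. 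So $\Gamma_0/D$ is a quotient of the lamplighter-like group generated by $\bar\pi$ and the commuting conjugates of $\bar\tau$, which is metabelian.
\end{itemize}

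Finally, $D$ is locally finite, so it contains no free subgroups. Free groups are not locally finite, and an extension of a locally finite group by a metabelian (hence elementary amenable) group is amenable. Therefore $\Gamma_0$ is amenable and contains no non-abelian free subgroup. The delicate point will be pinning down Pyber's exact choice of $\pi,\tau$ so that the commutator computation above is valid. If the cycles are arranged differently, I would adapt the support argument accordingly, still aiming for amenability.
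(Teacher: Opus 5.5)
Parts (1) and (2) are fine and follow the paper: the coordinate projections $\rho_i\colon\Gamma_0\twoheadrightarrow\Alt(n_i)$, then the redundant Nielsen-component lifting theorem via the simple-quotient corollary with $k\geq d(\Gamma_0)+1=3$. You have swapped the generator names relative to the paper (there $\pi$ is the $3$-cycle and $\tau$ the $n_i$-cycle); I use your naming below.

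The gap is in (3): your fourth bullet is false. In every block, $\tau^{\pi^{j}}$ is the $3$-cycle on $\{1+j,2+j,3+j\}$ (mod $n_i$). For $j=\pm1,\pm2$ this support meets $\{1,2,3\}$ however large $n_i$ is. For example, $(1\,2\,3)$ and $(2\,3\,4)$ do not commute; they generate $\Alt(4)$. So $[\tau,\tau^{\pi}]$ is non-trivial in \emph{every} coordinate and does not lie in $D$. Your first bullet has the same oversight: disjointness for $n_i>j+3$ only holds when $|j|\geq 3$. Consequently $\bar\tau$ does not commute with $\bar\tau^{\bar\pi^{\pm1}}$ or $\bar\tau^{\bar\pi^{\pm2}}$ in $\Gamma_0/D$. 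In fact $\Gamma_0/D\cong\operatorname{Alt}_{\mathrm{fin}}(\mathbb Z)\rtimes\mathbb Z$ via $\tau\mapsto(0\,1\,2)$, $\pi\mapsto t$. The reason is that a word in the generators is trivial in all but finitely many blocks if and only if it is trivial on $\mathbb Z$, since long cycles look locally like $\mathbb Z$. This group contains $\Alt(m)$ for every $m$ and satisfies no non-trivial law, so it is not solvable, let alone metabelian. Your amenability conclusion therefore does not follow from what you wrote.

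The strategy is easily repaired, and you do not even need $D$. Dropping $D$ also removes the unstated step in your third bullet: obtaining a full factor $\Alt(n_i)$ from a finitely supported element uses that the $\Alt(n_i)$ are pairwise non-isomorphic, so that $\Gamma_0$ maps onto finite products of them; simplicity alone is not enough. Instead, let $M=\langle\tau^{\pi^j}:j\in\mathbb Z\rangle$. It contains $\tau$ and is normalized by $\pi$, so $M\trianglelefteq\Gamma_0$ and $\Gamma_0/M$ is cyclic. Any finitely generated subgroup of $M$ lies in some $M_J=\langle\tau^{\pi^j}:|j|\leq J\rangle$. In every block with $n_i\geq 2J+3$, the generators of $M_J$ act by the same $3$-cycles on the window $\{1-J,\ldots,3+J\}$ and trivially elsewhere. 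Hence $M_J$ embeds in the finite group $\prod_{n_i<2J+3}\Alt(n_i)\times\Alt(2J+3)$. Thus $M$ is locally finite, and $\Gamma_0$ is locally-finite-by-cyclic, hence elementary amenable, so it contains no non-abelian free subgroup. The paper does not prove (3) at all; it simply cites Pyber's result that $\Gamma_0$ is elementary amenable. The argument above gives a self-contained replacement.
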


We next consider the finitary alternating group
\[
\Alt_{\mathrm{fin}}(\mathbb{Z)}\rtimes\mathbb{Z}.
\]
This group provides another example of a finitely generated group of infinite girth that contains no non-abelian free subgroup and satisfies no non-trivial group law. Its infinite girth, however, requires a different argument from the Nielsen-component lifting method. The key ingredient is an explicit construction, for each $N$, of a two-element generating set $\{\sigma_N,t\}$ whose Cayley graph has girth exceeding $N$. Although the fact that this group satisfies no non-trivial law was already established in \cite[Proposition 2.4]{akhmedov2003girth}, its girth had not previously been determined. We establish the following result.

\begin{maincorollary}\label{pybergroup}
We have
\[
\gir\bigl(
\Alt_{\mathrm{fin}}(\mathbb{Z})\rtimes\mathbb{Z}
\bigr)=\infty.
\]
Moreover, the B. H. Neumann group $\Gamma_0$ admits an epimorphism onto $\Alt_{\mathrm{fin}}(\mathbb Z)\rtimes\mathbb{Z}$.
\end{maincorollary}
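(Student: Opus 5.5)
The plan for the first assertion is to build, for each $N$, an explicit pair $\{\sigma_N,t\}$ with $t\colon n\mapsto n+1$ the shift, and to reduce the girth estimate to a finite-permutation statement. Let $w\in F_2=F(x,y)$ be a non-trivial reduced word of length at most $N$, evaluated at $x=\sigma_N$, $y=t$. If the exponent sum of $y$ in $w$ is non-zero, then $w(\sigma_N,t)$ maps to a non-zero element of $\mathbb Z$, so it is non-trivial. Otherwise $w$ lies in the kernel of the exponent-sum map. That kernel is free on $x_a=y^axy^{-a}$ ($a\in\mathbb Z$), and $w$ becomes a non-trivial word of length at most $N$ in the letters $x_a$ with $|a|\le N$. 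So it suffices to choose $\sigma$ such that the permutations $t^a\sigma t^{-a}$, $|a|\le N$, satisfy no non-trivial relation of length at most $N$.

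To arrange this, put $D=2N+1$. Since $F_D$ is residually finite, there is a finite set $\Omega=\{0,\dots,m-1\}$ and permutations $g_{-N},\dots,g_N\in\operatorname{Sym}(\Omega)$ such that $F_D\to\operatorname{Sym}(\Omega)$, $x_a\mapsto g_a$, is injective on the ball of radius $N$. Replacing $\Omega$ by $\Omega\sqcup\Omega$ with the diagonal action makes every $g_a$ even. Define the near part $\sigma_0$ residue-class-wise modulo $D$: on the points $\{iD-a: i<m\}$ it acts by $iD-a\mapsto g_a(i)D-a$. Then $\sigma_0$ preserves each residue class, and $t^a\sigma_0t^{-a}$ acts on the class $\{iD:i<m\}$ exactly as $g_a$ (up to fixing the sign convention of the shift). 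Restricting a word in the $x_a$ to this invariant class therefore gives a homomorphism to $\operatorname{Sym}(\Omega)$ that realizes $x_a\mapsto g_a$, and hence no short relation holds.

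For generation, set $\sigma_N=\sigma_0\cdot c$, where $c=(L,L+1,\dots,L+p-1)$ is a consecutive $p$-cycle, $p$ is a prime larger than the order of $\sigma_0$, and $L$ is far beyond $\supp\sigma_0$ plus $N\cdot D$. Shifts by at most $N$ never bring $c$ near the test class, so the injectivity argument is unaffected. The power $\sigma_N^{\,\mathrm{ord}(\sigma_0)}$ is a non-trivial power of $c$, hence a consecutive $p$-cycle. Commutators of $t$-translates of such a cycle yield a consecutive $3$-cycle, and the $t$-conjugates of that $3$-cycle generate $\Alt_{\mathrm{fin}}(\mathbb Z)$. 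Therefore $\langle\sigma_N,t\rangle$ is the whole group and has girth exceeding $N$.

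For the epimorphism, use Pyber's description of $\Gamma_0$. It acts on a disjoint union of blocks of odd sizes $n_i\to\infty$, with $\pi$ an $n_i$-cycle on each block and $\tau$ a $3$-cycle on three consecutive points of each block. Define $\phi(\pi)=t$ and $\phi(\tau)=(0\,1\,2)$. To show $\phi$ is well defined, take a relation $w(\pi,\tau)=1$ in $\Gamma_0$ with $|w|=\ell$ and choose a block with $n_i>4\ell$. On that block, the action near the $3$-cycle coincides with the action of $\langle t,(0\,1\,2)\rangle$ on the window $[-2\ell,2\ell]\subset\mathbb Z$. Far from the $3$-cycle, $w$ acts as rotation by its $\pi$-exponent sum, which is less than $n_i$ in absolute value, so that exponent sum must vanish. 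Hence $w(t,(0\,1\,2))$ fixes every point of $\mathbb Z$ and is trivial. Surjectivity is the generation statement used above, since $t$ and a consecutive $3$-cycle generate $\Alt_{\mathrm{fin}}(\mathbb Z)\rtimes\mathbb Z$.

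The main obstacle I expect is the bookkeeping that makes the residue-class encoding exact. I must check that the $t^a$-conjugates act on one fixed class precisely as the chosen $g_a$, and that the far cycle never interferes with words of length at most $N$. The generation step, extracting a $3$-cycle from powers and commutators, is routine.
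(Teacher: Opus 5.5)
Your proposal is correct, but for the girth assertion it takes a genuinely different route from the paper. The paper (Theorem \ref{AltZsemiZ}) encodes the radius-$N$ ball $B_N\subset F_2$ directly into $\mathbb Z$. Left multiplication by $x$ becomes the shift $t$ along the $\langle x\rangle$-orbits of $B_N$, which are intervals. The partial $y$-action is extended via Lemma \ref{singleextension} to a single odd-length cycle $\sigma_N$ with $\sigma_N(D)=0$, $\sigma_N(0)=1$ and $\supp\sigma_N\subseteq[0,D]$. Every short word is then detected at one point, $w(t,\sigma_N)\beta(1)=\beta(w)\neq\beta(1)$, with no need to separate exponent sums. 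Generation is built into the same cycle, since $[\sigma_N,t^D\sigma_Nt^{-D}]=(D\;0\;D+1)$ and its $t$-translates act transitively.

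You instead split off the exponent-sum map and pass to the Reidemeister--Schreier basis $y^axy^{-a}$ of its kernel. You then realize an arbitrary finite permutation model of $F_{2N+1}$ on residue classes modulo $2N+1$, and decouple generation from girth by a distant prime-length cycle recovered as a power. The paper's construction is fully explicit and self-contained, and it yields a single cycle. Yours is more modular: any finite model of a free group can be plugged in, and the generation trick is independent of the girth construction. The cost is a modest one, namely invoking residual finiteness of free groups.

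For the epimorphism, the paper only asserts it in Remark \ref{remarksurjection}; your block-versus-window comparison actually proves it. Note that your names are swapped relative to the paper, where $\pi$ is the $3$-cycle and $\tau$ the $n_i$-cycle, though your map is the same one.

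A few small points to tidy:
\begin{enumerate}
\item The power $\sigma_N^{\operatorname{ord}(\sigma_0)}=c^{\operatorname{ord}(\sigma_0)}$ is a $p$-cycle on $[L,L+p-1]$ but not the consecutive one. The cycle $c$ itself is a further power, because $\gcd(p,\operatorname{ord}(\sigma_0))=1$.
\item The commutator of $c$ with $t^{p-1}ct^{-(p-1)}$ is a $3$-cycle on $\{L,\,L+p-1,\,L+p\}$, not a consecutive $3$-cycle. Its $t$-translates still act transitively on $\mathbb Z$, so Lemma \ref{G=AltX} gives $\Alt_{\mathrm{fin}}(\mathbb Z)$.
\item In the epimorphism argument, choose $n_i>4\ell+2$ rather than $n_i>4\ell$. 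This ensures that no point of a trajectory in $[-4\ell,4\ell]$ is congruent to $0,1,2$ modulo $n_i$ without being one of them.
\end{enumerate}
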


In fact, the quotient relation gives a broader consequence: every finitely generated group admitting an epimorphism onto $\Alt_{\mathrm{fin}}(\mathbb Z)\rtimes\mathbb{Z}$ has infinite girth. This follows from the quotient principle of Akhmedov \cite[Proposition 1.1]{akhmedov2005girth}. Notice that the conclusion is substantially stronger than the absence of non-trivial laws. Indeed, although $\Gamma_0$ has infinite girth by Theorem \ref{BH}, the existence of groups satisfying no non-trivial law but having finite girth shows that the latter property alone is insufficient. The explicit generating sets constructed in the proof are therefore essential.

We also apply Theorem \ref{theroemA} to the BouRabee--Seward group $B_f$. Besides establishing infinite girth, we obtain information about the finite simple sections occurring inside a finite-index subgroup.

\begin{maintheorem}
The BouRabee--Seward group $B_f$ contains a proper finite-index normal subgroup $H\trianglelefteq B_f$ satisfying:
\begin{enumerate}
\item $\gir(B_f)=\infty$;

\item There exist finite subgroups $1\neq N_i\trianglelefteq K_i<H$ such that
\[
K_i/N_i\cong\Alt(n_i),
\]
where the groups $\Alt(n_i)$ are pairwise non-isomorphic non-abelian finite simple groups.
\end{enumerate}
Moreover, each $\Alt(n_i)$ occurs as a proper subgroup of $H$.
\end{maintheorem}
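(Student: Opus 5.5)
The plan is to obtain both conclusions from a concrete description of $B_f$ as a group of permutations, following the Bou-Rabee--Seward construction. There, $B_f$ is generated by finitely many permutations of a countable set $X$ partitioned into finite blocks $X_n$ of sizes $n_i$ (odd, increasing). On each block the generators act so that the image of $B_f$ in $\operatorname{Sym}(X_{n_i})$ contains $\Alt(n_i)$, and the kernel of the diagonal action is controlled. Restriction to the block $X_{n_i}$ gives epimorphisms $\rho_i\colon B_f\to Q_i$ with $\Alt(n_i)\le Q_i\le\operatorname{Sym}(n_i)$.

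First I take $H$ to be the finite-index normal subgroup of elements acting by even permutations on every block. Concretely, $H=\bigcap_i\rho_i^{-1}(\Alt(n_i))$. By the construction, the sign data is governed by only finitely many generator parities, so $H$ has index at most $2^{d(B_f)}$, and it is proper because some generator acts oddly. Hence each $\rho_i$ restricts to an epimorphism $H\twoheadrightarrow\Alt(n_i)$.

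For the girth statement (1), I apply Theorem~\ref{theroemA} to $\Gamma=B_f$, taking the quotients $\Alt(n_i)$ (via $H$ when $Q_i=\operatorname{Sym}(n_i)$). The cleanest route is to pass to the subsequence on which $\rho_i(B_f)=\Alt(n_i)$, which I will try to ensure by choosing the $n_i$ so that the generators are even on those blocks. I then check the three hypotheses.
\begin{enumerate}
\item[(i)] Alternating groups are two-generated with spread $\ge2$, by Breuer--Guralnick--Kantor.
\item[(ii)] By Dixon's theorem, two random elements of $\Alt(n)$ generate it with probability tending to $1$.
\item[(iii)] For a fixed nontrivial $w\in F_k$, the probability that $w(\mathbf q)=1$ tends to $0$ in $\Alt(n)$ as $n\to\infty$. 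This follows from the known results on word maps in symmetric and alternating groups (e.g. Nikolov--Pyber and Larsen--Shalev).
\end{enumerate}
Theorem~\ref{theroemA} then gives generating $k$-tuples of arbitrarily large girth. If only $\operatorname{Sym}$ quotients are available, I will instead argue via $H$ and use that girth passes to a finite-index overgroup in Akhmedov's sense. This fallback is less clean, so I would rather secure the even-block subsequence.

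For (2), I use that $B_f$ contains the finitary alternating permutations supported on the blocks. Set $K_i=\Alt(X_{n_i})\times\Alt(X_{n_j})$ for some fixed auxiliary block $j\ne i$, which lies in $H$, and $N_i=\Alt(X_{n_j})\neq1$. Then $K_i/N_i\cong\Alt(n_i)$. Moreover, $\Alt(X_{n_i})$ itself is a subgroup of $H$, proper because $H$ is infinite.

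The main obstacle is verifying from the construction that $B_f$ genuinely contains the block alternating groups. This should follow by taking commutators of generators whose supports, by the growth condition on $f$, eventually isolate a single block. I expect this to require using the specific defining properties of $f$ in Bou-Rabee--Seward.
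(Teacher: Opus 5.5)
There is one genuine gap: your choice of $H$ fails. In the Bou-Rabee--Seward construction every generator is already even on every block. The $k$-th coordinate of $s$ is $\alpha_{d(k)}^{q(k)}$, a power of a $d(k)$-cycle with $d(k)$ odd, and the $k$-th coordinate of $t$ is the $3$-cycle $\beta$. So $B_f\le\prod_k\Alt(d(k))$ by definition. Hence $\bigcap_i\rho_i^{-1}(\Alt(n_i))$ is all of $B_f$, no generator ``acts oddly'', and your $H$ is not proper. The first assertion of the statement, that $H$ is a \emph{proper} finite-index normal subgroup, is therefore not established.

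The same fact makes your hedging in (1) unnecessary. Every block quotient is $\Alt(d(k))$, so the redundant Nielsen-component lifting theorem applies directly to the coordinate projections, exactly as in the paper. Your fallback would not be justified anyway: Akhmedov's principle concerns quotients, not finite-index overgroups.

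The fix is the paper's. Once you know the projections $\pi_m\colon B_f\twoheadrightarrow\Alt(d(m))$ are onto and the coordinate subgroups $G_m$ lie in $B_f$, take $H=\ker\pi_{m_1}$ for a single block $m_1$. This is a proper normal subgroup of finite index, and it contains every $G_m$ with $m\neq m_1$. Your $K_i=G_m\times G_{m'}$ and $N_i=G_{m'}$, with $m,m'\neq m_1$, then work verbatim.

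On the step you flag as the main obstacle, $G_m\le B_f$ (the paper simply cites this from Bou-Rabee--Seward): what isolates a block is not the growth of $f$ but the arithmetic conditions on $p,q,d$. Consider the commutator $[s^{p(m)}ts^{-p(m)},t]$.
\begin{itemize}
\item It is non-trivial in coordinate $m$, because $p(m)q(m)=d(m)-2\equiv-2 \pmod{d(m)}$.
\item It is trivial in coordinates $i<m$, by the defining condition $p(m)q(i)\not\equiv\pm1,\pm2 \pmod{d(i)}$.
\item It is trivial in coordinates $i>m$, since $3\le p(m)q(i)<d(i)-2$.
\end{itemize}
So this commutator is a non-trivial element of $G_m$. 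Its normal closure in $B_f$ is then all of $G_m$, by simplicity of $\Alt(d(m))$ together with surjectivity of $\pi_m$. Surjectivity holds because $\gcd(q(m),d(m))=1$ gives $\langle\alpha_{d(m)}^{q(m)},\beta\rangle=\langle\alpha_{d(m)},\beta\rangle=\Alt(d(m))$.
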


Finally, motivated by the lifting results above, we investigate how the girth of a residually finite group is reflected in its finite quotients. We introduce a marked version of the finite-quotient criterion, in which the generating set is retained when passing to the quotient. This leads to the following characterization, which is an elementary consequence of residual finiteness and the characterization of infinite girth through asymptotic freeness due to Arzhantseva \cite{arzhantseva2014asymptotic} [Arz14, Definition 3.1 and Theorem 3.3].

\begin{maintheorem}\label{markedfinitequotient1}[Marked finite-quotient criterion]
Let $\Gamma$ be a finitely generated residually finite group. The following are equivalent:
\begin{enumerate}
\item $\gir(\Gamma)=\infty$.

\item For every $N\geq1$, there exist a finite generating set $S_N$ of $\Gamma$ and a finite quotient
\[
\pi_N\colon\Gamma\twoheadrightarrow Q_N
\]
such that
\[
\gir(Q_N,\pi_N(S_N))>N.
\]
\end{enumerate}
\end{maintheorem}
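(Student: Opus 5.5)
The plan is to prove both implications directly, using only the definition of girth for Cayley graphs and residual finiteness. Throughout, for a finite generating set \(S\) of a group \(G\), \(\gir(G,S)\) is the length of a shortest non-trivial cycle in \(\operatorname{Cay}(G,S)\). Equivalently, it is the least length of a non-trivial reduced relation among the elements of \(S\), with the usual conventions for degenerate relations such as \(s=1\), \(s=s'\), or \(s^2=1\) when \(s\) is an involution. We will use the standard fact that a cycle of length \(\ell\) in \(\operatorname{Cay}(G,S)\) corresponds to a freely reduced word \(w\) of length \(\ell\) in the alphabet \(S^{\pm1}\) (or in letters indexed by \(S\)) with \(w(S)=1\) in \(G\), and conversely. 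Cycles are preserved by homomorphisms once we check that the labelling remains a valid reduced path.

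For \((2)\Rightarrow(1)\), let \(N\ge1\) and take \(S_N\) and \(\pi_N\colon\Gamma\twoheadrightarrow Q_N\) as in (2). Suppose \(\operatorname{Cay}(\Gamma,S_N)\) contains a non-trivial cycle of length \(\ell\le N\), that is, a non-trivial reduced word \(w\) in the letters of \(S_N\) with \(w(S_N)=1\). Applying \(\pi_N\) gives \(w(\pi_N(S_N))=1\) in \(Q_N\), and this yields a cycle of length \(\le \ell\) in \(\operatorname{Cay}(Q_N,\pi_N(S_N))\), contradicting \(\gir(Q_N,\pi_N(S_N))>N\). The one point needing care is that \(\pi_N\) might identify distinct generators, or send a generator to \(1\) or to an involution. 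In each such case we still get a short cycle (length \(\le2\)) in the quotient, so the hypothesis excludes it. Hence \(\pi_N\) is injective on \(S_N\cup S_N^{-1}\cup\{1\}\), labels pass through faithfully, and \(\gir(\Gamma,S_N)>N\). Since \(N\) is arbitrary, \(\gir(\Gamma)=\infty\). It is cleanest to treat generating sets as marked tuples and interpret girth via reduced words in the free group on the marking, which makes this step formal. The reference to Arzhantseva's asymptotic-freeness characterization is an alternative route but is not needed.

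For \((1)\Rightarrow(2)\), fix \(N\) and choose a finite generating set \(S_N\) with \(\gir(\Gamma,S_N)>N\). Let \(W_N\) be the finite set of non-trivial reduced words of length \(\le N\) in the letters of \(S_N\), together with the finitely many elements \(s\), \(s^{-1}s'\) (\(s\neq s'\)) and \(s^2\) (for non-involutions) governing degenerate short cycles. By the girth assumption, each \(w\in W_N\) evaluates to a non-trivial element \(g_w\in\Gamma\). Residual finiteness gives, for each \(w\), a finite-index normal subgroup \(M_w\) with \(g_w\notin M_w\). Set \(M=\bigcap_{w\in W_N}M_w\), which is a finite intersection and therefore of finite index and normal, and let \(\pi_N\colon\Gamma\to Q_N=\Gamma/M\). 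Then no element of \(W_N\) dies in \(Q_N\), so \(\pi_N\) is injective on the marking and introduces no new relation of length \(\le N\). Thus \(\gir(Q_N,\pi_N(S_N))>N\).

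The main obstacle is bookkeeping rather than depth: one must fix a convention for Cayley graph girth, including loops, multiple edges and involutions. The task is then to verify that \enquote{cycle of length \(\le N\)} is equivalent to \enquote{some element of the finite set \(W_N\) is trivial}, uniformly for \(\Gamma\) and its quotients. With that dictionary established, both directions reduce to the two short arguments above.
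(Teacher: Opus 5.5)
Your proposal is correct and follows the paper's proof in both directions. For $(2)\Rightarrow(1)$ you push relations of $(\Gamma,S_N)$ down to $(Q_N,\pi_N(S_N))$, as in Theorem~\ref{quoitientfinitenessgirth}; for $(1)\Rightarrow(2)$ you separate the finitely many short non-trivial words using residual finiteness and combine the resulting finite quotients. Your version, with $Q_N=\Gamma/\bigcap_w M_w$ and girth read through reduced words on the marked tuple, is actually the precise form of the paper's argument: the paper takes the full product $\prod_w Q_w$ rather than the image of $\Phi$, and leaves the marking implicit. The marking matters, as you suspected: with unmarked sets, $(m,n)\mapsto m+n$ from $\mathbb{Z}^2$ onto $\mathbb{Z}/p$ collapses the standard basis to $\{1\}$ and breaks $(2)\Rightarrow(1)$.
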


We conclude by strengthening this criterion for finitely generated linear groups over fields of characteristic zero. In this setting, the finite quotients realizing large marked girth can be taken to be congruence quotients. Thus, the girth information can be detected through suitable reductions modulo maximal ideals.

\begin{maintheorem}[Congruence realization of marked girth]
Let $\Gamma\leq\operatorname{GL}_d(K)$ be a finitely generated linear group over a field $K$ of characteristic zero. Suppose that $\gir(\Gamma)=\infty$. Then, for every $N\in\mathbb N$, there exist a finite generating set $S_N$ of $\Gamma$, a finitely generated integral domain $A\subset K$ with
\[
\Gamma\leq\operatorname{GL}_d(A),
\]
and a maximal ideal $\mathfrak m_N$ of $A$ such that the congruence quotient
\[
\psi_{\mathfrak m_N}(\Gamma)
\leq\operatorname{GL}_d(A/\mathfrak m_N)
\]
satisfies
\[
\gir\bigl(
\psi_{\mathfrak m_N}(\Gamma),
\psi_{\mathfrak m_N}(S_N)
\bigr)>N.
\]
\end{maintheorem}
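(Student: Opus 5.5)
Since $\gir(\Gamma)=\infty$, for every $N$ we fix a finite generating set $S_N=\{s_1,\dots,s_m\}$ of $\Gamma$ with $\gir(\Gamma,S_N)>N$. Consider the finite set $W_N$ of non-trivial reduced words of length at most $N$ in the letters $S_N^{\pm1}$, excluding the formal words that are trivial because of coincidences or inverse pairs inside $S_N$ (e.g. $s_is_j^{-1}$ with $s_i=s_j$). By the girth hypothesis, every remaining $w\in W_N$ satisfies $g_w:=w(s_1,\dots,s_m)\neq1$ in $\Gamma$; this is exactly the reduction to finitely many non-trivial elements used in Theorem~\ref{markedfinitequotient1}. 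So it suffices to produce a congruence quotient in which each of the finitely many elements $g_w$ survives. Any such quotient has marked girth $>N$ with respect to $\psi(S_N)$. We also need $\psi$ to be injective on $S_N\cup S_N^{-1}\cup\{1\}$ so that no new coincidences among generators appear, and we handle this by adding the finitely many elements $s_is_j^{\mp1}$ (with $s_i\neq s_j^{\pm1}$) and $s_i$ (with $s_i\neq1$) to the list of elements to be kept non-trivial.

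Next we choose $A$. Let $A$ be the $\mathbb Z$-subalgebra of $K$ generated by the matrix entries of a finite generating set of $\Gamma$, their inverses' entries, and the inverses of their determinants. Then $A$ is a finitely generated integral domain and $\Gamma\le\operatorname{GL}_d(A)$. We may take $A$ independent of $N$, since $\Gamma$ is finitely generated and every $S_N$ lies in $\Gamma\le\operatorname{GL}_d(A)$. For each non-trivial $g$ in our finite list $F_N$, the matrix $g-I$ has some non-zero entry $a_g\in A$. Put $a=\prod_{g\in F_N}a_g$, which is non-zero because $A$ is a domain.

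The key algebraic input is that a finitely generated domain is Jacobson and its maximal ideals have finite residue fields. We use this as follows. Since $a\neq0$, the localization $A[1/a]$ is a non-zero finitely generated $\mathbb Z$-algebra, so it has a maximal ideal $\mathfrak n$. Its contraction $\mathfrak m_N=\mathfrak n\cap A$ is maximal in $A$: the residue field $A[1/a]/\mathfrak n$ is a finitely generated ring that is a field, hence finite by the Nullstellensatz over $\mathbb Z$, and $A/\mathfrak m_N$ embeds in it, so $A/\mathfrak m_N$ is a finite domain and hence a field. By construction $a\notin\mathfrak m_N$, so each $a_g\notin\mathfrak m_N$. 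Therefore $\psi_{\mathfrak m_N}(g)\neq I$ for all $g\in F_N$, and $\psi_{\mathfrak m_N}(\Gamma)$ is a finite group. By the first paragraph, $\gir(\psi_{\mathfrak m_N}(\Gamma),\psi_{\mathfrak m_N}(S_N))>N$.

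The main obstacle is not the commutative algebra, which is standard Malcev-type residual finiteness, but making the marked girth bookkeeping precise. We must make sure that the girth of the image Cayley graph is controlled only by words of length $\le N$ that are non-trivial in $\Gamma$, including degenerate short cycles coming from collisions among the images of generators. Once the finite list $F_N$ is set up correctly, as in the proof of Theorem~\ref{markedfinitequotient1}, the rest is the avoidance argument above. Characteristic zero is not really needed for this argument, but it matches the hypothesis of the statement.
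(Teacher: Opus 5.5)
Your proposal is correct and follows the paper's proof essentially step for step. You fix $S_N$ with $\gir(\Gamma,S_N)>N$ and pick a non-zero entry $a_w$ of $w(S_N)-I$ for each of the finitely many short reduced words. You then reduce modulo a maximal ideal of a finitely generated domain $A\subset K$ containing all matrix entries, chosen to avoid $\prod_w a_w$ and to have finite residue field.

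The differences are cosmetic. You take $A$ independent of $N$, and you obtain $\mathfrak m_N$ by contracting a maximal ideal of $A[1/a]$ via the Nullstellensatz over $\mathbb Z$; this is just the standard proof of the Jacobson property that the paper quotes from Bourbaki. Your extra collision bookkeeping is harmless but unnecessary in the paper's tuple convention: there only reduced words of length at most $N$ in $F_k$ matter, and they are all already non-trivial in $\Gamma$.
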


Taken together, these results provide several complementary approaches to infinite girth. They show how probabilistic properties of finite quotients, Nielsen connectivity, explicit generating-set constructions, and congruence reductions can be used to establish large girth in groups for which traditional ping-pong methods are unavailable. In particular, they demonstrate that the finite quotient structure of a group can encode substantially more information about its Cayley graphs than might be apparent from the existence of large-girth quotients alone.

\section{An Affirmative Answer to Question \ref{pyberfinitesimplenonabelian}}\label{Nielsencomponent}
\noindent We say a group $\Gamma$ is \textit{residually finite} if for any non-identity element $g\in \Gamma$, there exists a finite group $F$ and a homomorphism $\phi: \Gamma \to F$ such that $\phi(g)\neq e$.

Even before investigating Question \ref{pyberfinitesimplenonabelian}, the first question that comes to mind is dropping the "non-abelian finite simple" assumption :
\begin{question}\label{pyberresidualfiniteness1}
    If a residually finite group $\Gamma$ possesses finite quotients with arbitrarily large girth, does it follow that $\Gamma$ itself has infinite girth?
\end{question}

The Question \ref{pyberresidualfiniteness1} as stated is easily seen to be false in the abelian setting. For example, $\mathbb{Z}^2$ reduction$\mod{n}$, $\mathbb{Z}^2\to\mathbb{Z}/n\mathbb{Z}$ is a counterexample, and $\gir(\mathbb{Z}^2)=4$. In Example \ref{counterexample}, we utilize $\mathbb{Z}^2$ to construct a separating family of finite quotients with increasing girth, despite the fact that $\gir(\mathbb{Z}^2) = 4$.
\begin{definition}
    A family of group homomorphisms $T_k:G\to H$ is a separating family if \[\bigcap_{k=1}^{\infty}\ker(T_k)=e.\]
\end{definition}

\noindent In fact, the following example produces a separating family consisting entirely of cyclic quotients of growing order in $\mathbb{Z}^2$.

\begin{example}\label{counterexample} 
We show that $\mathbb{Z}^2$ possesses a separating family of finite quotients $\{Q_n\}_{n\in\mathbb{N}}$ such that $\lim_{n \to \infty} \gir(Q_n) = \infty$, although $\gir(\mathbb{Z}^2) < \infty$.
    
Let $\{p_k\}_{k \in \mathbb{N}}$ be a sequence of primes such that $p_k > k^2$ for all $k$. For each $k$, define the homomorphism $\phi_k \colon \mathbb{Z}^2 \to \mathbb{Z}/p_k\mathbb{Z}$ by
    \begin{equation}\label{equationdefinition}
        \phi_k(m, n) = (m + kn) \pmod{p_k}.
    \end{equation}
    Each $\phi_k$ is surjective, as $\phi_k(1, 0) = 1$. To show that the family $\{\mathbb{Z}/p_k\mathbb{Z}\}_{k\in\mathbb{N}}$ is separating, suppose $(m, n) \in \bigcap_{k=1}^{\infty} \ker(\phi_k)$. Then $p_k$ must divide $m + kn$ for all $k \geq 1$. Since $p_k > k^2$, the asymptotic behavior of the linear function $g(k) = m + kn$ relative to the quadratic growth of $p_k$ implies that
    \[ \lim_{k \to \infty} \frac{|m + kn|}{p_k} = 0. \]
    Thus, there exists $K \in \mathbb{N}$ such that for all $k \geq K$, we have $|m + kn| < p_k$. Given that $p_k$ divides $m + kn$, it must be that $m + kn = 0$ for all $k \geq K$. This forces $m = n = 0$, proving that $\bigcap_{k=1}^{\infty} \ker(\phi_k) = \{(0,0)\}$.

    Regarding the girth, the quotients $Q_k = \mathbb{Z}/p_k\mathbb{Z}$ are cyclic groups of order $p_k$. Since they satisfy the law $x^{p_k} = 1$, they possess finite girth; specifically, $\gir(\mathbb{Z}/p_k\mathbb{Z}) = p_k$ (attained by the standard generating set). It follows that $\lim_{k \to \infty} \gir(Q_k) = \infty$.
    
    On the other hand, $\gir(\mathbb{Z}^2) = 4$. Since any generating set $S$ of $\mathbb{Z}^2$ consists of commuting elements, any two generators $s_1, s_2 \in S$ satisfy the relation $s_1 s_2 s_1^{-1} s_2^{-1} = 1$, which is a word of length 4. Thus, $\gir(\mathbb{Z}^2) \leq 4$, and equality is realized by the standard basis $S = \{(1,0), (0,1)\}$.
    
\end{example}

\begin{example}\label{nonabelianKleinbottle}
    In the non-abelian setting, the answer to Question \ref{pyberresidualfiniteness1} also remains negative. The fundamental group of the Klein bottle, 
    \[ \pi_1(\mathbb{K}) = \langle a, b \mid aba^{-1} = b^{-1} \rangle, \]
    provides a counterexample. $\pi_1(\mathbb{K})$ is torsion-free (as both $a$ and $b$ have infinite order) and virtually abelian, containing the subgroup $\langle a^2, b \rangle \cong \mathbb{Z}^2$ of index $2$. Note that $\pi_1(\mathbb{K})$ can be realized as the Baumslag--Solitar group $BS(1, -1)$, which is known to be residually finite \cite[Theorem C]{MR285589}. Since $\pi_1(\mathbb{K})$ is virtually abelian, it follows that $\operatorname{girth}(\pi_1(\mathbb{K})) < \infty$ \cite[Lemma 3.1]{Schleimer2000}. 

    However, one can easily construct a family of finite quotients of $\pi_1(\mathbb{K})$ with arbitrarily large girth. For each $n \in \mathbb{N}$, define the homomorphism
    \[ \phi_n : \pi_1(\mathbb{K}) \to \mathbb{Z}/n\mathbb{Z} = \langle c \mid c^n = 1 \rangle \]
    by setting $\phi_n(a) = c$ and $\phi_n(b) = 1$. Since $\phi_n(aba^{-1}) = c \cdot 1 \cdot c^{-1} = 1 = \phi_n(b^{-1})$, the map $\phi_n$ is well-defined; surjectivity follows immediately. Moreover, $\operatorname{girth}(\mathbb{Z}/n\mathbb{Z}, \{c\}) = n$, which implies that $\operatorname{girth}(\mathbb{Z}/n\mathbb{Z}) \to \infty$ as $n \to \infty$.
\end{example}

\begin{remark}
    The failure of the implication in Example \ref{counterexample} and \ref{nonabelianKleinbottle} arises because a generating set $S_k$ realizing large girth in a quotient $Q_k$ does not necessarily lift to a generating set of $\Gamma$ that preserves this girth.
\end{remark}

We now show that the group as in Question \ref{pyberfinitesimplenonabelian} do not satisfy a law.

\begin{proposition}\label{nolawforfinitesimple}
    Let $\Gamma$ be a finitely generated group. If $\Gamma$ admits infinitely many pairwise non-isomorphic, non-abelian finite simple quotients $\{Q_n\}_{n \in \mathbb{N}}$, then $\Gamma$ satisfies no non-trivial group law.
\end{proposition}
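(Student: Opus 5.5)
We argue by contradiction. Suppose $\Gamma$ satisfies a non-trivial law $w\in F_m$, so $w(g_1,\ldots,g_m)=1$ for all $g_j\in\Gamma$. A law passes to quotients: if $\pi_n\colon\Gamma\twoheadrightarrow Q_n$ and $q_j\in Q_n$, choose preimages $g_j$; then $w(q_1,\ldots,q_m)=\pi_n(w(g_1,\ldots,g_m))=1$. So every $Q_n$ satisfies the same fixed word $w$. The key reduction is therefore to show that no single non-trivial word is a law in infinitely many pairwise non-isomorphic non-abelian finite simple groups.

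For this we use the classification of finite simple groups. There are only finitely many sporadic groups. Hence infinitely many of the $Q_n$ are alternating groups or groups of Lie type. We split into two cases.

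\emph{Case 1: infinitely many Lie type groups of bounded rank.} Then the orders of the defining fields must be unbounded. We then use the standard fact that such groups contain elements of unbounded order. For instance, the image of a diagonal torus element of order roughly $q-1$ in a copy of $\mathrm{PSL}_2(q)$, or of $\mathrm{SL}_2$, inside $Q_n$ has order on the order of $q$.

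\emph{Case 2: otherwise.} Then either the alternating degrees or the Lie ranks are unbounded. Then the groups contain $\Alt(r)$, or $\mathrm{PSL}_r(q)$ (which contains a copy of $\Alt(r)$ via permutation matrices modulo scalars, at least a large symmetric section), for arbitrarily large $r$. Since every finite group of order $s$ embeds in $\Alt(s+2)$, in this case every finite group embeds in some $Q_n$.

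We now finish each case.

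\emph{Case 2.} Take a finite group not satisfying $w$. For example, since $F_m$ is residually finite, there is a finite quotient $F$ of $F_m$ in which the image of $w$ is non-trivial. This $F$ embeds in some $Q_n$, and laws pass to subgroups, a contradiction.

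\emph{Case 1.} The simple groups $Q_n$ contain cyclic groups of unbounded order, and I would aim to contradict this via a uniform structural consequence of $w$. The cleanest route is to note that $\mathrm{PSL}_2(q)$, or $\mathrm{SL}_2(q)$ modulo centre, embeds as a subgroup or section in each bounded-rank Lie type group with $q$ unbounded. It then suffices to show that $\mathrm{PSL}_2(q)$ for infinitely many $q$ cannot all satisfy $w$.

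By an ultraproduct or Lang--Weil type argument, the word map $w$ on $\mathrm{SL}_2(\overline{\mathbb F}_p)$ is non-trivial. This relies on Borel's dominance theorem, or on the fact that $F_2$ embeds in $\mathrm{SL}_2(\mathbb Z)$. The reduction of a non-identity element $w(A,B)$ for a free pair $A,B\in\mathrm{SL}_2(\mathbb Z)$ is non-trivial modulo all but finitely many primes $p$. For non-prime $q=p^e$ with $p$ bounded, use instead that $\mathrm{SL}_2(\mathbb F_p[t])$ contains free subgroups and reduce modulo irreducible polynomials of large degree.

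A simpler uniform alternative covering everything is Jones' theorem, which states that any infinite family of finite simple groups generates the variety of all groups. I would cite this if a reference is permitted. Either way $w$ fails in some $Q_n$, a contradiction.

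The main obstacle is Case 1, the bounded-rank Lie type groups over growing fields. Case 2 and the sporadic and alternating cases are elementary. My first attempt will be the free-subgroup reduction in $\mathrm{SL}_2$ described above, falling back on Jones' theorem.
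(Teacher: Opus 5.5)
\textbf{Your fallback matches the paper; your self-contained route has a gap.} Falling back on Jones's theorem is exactly the paper's proof. The paper notes that the law descends to every $Q_n$. It then cites Jones's theorem that a fixed non-trivial word is a law in only finitely many non-abelian finite simple groups, and uses $|Q_n|\to\infty$. One correction: Jones's theorem must be stated for \emph{non-abelian} simple groups, since all cyclic groups of prime order satisfy $[x,y]=1$. This is harmless here, and with that citation your proposal is complete.

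\textbf{The gap is in Case~1.} You claim that $\mathrm{PSL}_2(q)$ or $\mathrm{SL}_2(q)$, with $q$ unbounded, occurs as a subgroup or section of every bounded-rank Lie-type family. This is false for the Suzuki groups $\mathrm{Sz}(q)={}^2B_2(q)$, $q=2^{2m+1}$. Since $q\equiv 2 \pmod 3$, their order $q^2(q^2+1)(q-1)$ is prime to $3$. Every $\mathrm{SL}_2(q')$ and $\mathrm{PSL}_2(q')$ has order divisible by $3$, so no section of $\mathrm{Sz}(q)$ has that form. Your reduction to $\mathrm{PSL}_2$ therefore says nothing about this family. The given $\{Q_n\}$ may consist entirely of Suzuki groups, so the case cannot be discarded.

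\textbf{Choosing a different subgroup will not repair this.} For $q=2^p$ with $p$ an odd prime, $\mathrm{Sz}(q)$ is a minimal simple group. Every proper subgroup lies in a Borel subgroup, a dihedral group $D_{2(q-1)}$, or some $C_{q\pm\sqrt{2q}+1}\rtimes C_4$. All of these are solvable of derived length at most $3$. Hence the non-trivial word $[[[x_1,x_2],[x_3,x_4]],[[x_5,x_6],[x_7,x_8]]]$ is a law in every proper section of every such $\mathrm{Sz}(2^p)$. So no subgroup- or section-based argument can show that a given $w$ fails in these groups. You need an argument that sees the whole group, such as Jones's theorem, the Larsen--Shalev fibre bounds, or an ultraproduct argument for the twisted groups ${}^2B_2$.

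\textbf{The rest of your route is sound.} This covers the descent of laws to quotients, the embedding of every finite group into a large $\Alt(r)$ in Case~2, and the reductions of free pairs in $\mathrm{SL}_2(\mathbb Z)$ and $\mathrm{SL}_2(\mathbb F_p[t])$. In Case~2 you should add that the unitary, symplectic and orthogonal families of unbounded rank also contain $\Alt(r)$ for unbounded $r$, for example inside a $\mathrm{GL}_r$-type Levi subgroup. Mentioning only $\mathrm{PSL}_r(q)$ leaves those families uncovered.
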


\begin{proof}
    Suppose, toward a contradiction, that $\Gamma$ satisfies a non-trivial group law. That is, there exists a non-trivial word $w(x_1, \dots, x_k) \in \mathbb{F}_k$ such that $w(g_1, \dots, g_k) = 1$ for all $g_1, \dots, g_k \in \Gamma$.
    
    Let $\pi_n \colon \Gamma \twoheadrightarrow Q_n$ be the surjective homomorphisms onto the finite simple quotients. Since $w$ is a law in $\Gamma$, it must necessarily descend to a law in each quotient $Q_n$; that is, $w(q_1, \dots, q_k) = 1$ for all $q_1, \dots, q_k \in Q_n$ and for all $n\geq 1$.
    
    However, a result of G. A. Jones \cite{Jones_1974} establishes that for any non-trivial word $w$, there exists a constant $N_w$ such that $w$ is not a law for any finite simple group $G$ with $|G| > N_w$. Since the $\{Q_n\}$ are pairwise non-isomorphic finite simple groups, their orders $|Q_n|$ must tend to infinity as $n \to \infty$. Consequently, the word $w$ can be a law for only finitely many of the quotients $Q_n$. This contradicts the assumption that $w$ is a law for all $Q_n$, completing the proof.
\end{proof}

\begin{remark}\label{nonvirtuallysolvableofGamme}
    Any group $\Gamma$ satisfying the hypotheses of Proposition \ref{nolawforfinitesimple} cannot be virtually solvable. Indeed, if $\Gamma$ contains a solvable subgroup $\Gamma'$ of finite index $d=[\Gamma:\Gamma']$, the law $w$ satisfied by $\Gamma'$ would induce a non-trivial law $w(x_1^{d!}, \dots, x_n^{d!}) = 1$ for the entire group $\Gamma$. This contradicts the fact that $\Gamma$ satisfies no non-trivial group law.
\end{remark} 

If we restrict our attention to the class of groups satisfying the Girth Alternative--which asserts that a finitely generated group in the class has infinite girth if and only if it is not virtually solvable--we obtain an affirmative answer to Question \ref{pyberfinitesimplenonabelian}. Specifically, by Remark \ref{nonvirtuallysolvableofGamme}, since $\Gamma$ is not virtually solvable, it follows that $\gir(\Gamma) = \infty$. This applies, for instance, to groups belonging to classes where the Girth Alternative is known to hold, including linear groups, word hyperbolic groups, one relator groups \cite{akhmedov2005girth}, mapping class groups \cite{nakamura2014girth}, HNN extensions \cite{MR5091911}, groups acting on CAT(0) cube complexes \cite{banerjee2026girth}, and most recently for the groups admitting topologically free extreme boundary actions \cite{amrutam2026boundary}. 

\subsection{Resolving Question \ref{pyberfinitesimplenonabelian} via product replacement graph}Now, we say more to Question \ref{pyberfinitesimplenonabelian} than we discussed earlier in this section, providing a general affirmative answer in Theorem \ref{thm:redundant-nielsen-lifting}. Our approach departs from the study of arbitrary lifts of generating sets and instead leverages the structure of the product replacement graph. We are grateful to Miklós Abért for suggesting that the properties of this graph could provide a powerful avenue to answer Question \ref{pyberfinitesimplenonabelian}. The use of the product replacement framework to solve deep problems in group theory has been highlighted by recent landmark results, most notably the breakthrough work of Kaluba, Kielak, and Nowak \cite{kaluba2021property}, in which they proved that $\operatorname{Aut}(F_n)$ has the Kazhdan property (T) for every $n \geq 6$.

Let $\mathbf{F}_k=F(a_1,\ldots,a_k)$ denote the free group of rank $k$, equipped
with the standard word length $|\cdot|$ with respect to
$\{a_1^{\pm1},\ldots,a_k^{\pm1}\}$.

If $\mathbf{g}=(g_1,\ldots,g_k)\in G^k$ is a $k$-tuple, define the evaluation
homomorphism
\[
  \operatorname{ev}_{\mathbf{g}}\colon \mathbf{F}_k\longrightarrow G,
  \qquad
  a_j\longmapsto g_j.
\]
The tuple $\mathbf{g}$ is a generating $k$-tuple of $G$ if
$\operatorname{ev}_{\mathbf{g}}$ is surjective.

\begin{definition}
Let $\mathbf{g}=(g_1,\ldots,g_k)$ be a generating $k$-tuple of a
group $G$. Its marked girth is
\[
  \gir(G,\mathbf{g})
  :=
  \min\bigl\{
       |w| :
       1\neq w\in \mathbf{F}_k,\ 
       \operatorname{ev}_{\mathbf{g}}(w)=1
     \bigr\},
\]
with the convention that the minimum is $\infty$ if
$\operatorname{ev}_{\mathbf{g}}$ is injective.
For a finitely generated group $G$, set
\[
  \gir^{=}_k(G)
  :=
  \sup\bigl\{
       \gir(G,\mathbf{g}) :
       \mathbf{g}\in G^k,\ 
       \langle g_1,\ldots,g_k\rangle=G
     \bigr\}.
\]
\end{definition}

Thus $\gir^{=}_k(G)$ is the supremum over ordered
generating $k$-tuples. For tuples with marked girth greater than $2$,
the entries are non-trivial and pairwise distinct, so the same tuple
also gives a generating set of cardinality exactly $k$.

Recall that a generating $k$-tuple
$\mathbf{g}=(g_1,\ldots,g_k)$ is called \emph{redundant} if one of
its entries can be omitted while the remaining entries still generate
the group. Equivalently, after applying Nielsen transformations,
a redundant tuple can be put in the form
\[
  (h_1,\ldots,h_{k-1},1).
\]

Two generating $k$-tuples are \emph{Nielsen equivalent} if they belong
to the same $\operatorname{Aut}(F_k)$-orbit. Equivalently, they can be
connected by a finite sequence of elementary Nielsen moves, such as
\[
  g_i\longmapsto g_i^{-1},
  \qquad
  g_i\longmapsto g_i g_j^{\pm1},
  \qquad
  g_i\longmapsto g_j^{\pm1}g_i
  \quad (i\neq j),
\]
together with permutations of the coordinates. Recently, in a paper under review \cite{mishra2024dynamics}, the author of this paper introduced the notion of Nielsen girth and Schreier girth using similar analogy to the product replacement graph and prove the existence of
groups with infinite girth but having finite Nielsen girth, which can be seen as another source of motivation for the result of this kind.

Let us recall that a \textit{word} $w$ is an element of a free group $F_d=\langle x_1,x_2,\dots, x_d\rangle$. For a given finite group $G$, the word $w$ gives rise to a word map $w=w_G:G^d\to G$ defined by substitution.

\begin{definition}
    For a finite group $G$, define 
    \begin{equation}
        \mathbb{P}(w=1)=\frac{|\ker(w_G)|}{|G|^d},
    \end{equation}
    the probability that $w(g_1,g_2,\dots,g_d)=1,$ where $g_i$'s are chosen randomly.
\end{definition}
It is easy to see that $w=1$ is a non-trivial law of $G$ if and only if $\mathbb{P}(w=1)=1$. When $w\neq 1$ and $G$ is a non-abelian finite simple group, if follows from \cite{Jones_1974} that $\mathbb{P}(w=1)<1$ provided $|G|$ is large enough. Further, we use the following notation to describe the probability that a random tuples of element satisfies a specific relation:
\begin{enumerate}
    \item $\mathbf{q}\in Q^k$: We choose $k$-elements $(q_1,q_2,\dots,q_k)$ from the finite group $Q$ uniformly at random.
    \item $\mathbb{P}_{\mathbf{q}\in Q^k} \bigl(w(\mathbf{q})=1\bigr)$: The probability that the random tuple $\mathbf{q}$ satisfies the word equation $w=1$.
\end{enumerate}

We shall use the following elementary lifting observation.

\begin{lemma}[Nielsen moves and quotients]
\label{lem:nielsen-quotient-lifting}
Let $\pi\colon G\twoheadrightarrow Q$ be an epimorphism, let
$\mathbf{g}\in G^k$ be a generating tuple, and let
$\varphi\in\operatorname{Aut}(F_k)$. Then:
\begin{enumerate}
\item the tuple $\varphi\cdot\mathbf{g}$ generates $G$;
\item
\[
   \pi(\varphi\cdot\mathbf{g})
   =
   \varphi\cdot\pi(\mathbf{g});
\]
\item
\[
   \operatorname{girth}(G,\varphi\cdot\mathbf{g})
   \geq
   \operatorname{girth}(Q,\varphi\cdot\pi(\mathbf{g})).
\]
\end{enumerate}
\end{lemma}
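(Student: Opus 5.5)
The plan is to fix conventions first and then check the three parts in order. Let $\operatorname{Aut}(F_k)$ act on $G^k$ by
\[
\varphi\cdot\mathbf g:=\bigl(\operatorname{ev}_{\mathbf g}(\varphi^{-1}(a_1)),\ldots,\operatorname{ev}_{\mathbf g}(\varphi^{-1}(a_k))\bigr),
\]
or equivalently by the convention in which the elementary Nielsen moves act on coordinates. The one identity everything rests on is
\[
\operatorname{ev}_{\varphi\cdot\mathbf g}=\operatorname{ev}_{\mathbf g}\circ\psi
\]
for some automorphism $\psi$ of $F_k$ (here $\psi=\varphi^{-1}$). This holds because both sides are homomorphisms $F_k\to G$ that agree on the basis $a_1,\ldots,a_k$. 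The argument is functorial in the target group, so the same identity holds with $G$ replaced by $Q$.

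\textbf{Part (1).} Since $\psi$ is surjective, the image of $\operatorname{ev}_{\varphi\cdot\mathbf g}$ equals the image of $\operatorname{ev}_{\mathbf g}$, which is $G$. Alternatively, one can observe that each elementary Nielsen move leaves $\langle g_1,\ldots,g_k\rangle$ unchanged because the move is invertible.

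\textbf{Part (2).} Applying $\pi$ coordinatewise gives $\pi\circ\operatorname{ev}_{\mathbf g}=\operatorname{ev}_{\pi(\mathbf g)}$, since both sides agree on the basis. Each coordinate of $\varphi\cdot\mathbf g$ is a fixed word evaluated at $\mathbf g$, so applying $\pi$ to it gives the same word evaluated at $\pi(\mathbf g)$. This is exactly the statement $\pi(\varphi\cdot\mathbf g)=\varphi\cdot\pi(\mathbf g)$.

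\textbf{Part (3).} Write $\mathbf h=\varphi\cdot\mathbf g$. By part (2), $\operatorname{ev}_{\pi(\mathbf h)}=\pi\circ\operatorname{ev}_{\mathbf h}$. Hence
\[
\ker\operatorname{ev}_{\mathbf h}\subseteq\ker\operatorname{ev}_{\pi(\mathbf h)}=\ker\operatorname{ev}_{\varphi\cdot\pi(\mathbf g)}.
\]
Marked girth is the minimal length of a non-trivial word in this kernel, so a larger kernel has a minimum that is at most as large. This gives
\[
\operatorname{girth}(G,\mathbf h)\ge\operatorname{girth}(Q,\varphi\cdot\pi(\mathbf g)).
\]
The convention that the minimum over an empty set is $\infty$ covers the case where $\operatorname{ev}_{\mathbf h}$ is injective.

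The only place needing any care is fixing the action convention so that part (2) is an honest equality. Everything else is formal.
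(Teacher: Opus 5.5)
Your proof is correct and follows the paper's argument: part (1) is surjectivity preserved under precomposition with an automorphism of $F_k$, part (2) is functoriality of word evaluation, and part (3) is the observation that every relation of $\varphi\cdot\mathbf g$ in $G$ maps to a relation of its image in $Q$, which is exactly your inclusion $\ker\operatorname{ev}_{\mathbf h}\subseteq\ker\operatorname{ev}_{\pi(\mathbf h)}$. Your explicit choice of action convention is a harmless extra precision that the paper leaves implicit.
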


\begin{proof}
The first assertion follows because precomposition of a surjective map
$F_k\twoheadrightarrow G$ with an automorphism of $F_k$ remains
surjective. The second assertion follows from the functoriality of word
evaluation.

For the third assertion, suppose that
\[
  w(\varphi\cdot\mathbf{g})=1
\]
for some non-trivial reduced word $w\in F_k$. Applying $\pi$ gives
\[
  w(\varphi\cdot\pi(\mathbf{g}))=1.
\]
Consequently, every relation satisfied by
$\varphi\cdot\mathbf{g}$ is also satisfied by its image in $Q$, which
proves the asserted inequality.
\end{proof}

Recall that a group $Q$ has \emph{spread at least $2$} if, for every
pair of non-trivial elements $x_1,x_2\in Q$, there exists $y\in Q$
such that
\[
  \langle x_1,y\rangle
  =
  \langle x_2,y\rangle
  =
  Q.
\]
A result recorded as Proposition~2.5.12 of
\cite{MR1829489}, and attributed there to Evans
\cite{MR1188815}, states that if $Q$ is two-generated and has spread at
least $2$, then, for every $k\geq 3$, all redundant generating
$k$-tuples of $Q$ belong to the same component of the product
replacement graph. In particular, all such tuples are Nielsen
equivalent.

We now formulate the lifting criterion.

\begin{theorem}[Redundant Nielsen-component lifting]
\label{thm:redundant-nielsen-lifting}
Let $\Gamma$ be a finitely generated group, let $d=d(\Gamma)$ be the cardinality of the minimal generating set of $\Gamma$
and fix an integer
\[
  k\geq \max\{d+1,3\}.
\]
Suppose that there is a sequence of epimorphisms
\[
  \pi_i\colon \Gamma\twoheadrightarrow Q_i
\]
onto finite groups satisfying the following conditions:
\begin{enumerate}
\item[\rm (i)]
each $Q_i$ is two-generated and has spread at least $2$;

\item[\rm (ii)]
if $x_i,y_i$ are chosen independently and uniformly from $Q_i$, then
\[
  \mathbb{P}\bigl(\langle x_i,y_i\rangle=Q_i\bigr)
  \longrightarrow 1;
\]

\item[\rm (iii)]
for every non-trivial word $w\in F_k$, if
$\mathbf{q}_i$ is chosen uniformly from $Q_i^k$, then
\[
  \mathbb{P}\bigl(w(\mathbf{q}_i)=1\bigr)
  \longrightarrow 0.
\]
\end{enumerate}
Then
\[
  \operatorname{girth}^{=}_k(\Gamma)=\infty.
\]
More precisely, for every $R\geq 1$, there exists a generating
$k$-tuple $\mathbf{S}_R$ of $\Gamma$ such that
\[
  \operatorname{girth}(\Gamma,\mathbf{S}_R)>R.
\]
\end{theorem}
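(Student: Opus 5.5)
Fix $R\geq1$. The plan is to find, in a single quotient $Q_i$, a generating $k$-tuple of marked girth greater than $R$ that is Nielsen equivalent to the image of a generating tuple of $\Gamma$. We then transport that equivalence back to $\Gamma$ using Lemma~\ref{lem:nielsen-quotient-lifting}.

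\emph{Step 1: a large-girth tuple in a quotient.} Let $W_R$ be the finite set of non-trivial reduced words in $F_k$ of length at most $R$. By (iii) and a union bound,
\[
\mathbb P_{\mathbf q\in Q_i^k}\bigl(\exists\, w\in W_R:\ w(\mathbf q)=1\bigr)\le\sum_{w\in W_R}\mathbb P\bigl(w(\mathbf q)=1\bigr)\longrightarrow0 .
\]
Since $k\geq3$, the first two coordinates $q_1,q_2$ of a uniform $\mathbf q\in Q_i^k$ are independent and uniform. By (ii), they generate $Q_i$ with probability tending to $1$. Whenever they do, $\mathbf q$ is a \emph{redundant} generating $k$-tuple, because dropping the last coordinate still leaves a generating set. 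Hence for $i$ large there is a redundant generating $k$-tuple $\mathbf q^{(i)}$ of $Q_i$ with $\gir(Q_i,\mathbf q^{(i)})>R$.

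\emph{Step 2: a redundant lift from $\Gamma$.} Choose a generating $d$-tuple $(g_1,\dots,g_d)$ of $\Gamma$ and set $\mathbf g=(g_1,\dots,g_d,1,\dots,1)\in\Gamma^k$. This uses $k\geq d+1$, so at least one entry is trivial. Then $\mathbf g$ generates $\Gamma$, and $\pi_i(\mathbf g)$ generates $Q_i$ and contains a trivial entry, so it is a redundant generating $k$-tuple of $Q_i$.

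\emph{Step 3: Nielsen connectivity and lifting.} By (i), $Q_i$ is two-generated with spread at least $2$ and $k\geq3$. The result of Evans recorded in \cite[Prop.~2.5.12]{MR1829489} therefore says that $\pi_i(\mathbf g)$ and $\mathbf q^{(i)}$ lie in the same Nielsen component. So there is $\varphi\in\operatorname{Aut}(F_k)$, a composite of elementary moves and permutations, with $\varphi\cdot\pi_i(\mathbf g)=\mathbf q^{(i)}$. Set $\mathbf S_R=\varphi\cdot\mathbf g$. By Lemma~\ref{lem:nielsen-quotient-lifting}, $\mathbf S_R$ generates $\Gamma$ and $\pi_i(\mathbf S_R)=\mathbf q^{(i)}$. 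Moreover,
\[
\gir(\Gamma,\mathbf S_R)\geq\gir(Q_i,\mathbf q^{(i)})>R.
\]
Since the marked girth exceeds $2$, the entries of $\mathbf S_R$ are non-trivial and distinct. This gives $\gir^{=}_k(\Gamma)=\infty$.

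\emph{Main obstacle.} The delicate point is Step 3: we must ensure the Nielsen component containing the image of our lift is the same component containing random large-girth tuples. In general, product replacement graphs of finite groups can be disconnected, even though every Nielsen move clearly lifts to $\Gamma$. Redundancy is exactly what Evans' theorem requires. This is why we pad the lift with identity entries and use (ii) to make the random tuple redundant. One should also check that "component of the product replacement graph" in \cite{MR1829489} coincides with Nielsen equivalence including inversions and permutations. If it uses fewer moves, that only strengthens the conclusion, since those moves still lie in $\operatorname{Aut}(F_k)$.
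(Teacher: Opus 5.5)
Your proof is correct and follows the paper's argument essentially step for step: you pad a minimal generating tuple of $\Gamma$ with trivial entries so its image is redundant, use (ii) and a union bound over (iii) to find a redundant tuple of girth $>R$ in some $Q_i$, connect the two tuples via Evans' result \cite[Prop.~2.5.12]{MR1829489}, and lift the Nielsen moves with Lemma~\ref{lem:nielsen-quotient-lifting}. Your observation that connectivity in the product replacement graph implies Nielsen equivalence (since its edges are Nielsen moves) is the same reading of Evans' theorem that the paper relies on.
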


\begin{proof}
Choose a generating $d$-tuple
\[
  (x_1,\ldots,x_d)
\]
of $\Gamma$, and stabilize it to a generating $k$-tuple
\[
  \mathbf{S}
  :=
  (x_1,\ldots,x_d,
    \underbrace{1,\ldots,1}_{k-d\text{ entries}})
  \in\Gamma^k.
\]
Since $k\geq d+1$, the tuple $\mathbf{S}$ contains at least one trivial
entry. Therefore, for every $i$, the image
\[
  \pi_i(\mathbf{S})
\]
is a redundant generating $k$-tuple of $Q_i$.

Fix $R\geq 1$, and let
\[
  \mathcal{W}_{k,R}
  :=
  \{w\in F_k: w\neq 1,\ |w|\leq R\}.
\]
The set $\mathcal{W}_{k,R}$ is finite.

Choose a uniformly random tuple
\[
  \mathbf{q}_i=(q_{i,1},\ldots,q_{i,k})\in Q_i^k.
\]
Consider the two events
\[
  A_i
  :=
  \{\langle q_{i,1},q_{i,2}\rangle=Q_i\}
\]
and
\[
  B_i
  :=
  \{w(\mathbf{q}_i)\neq 1
       \text{ for every }w\in\mathcal{W}_{k,R}\}.
\]
By assumption~{\rm (ii)},
\[
  \mathbb{P}(A_i)\longrightarrow 1.
\]
Moreover, by the union bound and assumption~{\rm (iii)},
\[
\begin{aligned}
  \mathbb{P}(B_i^c)
  &=
  \mathbb{P}\bigl(
       \text{there exists }w\in\mathcal{W}_{k,R}
       \text{ such that }w(\mathbf{q}_i)=1
     \bigr)
\\
  &\leq
  \sum_{w\in\mathcal{W}_{k,R}}
  \mathbb{P}\bigl(w(\mathbf{q}_i)=1\bigr)
  \longrightarrow 0.
\end{aligned}
\]
Hence
\[
  \mathbb{P}(A_i\cap B_i)
  \geq
  1-\mathbb{P}(A_i^c)-\mathbb{P}(B_i^c)
  \longrightarrow 1.
\]
In particular, for all sufficiently large $i$, there exists
\[
  \mathbf{y}_i=(y_{i,1},\ldots,y_{i,k})\in Q_i^k
\]
such that
\[
  \langle y_{i,1},y_{i,2}\rangle=Q_i
\]
and
\[
  w(\mathbf{y}_i)\neq 1
  \qquad
  \text{for every }w\in\mathcal{W}_{k,R}.
\]
The first condition implies that $\mathbf{y}_i$ is a redundant
generating $k$-tuple, while the second condition gives
\[
  \operatorname{girth}(Q_i,\mathbf{y}_i)>R.
\]

Both $\pi_i(\mathbf{S})$ and $\mathbf{y}_i$ are redundant generating
$k$-tuples of $Q_i$. Since $Q_i$ is two-generated and has spread at
least $2$, Proposition~2.5.12 of \cite{MR1829489} implies that the two
tuples belong to the same component of the product replacement graph.
Thus there is a finite sequence of Nielsen moves, equivalently some
$\varphi_i\in\operatorname{Aut}(F_k)$, such that
\[
  \varphi_i\cdot\pi_i(\mathbf{S})
  =
  \mathbf{y}_i.
\]

Apply the same Nielsen transformations upstairs and put
\[
  \mathbf{S}_R:=\varphi_i\cdot\mathbf{S}.
\]
By Lemma~\ref{lem:nielsen-quotient-lifting}, the tuple
$\mathbf{S}_R$ generates $\Gamma$, and
\[
  \pi_i(\mathbf{S}_R)
  =
  \varphi_i\cdot\pi_i(\mathbf{S})
  =
  \mathbf{y}_i.
\]
The quotient-girth inequality now gives
\[
  \operatorname{girth}(\Gamma,\mathbf{S}_R)
  \geq
  \operatorname{girth}(Q_i,\mathbf{y}_i)
  >R.
\]
Since $R$ was arbitrary, it follows that
\[
  \operatorname{girth}^{=}_k(\Gamma)=\infty.
\]
\end{proof}

\begin{remark}
The proof of Theorem~\ref{thm:redundant-nielsen-lifting} is
existential. It uses the connectedness of the redundant component of
the product replacement graph, but it does not require a bound on its
diameter or on the mixing time of the product replacement random walk.
In particular, rapid mixing is not needed for the girth conclusion.
\end{remark}

\begin{remark}
For $R\geq 2$, the tuple $\mathbf{S}_R$ has $k$ distinct non-trivial
entries. Indeed, $S_{R,j}=1$ would give a relation of length $1$, while
$S_{R,r}=S_{R,s}$ for $r\neq s$ would give the non-trivial freely
reduced relation $a_ra_s^{-1}$ of length $2$. Thus the theorem also
applies when $\operatorname{girth}^{=}_k(\Gamma)$ is defined using
generating sets of cardinality exactly $k$, rather than ordered
generating tuples.
\end{remark}

\subsection{Application of Theorem \ref{thm:redundant-nielsen-lifting}: Alternating quotients}

\begin{corollary}[Alternating simple quotients]
\label{cor:alternating-quotients}
Let $\Gamma$ be a finitely generated group, and suppose that there are
epimorphisms
\[
  \pi_i\colon\Gamma\twoheadrightarrow \Alt(n_i),
\]
where $n_i\to\infty$. Then, for every
\[
  k\geq d(\Gamma)+1,
\]
one has
\[
  \operatorname{girth}^{=}_k(\Gamma)=\infty.
\]
In particular,
\[
  \operatorname{girth}(\Gamma)=\infty.
\]
\end{corollary}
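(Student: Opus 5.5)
We will derive the corollary directly from Theorem~\ref{thm:redundant-nielsen-lifting}, applied to the quotients $Q_i=\Alt(n_i)$. First we dispose of small cases in $k$. If $k\geq\max\{d+1,3\}$ we apply the theorem as stated. The only remaining case is $d(\Gamma)=1$ and $k=2$, but then $\Gamma$ is cyclic, so it has only abelian quotients. This contradicts $\Alt(n_i)$ being a quotient with $n_i\to\infty$, so the case cannot occur. After passing to the subsequence with $n_i\geq 5$, each quotient is non-abelian simple. The final claim $\gir(\Gamma)=\infty$ then follows, since $\gir(\Gamma)\geq\gir^{=}_k(\Gamma)$; by the remark after the theorem, the tuple $\mathbf S_R$ has distinct non-trivial entries once $R\geq 2$.

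It remains to verify hypotheses (i)--(iii) for $Q_i=\Alt(n_i)$, and we will cite classical results for each.

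For (i), every finite simple group is two-generated (Steinberg and Aschbacher--Guralnick; for $\Alt(n)$ this is elementary). Spread at least $2$ for alternating groups is due to Brenner--Wiegold and, more generally, Breuer--Guralnick--Kantor, who showed that every non-abelian finite simple group has spread at least $2$.

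For (ii), Dixon's theorem gives $\mathbb P(\langle x,y\rangle=\Alt(n))\to1$ as $n\to\infty$ (extended to all finite simple groups by Kantor--Lubotzky and Liebeck--Shalev).

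For (iii), fix a non-trivial $w\in F_k$. We need $\mathbb P_{\mathbf q\in\Alt(n)^k}(w(\mathbf q)=1)\to0$ as $n\to\infty$. Jones' theorem alone only gives that this probability is $<1$, so a quantitative statement is needed, and this is the main obstacle. We would cite the Larsen--Shalev / Larsen--Shalev--Tiep results on fibres of word maps. These give $|w^{-1}(1)|\le |G|^{k-\epsilon_w}$ for finite simple $G$ of large enough order, with $\epsilon_w>0$ depending only on $w$, and hence the probability is at most $|G|^{-\epsilon_w}\to0$. For alternating groups specifically one can use Larsen--Shalev's ``fibers of word maps and some applications''. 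As an alternative if we want a more self-contained route, there is the classical argument via random permutations: one shows that a random $k$-tuple in $\operatorname{Sym}(n)$ almost surely acts freely (in the sense of word $w$) on some point, since for a fixed reduced word of length $\ell$ the trajectory of a point under $w$ avoids closing up with probability $1-O(\ell^2/n)$. Combined with restriction to $\Alt(n)$, which changes probabilities by at most a factor $2^k$, this yields $\mathbb P(w=1)\to0$. We would present the citation and sketch this alternative for robustness.

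Having verified (i)--(iii), Theorem~\ref{thm:redundant-nielsen-lifting} yields for every $R$ a generating $k$-tuple of $\Gamma$ of marked girth $>R$, proving $\gir^{=}_k(\Gamma)=\infty$.
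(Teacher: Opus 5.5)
Your proposal is correct and takes essentially the same route as the paper. Rule out $k<3$ because a group mapping onto a non-abelian group is not cyclic, then verify hypotheses (i)--(iii) of Theorem~\ref{thm:redundant-nielsen-lifting} using Breuer--Guralnick--Kantor spread, random generation (the paper cites Liebeck--Shalev where you cite Dixon), and the Larsen--Shalev bound $\mathbb{P}(w=1)\le |Q|^{-\varepsilon(w)}$.

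Your optional elementary argument for (iii) is also valid and makes that step self-contained for alternating groups. It bounds by $O(\ell^2/n)$ the probability that the trajectory of a point under uniform random permutations has a collision, then passes to $\Alt(n)$ at the cost of a factor $2^k$.
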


\begin{proof}
Since $\Gamma$ has a non-abelian quotient, it is not cyclic, and hence
$d(\Gamma)\geq 2$. Thus $k\geq d(\Gamma)+1$ implies $k\geq 3$.

We verify the three hypotheses of
Theorem~\ref{thm:redundant-nielsen-lifting}.

First, the alternating group $\Alt(n)$ is a non-abelian finite simple group for $n\geq 5$.
Moreover, alternating groups are two-generated and have spread at
least $2$. The spread assertion also follows from the stronger theorem
of Breuer--Guralnick--Kantor that every non-abelian finite simple group
has uniform spread at least $2$; see
\cite[Corollary~1.3]{MR2422303}.

Second, a theorem of Liebeck and Shalev states that, as $Q$ ranges over
non-abelian finite simple groups with $|Q|\to\infty$, the probability
that two independent uniformly distributed elements generate $Q$
tends to $1$; see \cite{MR1338320}. In particular,
\[
  \mathbb{P}\bigl(
     \langle x_i,y_i\rangle=\Alt(n_i)
  \bigr)
  \longrightarrow 1
\]
for independent uniformly distributed $x_i,y_i\in \Alt(n_i)$.

Third, let $1\neq w\in \mathbf{F}_k$ be fixed. Larsen and Shalev proved that
there is a constant $\varepsilon(w)>0$ such that, for every
sufficiently large non-abelian finite simple group $Q$,
\[
  \mathbb{P}_{\mathbf{q}\in Q^k}
  \bigl(w(\mathbf{q})=1\bigr)
  \leq
  |Q|^{-\varepsilon(w)};
\]
see \cite[Theorem~1.1]{MR2879221}. Since
$|\Alt(n_i)|\to\infty$, it follows that
\[
  \mathbb{P}_{\mathbf{q}\in \Alt(n_i)^k}
  \bigl(w(\mathbf{q})=1\bigr)
  \longrightarrow 0.
\]

All the hypotheses of
Theorem~\ref{thm:redundant-nielsen-lifting} are therefore satisfied,
and the conclusion follows.
\end{proof}

\begin{remark}
The standard generating pair of $\operatorname{Alt}(n)$
\[
 \alpha_n = (1, 2, 3, \dots, n) \quad \text{and} \quad \beta = (1, 2, 3).
\]
should not be used as the desired large-girth pair, since
$\beta^3=1$ and hence the corresponding marked Cayley graph has girth
at most $3$. The proof of
Corollary~\ref{cor:alternating-quotients} instead uses random
redundant tuples of large girth and connects them, by Nielsen moves,
to the image of a stabilized generating tuple of $\Gamma$.
\end{remark}

\subsection{Application of Theorem \ref{thm:redundant-nielsen-lifting}: Arbitrary non-abelian finite simple quotients}

\begin{corollary}[Infinitely many finite simple quotients]
\label{cor:general-simple-quotients}
Let $\Gamma$ be a finitely generated group. Suppose that $\Gamma$
admits infinitely many pairwise non-isomorphic non-abelian finite
simple quotients. Then, for every
\[
  k\geq d(\Gamma)+1,
\]
one has
\[
  \operatorname{girth}^{=}_k(\Gamma)=\infty.
\]
Consequently,
\[
  \operatorname{girth}(\Gamma)=\infty.
\]
\end{corollary}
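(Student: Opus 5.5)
The plan is to apply Theorem~\ref{thm:redundant-nielsen-lifting} directly, in the same way as in Corollary~\ref{cor:alternating-quotients}, to a sequence of the given quotients.

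First, list the pairwise non-isomorphic non-abelian finite simple quotients as $\pi_i\colon\Gamma\twoheadrightarrow Q_i$. There are only finitely many isomorphism types of groups of any bounded order, so $|Q_i|\to\infty$. Since $\Gamma$ has a non-abelian quotient, it is not cyclic. Hence $d(\Gamma)\geq 2$, and $k\geq d(\Gamma)+1$ forces $k\geq 3$, i.e.\ $k\geq\max\{d(\Gamma)+1,3\}$ as required by the theorem.

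Next, I verify the three hypotheses of Theorem~\ref{thm:redundant-nielsen-lifting}.
\begin{enumerate}
\item[\rm (i)] Every non-abelian finite simple group is two-generated, by the classification together with the results of Steinberg and Aschbacher--Guralnick. By Breuer--Guralnick--Kantor \cite[Corollary~1.3]{MR2422303}, every such group has uniform spread at least $2$, and hence spread at least $2$.
\item[\rm (ii)] The Liebeck--Shalev theorem \cite{MR1338320} is stated for arbitrary non-abelian finite simple groups $Q$ with $|Q|\to\infty$. Since $|Q_i|\to\infty$, it gives $\mathbb P(\langle x_i,y_i\rangle=Q_i)\to1$.
\item[\rm (iii)] For a fixed non-trivial $w\in F_k$, the Larsen--Shalev bound $\mathbb P_{\mathbf q\in Q^k}(w(\mathbf q)=1)\leq |Q|^{-\varepsilon(w)}$ \cite[Theorem~1.1]{MR2879221} holds for all sufficiently large finite simple $Q$. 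Combined with $|Q_i|\to\infty$, this sends the probability to $0$.
\end{enumerate}

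Then Theorem~\ref{thm:redundant-nielsen-lifting} yields, for each $R$, a generating $k$-tuple $\mathbf S_R$ with $\gir(\Gamma,\mathbf S_R)>R$, so $\gir^{=}_k(\Gamma)=\infty$. By the remark following the theorem, for $R\geq2$ the entries of $\mathbf S_R$ are distinct and non-trivial. They therefore form a genuine generating set of cardinality $k$ with girth greater than $R$, which gives $\gir(\Gamma)=\infty$.

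The main point needing care is that hypotheses (ii) and (iii) must hold uniformly across all families of simple groups, since the $Q_i$ may range over Lie-type groups of bounded rank and growing field size, alternating groups, and so on. Everything hinges on the cited results being stated for all finite simple groups of growing order. I would check that the Larsen--Shalev word-probability estimate indeed covers bounded-rank Lie-type groups; if it did not, I would fall back on a Lang--Weil/Schwartz--Zippel type bound for fixed rank as $q\to\infty$. Alternatively, I would use Jones' theorem together with the Borel dominance of word maps to get $\mathbb P(w=1)\to0$. Sporadic groups cause no difficulty, since only finitely many $Q_i$ can be sporadic and passing to a subsequence removes them.
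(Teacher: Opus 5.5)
Your proposal is correct and follows the paper's proof essentially step for step. Like the paper, you note $|Q_i|\to\infty$ and $k\geq 3$, verify hypotheses (i)--(iii) of Theorem~\ref{thm:redundant-nielsen-lifting} via Breuer--Guralnick--Kantor, Liebeck--Shalev and Larsen--Shalev, and then apply the theorem. The fallback discussion is unnecessary, since the cited Larsen--Shalev bound already covers all sufficiently large finite simple groups.
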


\begin{proof}
Choose pairwise non-isomorphic non-abelian finite simple quotients
$Q_i$ and epimorphisms
\[
  \pi_i\colon\Gamma\twoheadrightarrow Q_i.
\]
There are only finitely many finite groups of bounded order.
Consequently, after passing to a subsequence if necessary,
\[
  |Q_i|\longrightarrow\infty.
\]

Since $\Gamma$ maps onto a non-abelian group, $d(\Gamma)\geq 2$.
Therefore $k\geq d(\Gamma)+1$ implies $k\geq 3$.

By \cite[Corollary~1.3]{MR2422303}, every non-abelian
finite simple group has uniform spread at least $2$, and hence has
spread at least $2$. In particular, each $Q_i$ is two-generated and
satisfies hypothesis~{\rm (i)} of
Theorem~\ref{thm:redundant-nielsen-lifting}.

By \cite{MR1338320}, if $x_i,y_i$ are independent uniformly
distributed elements of $Q_i$, then
\[
  \mathbb{P}\bigl(\langle x_i,y_i\rangle=Q_i\bigr)
  \longrightarrow 1.
\]
This is hypothesis~{\rm (ii)}.

Finally, let $1\neq w\in F_k$. By
\cite[Theorem~1.1]{MR2879221}, there exists
$\varepsilon(w)>0$ such that, for every sufficiently large
non-abelian finite simple group $Q$,
\[
  \mathbb{P}_{\mathbf{q}\in Q^k}
  \bigl(w(\mathbf{q})=1\bigr)
  \leq
  |Q|^{-\varepsilon(w)}.
\]
Since $|Q_i|\to\infty$, we obtain
\[
  \mathbb{P}_{\mathbf{q}\in Q_i^k}
  \bigl(w(\mathbf{q})=1\bigr)
  \longrightarrow 0.
\]
Thus hypothesis~{\rm (iii)} is satisfied.

Applying Theorem~\ref{thm:redundant-nielsen-lifting} gives
\[
  \operatorname{girth}^{=}_k(\Gamma)=\infty.
\]
Since this holds for at least one finite value of $k$, it also follows
that
\[
  \operatorname{girth}(\Gamma)=\infty.
\]
\end{proof}

\begin{remark}
The proof only uses the following part of the product replacement
theory: all redundant generating $k$-tuples lie in one Nielsen
component. It does not require the full product replacement graph
$\Gamma_k(Q_i)$ to be connected.
\end{remark}

\subsection{Application of Theorem \ref{thm:redundant-nielsen-lifting}: Quasi-simple  and almost-simple quotients}
Let us recall that a finite group is \textit{quasi-simple} if $Q=[Q,Q]$ and $Q/Z(Q)$ is a non-abelian finite simple group.
\begin{corollary}[Quasisimple quotients]
\label{cor:quasisimple-quotients}
Let $\Gamma$ be a finitely generated group admitting infinitely many
pairwise non-isomorphic finite quasisimple quotients
\[
    \pi_i\colon\Gamma\twoheadrightarrow Q_i.
\]
Then, for every
\[
    k\geq d(\Gamma)+1,
\]
one has
\[
    \operatorname{girth}^{=}_k(\Gamma)=\infty.
\]
In particular,
\[
    \operatorname{girth}(\Gamma)=\infty.
\]
\end{corollary}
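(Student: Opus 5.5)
The plan is not to apply Theorem~\ref{thm:redundant-nielsen-lifting} to the quasisimple groups $Q_i$ directly. Verifying spread and the random-generation and word-probability hypotheses for central extensions would take extra work. Instead I will pass to the simple quotients $\bar Q_i:=Q_i/Z(Q_i)$ and reduce to Corollary~\ref{cor:general-simple-quotients}.

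For each $i$, compose $\pi_i$ with the canonical projection $Q_i\twoheadrightarrow Q_i/Z(Q_i)$. This gives an epimorphism $\bar\pi_i\colon\Gamma\twoheadrightarrow\bar Q_i$ onto a non-abelian finite simple group, by the definition of quasisimplicity. It then suffices to show that the $\bar Q_i$ include infinitely many pairwise non-isomorphic groups. Once that holds, Corollary~\ref{cor:general-simple-quotients} applies verbatim and gives $\gir^{=}_k(\Gamma)=\infty$ for every $k\geq d(\Gamma)+1$, and hence $\gir(\Gamma)=\infty$. The condition $k\geq 3$ is already handled there, since $d(\Gamma)\geq 2$.

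The key step, and the only real obstacle, is to show that a fixed non-abelian finite simple group $S$ is the central quotient of only finitely many quasisimple groups up to isomorphism. I would prove this with Schur's theory of covering groups. A perfect group $S$ has a universal central extension $\tilde S\twoheadrightarrow S$ whose kernel is the Schur multiplier $M(S)=H_2(S,\mathbb Z)$, and $M(S)$ is finite because $S$ is finite. So $\tilde S$ is a finite group.

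Now let $Q$ be quasisimple with $Q/Z(Q)\cong S$. Then $Q\to S$ is a central extension and $Q$ is perfect, so the universal property gives a homomorphism $\tilde S\to Q$ over $S$. Its image $H$ satisfies $HZ(Q)=Q$. Since $Z(Q)$ is central, $[Q,Q]=[H,H]\leq H$, and because $Q=[Q,Q]$ this forces $H=Q$. Hence $Q$ is isomorphic to a quotient $\tilde S/C$ with $C\leq M(S)$. As $M(S)$ is finite, there are only finitely many such $C$, and so only finitely many such $Q$.

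Consequently, if the $\bar Q_i$ ran over only finitely many isomorphism types, the $Q_i$ would also run over only finitely many isomorphism types. This contradicts the hypothesis that the $Q_i$ are pairwise non-isomorphic and infinite in number. So infinitely many pairwise non-isomorphic non-abelian finite simple groups occur among the $\bar Q_i$, and we may pass to such a subsequence. Applying Corollary~\ref{cor:general-simple-quotients} to $\bar\pi_i$ then completes the argument.
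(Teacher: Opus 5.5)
Your proposal is correct and follows the paper's proof: you compose each $\pi_i$ with $Q_i\twoheadrightarrow Q_i/Z(Q_i)$, use the finiteness of the universal central extension $\widetilde S$ to show that only finitely many quasisimple groups have a given central quotient $S$, and then apply Corollary~\ref{cor:general-simple-quotients}. The paper only asserts that each such quasisimple group is a quotient of $\widetilde S$; your argument that the image $H$ of $\widetilde S\to Q$ satisfies $HZ(Q)=Q$, and hence $Q=[Q,Q]=[H,H]\leq H$, supplies that step.
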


\begin{proof}
For every $i$, put
\[
    S_i:=Q_i/Z(Q_i).
\]
Since $Q_i$ is quasisimple, $S_i$ is a non-abelian finite simple
group. Composing $\pi_i$ with the canonical central quotient gives an
epimorphism
\[
    \overline{\pi}_i\colon
    \Gamma
      \xrightarrow{\pi_i}
    Q_i
      \twoheadrightarrow
    Q_i/Z(Q_i)=S_i.
\]

We claim that the set of isomorphism types among the $S_i$ is
infinite. Suppose, on the contrary, that only finitely many simple
groups occur among the $S_i$. Passing to an infinite subsequence, we
may assume that
\[
    S_i\cong S
\]
for one fixed non-abelian finite simple group $S$.

Let
\[
    \widetilde{S}\twoheadrightarrow S
\]
be the universal central extension of $S$. Every quasisimple group
$Q$ with
\[
    Q/Z(Q)\cong S
\]
is a quotient of $\widetilde{S}$ by a central subgroup. Since
$\widetilde{S}$ is finite, it has only finitely many subgroups and
hence only finitely many quotient groups up to isomorphism. Therefore
there are only finitely many quasisimple groups $Q$ satisfying
\[
    Q/Z(Q)\cong S.
\]
This contradicts the assumption that the groups $Q_i$ are pairwise
non-isomorphic.

Thus, after passing to a subsequence, the groups $S_i$ are pairwise
non-isomorphic non-abelian finite simple quotients of $\Gamma$.
By Corollary \ref{cor:general-simple-quotients}, we have
\[
    \operatorname{girth}^{=}_k(\Gamma)=\infty
\]
for every $k\geq d(\Gamma)+1$.
\end{proof}
 For almost simple groups, the reduction is not automatic and additional assumptions are needed. Let us recall that a group $Q$ is almost-simple if it contains a non-abelian simple group $S$ (also called the socle $S$ of $Q$), then $$S\leq Q\leq \operatorname{Aut}(S).$$

 \begin{theorem}[Almost-simple Nielsen lifting criterion]
\label{thm:almost-simple-nielsen-lifting}
Let $\Gamma$ be a finitely generated group with
\[
    d=d(\Gamma),
\]
and let
\[
    k\geq d+1.
\]
Suppose that there are epimorphisms
\[
    \pi_i\colon\Gamma\twoheadrightarrow Q_i
\]
onto finite almost-simple groups such that:
\begin{enumerate}
    \item[\rm (i)]
    all redundant generating $k$-tuples of $Q_i$ lie in a single
    Nielsen component;

    \item[\rm (ii)]
    there is an integer $r<k$ such that
    \[
        \liminf_{i\to\infty}
        \mathbb{P}\bigl(
          \langle q_{i,1},\ldots,q_{i,r}\rangle=Q_i
        \bigr)>0,
    \]
    where $q_{i,1},\ldots,q_{i,r}$ are independent uniformly
    distributed elements of $Q_i$;

    \item[\rm (iii)]
    for every non-trivial word $w\in F_k$,
    \[
        \mathbb{P}_{\mathbf q\in Q_i^k}
        \bigl(w(\mathbf q)=1\bigr)
        \longrightarrow0.
    \]
\end{enumerate}
Then
\[
    \operatorname{girth}^{=}_k(\Gamma)=\infty.
\]
\end{theorem}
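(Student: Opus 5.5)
The plan is to run the proof of Theorem~\ref{thm:redundant-nielsen-lifting} almost verbatim, with two substitutions. Hypothesis~(i) replaces the spread-$2$ input from Proposition~2.5.12 of \cite{MR1829489}, and hypothesis~(ii) replaces the ``probability $\to 1$'' generation statement. The latter forces us to combine a probability that is only bounded below with one that tends to one.

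First, fix a generating $d$-tuple $(x_1,\ldots,x_d)$ of $\Gamma$ and stabilize it to $\mathbf S=(x_1,\ldots,x_d,1,\ldots,1)\in\Gamma^k$. Since $k\geq d+1$, the image $\pi_i(\mathbf S)$ is a redundant generating $k$-tuple of $Q_i$ for every $i$.

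Second, fix $R\geq1$ and the finite set $\mathcal W_{k,R}$ of non-trivial words of length at most $R$. For a uniform $\mathbf q_i\in Q_i^k$, consider the events
\[
A_i=\{\langle q_{i,1},\ldots,q_{i,r}\rangle=Q_i\}, \qquad B_i=\{w(\mathbf q_i)\neq1 \text{ for all } w\in\mathcal W_{k,R}\}.
\]
The first $r$ coordinates of $\mathbf q_i$ are independent and uniform, so hypothesis~(ii) gives $c>0$ and $i_0$ with $\mathbb P(A_i)\geq c$ for all $i\geq i_0$. The union bound and hypothesis~(iii) give $\mathbb P(B_i^c)\to0$. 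Hence
\[
\mathbb P(A_i\cap B_i)\geq \mathbb P(A_i)-\mathbb P(B_i^c)\geq c/2>0
\]
for all large $i$. The key observation is that we only need this intersection to be nonempty, not of probability tending to $1$. This is precisely why the weaker liminf hypothesis suffices.

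Third, pick $\mathbf y_i\in A_i\cap B_i$. Since $r<k$, the coordinate $y_{i,k}$ can be omitted and $\mathbf y_i$ still generates $Q_i$, so $\mathbf y_i$ is redundant. On the event $B_i$ we also have $\gir(Q_i,\mathbf y_i)>R$. By hypothesis~(i), $\pi_i(\mathbf S)$ and $\mathbf y_i$ lie in the same Nielsen component, so there is $\varphi_i\in\operatorname{Aut}(F_k)$ with $\varphi_i\cdot\pi_i(\mathbf S)=\mathbf y_i$. Set $\mathbf S_R=\varphi_i\cdot\mathbf S$. By Lemma~\ref{lem:nielsen-quotient-lifting}, $\mathbf S_R$ generates $\Gamma$, satisfies $\pi_i(\mathbf S_R)=\mathbf y_i$, and has
\[
\gir(\Gamma,\mathbf S_R)\geq\gir(Q_i,\mathbf y_i)>R.
\]
For $R\geq2$ the entries are distinct and non-trivial, exactly as in the remark following Theorem~\ref{thm:redundant-nielsen-lifting}. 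Since $R$ is arbitrary, $\gir^{=}_k(\Gamma)=\infty$.

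The main point to check is the probabilistic step, specifically that $A_i\cap B_i$ is nonempty uniformly in large $i$. This is handled by the inequality above and requires no further input. Note that the condition $k\geq3$ from the earlier theorem is not needed here, because hypothesis~(i) is assumed directly rather than derived from spread. There is also no need for the $Q_i$ to be pairwise non-isomorphic: hypothesis~(iii) already supplies the growth needed to kill short words.
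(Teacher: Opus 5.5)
Your proposal is correct and follows the paper's proof essentially step for step. You stabilize a minimal generating tuple with trivial entries, intersect the event $A_i$ (probability bounded below by (ii)) with the event $B_i$ (complement probability tending to $0$ by (iii) and the union bound), use (i) to Nielsen-connect $\pi_i(\mathbf S)$ to the resulting redundant large-girth tuple $\mathbf y_i$, and lift via Lemma~\ref{lem:nielsen-quotient-lifting}, exactly as the paper does.
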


\begin{proof}
Choose a generating $d$-tuple
\[
    (x_1,\ldots,x_d)
\]
of $\Gamma$, and stabilize it to
\[
    \mathbf S
      =
    (x_1,\ldots,x_d,1,\ldots,1)\in\Gamma^k.
\]
Then $\pi_i(\mathbf x)$ is a redundant generating $k$-tuple of
$Q_i$.

Fix $R\geq1$, and let
\[
    \mathcal W_{k,R}
      =
    \{1\neq w\in F_k:|w|\leq R\}.
\]
This is a finite set.

Choose a uniformly random tuple
\[
    \mathbf q_i=(q_{i,1},\ldots,q_{i,k})\in Q_i^k.
\]
Let
\[
    A_i
      =
    \{\langle q_{i,1},\ldots,q_{i,r}\rangle=Q_i\}
\]
and
\[
    B_i
      =
    \{w(\mathbf q_i)\neq1
      \text{ for every }w\in\mathcal W_{k,R}\}.
\]
By assumption~{\rm (ii)}, there exists $\delta>0$ such that
\[
    \mathbb P(A_i)\geq\delta
\]
for all sufficiently large $i$.

By assumption~{\rm (iii)} and the union bound,
\[
\begin{aligned}
    \mathbb P(B_i^c)
    &\leq
    \sum_{w\in\mathcal W_{k,R}}
    \mathbb P\bigl(w(\mathbf q_i)=1\bigr)
      \longrightarrow0.
\end{aligned}
\]
Consequently,
\[
    \mathbb P(A_i\cap B_i)
    \geq
    \mathbb P(A_i)-\mathbb P(B_i^c)>0
\]
for all sufficiently large $i$.

Therefore there exists a tuple
\[
    \mathbf y_i=(y_{i,1},\ldots,y_{i,k})
\]
such that
\[
    \langle y_{i,1},\ldots,y_{i,r}\rangle=Q_i
\]
and
\[
    \gir(Q_i,\mathbf y_i)>R.
\]
Since $r<k$, the tuple $\mathbf y_i$ is redundant.

By assumption~{\rm (i)}, the redundant tuples
$\pi_i(\mathbf S)$ and $\mathbf y_i$ are Nielsen equivalent.
Therefore there exists
\[
    \varphi_i\in\operatorname{Aut}(F_k)
\]
such that
\[
    \varphi_i\cdot\pi_i(\mathbf S)=\mathbf y_i.
\]
Set
\[
    \mathbf S_R:=\varphi_i\cdot\mathbf S.
\]
Then $\mathbf x_R$ generates $\Gamma$, and
\[
    \pi_i(\mathbf S_R)=\mathbf y_i.
\]
It follows that
\[
    \gir(\Gamma,\mathbf S_R)
    \geq
    \gir(Q_i,\mathbf y_i)>R.
\]
Since $R$ was arbitrary,
\[
    \gir^{=}_k(\Gamma)=\infty.
\]
\end{proof}

\begin{remark}
    Theorem \ref{thm:almost-simple-nielsen-lifting} is valid for almost-simple quotients, but its three hypotheses must be checked for the particular family. There are relevant results on random generation of almost-simple groups, but they do not, by themselves, prove the required single Nielsen-component assertion for every almost-simple group \cite{fulman2024probabilistic}.
A useful sufficient condition for the Nielsen-component hypothesis is that $Q_i$ be two-generated and have spread at least $2$. The theorem of Burness, Guralnick and Harper says that a finite group has spread at least $2$ precisely when every proper quotient is cyclic. This can apply to some almost-simple families, but not all of them \cite{burness2021spread}.
\end{remark}

\section{Application of Theorem \ref{thm:redundant-nielsen-lifting}: Girth of the B.H. Neumann group}\label{pyberneumann} 
Recall that \[\operatorname{Sym}_\mathrm{fin}(X)=\{\sigma\in \operatorname{Sym}(X),\hspace{0.1cm}\supp(\sigma)<\infty\}\]
\[\operatorname{Alt}_\mathrm{fin}(X)=\{\sigma\in \operatorname{Sym}(X), \hspace{0.1cm}\supp(\sigma)<\infty,\hspace{0.1cm}\sigma\hspace{0.1cm}\text{is an even permutation}\}\]

Let $s_n(\Gamma)$ denote the number of subgroups of index $n$ in a finitely generated residually finite group $\Gamma$. For a given function $f$, $\Gamma$ is said to be of strict growth type $f$ if there are $b,c>0$ such that $f(n)^b\leq s_n(\Gamma)\leq f(n)^c$ for all $n\geq n_0$. For many year, a question of central interest in the theory of subgroup growth was to investigate which functions actually appears as the type of growth function $s_n(\Gamma)$. In \cite{MR2031168}, Pyber settled this question by proving that there exists 4-generated group $\Gamma$ having strict growth type of $f$, where $f(n)=n^{g(n)}$, and $g(n)$ is a non-decreasing function satisfying $g(n)\leq \log{n}$ with $g(n)=o(n)$. 

The terminology and terms in this section are borrowed from \cite{MR2031168}, which we present here for completeness.

For each $i$, choose a finite set \[\Omega_i=\{w_i^1,w_i^2,\dots, w_i^{n_i}\}\] where $7\leq n_1\leq n_2\leq n_3\leq \dots$

Let $f$ be the function $f:\mathbb{N}\to\mathbb{N}$ that specifies how often each size occurs: \[\# \{i:n_i=n\}=f(n).\] We set \[\Omega=\bigsqcup_{i=1}^{\infty} \Omega_i.\]
Hence, $\Omega$ is a countable disjoint union of finite blocks. The group constructed in \cite{MR2031168} which acts on each block separately.

On each block $\Omega_i$, consider the alternating group \[
\operatorname{Alt}(\Omega_i)\cong A_{n_i}.\] Following the notations as defined in \cite{MR2031168}, define the unrestricted direct product, \[G=\prod_{i=1}^{\infty} \operatorname{Alt}(\Omega_i)\]

As the direct product is unrestricted, for any $g=(g_1, g_2, g_3, \dots)\in G$, there is no requirement that all but finitely many $g_i\in \operatorname{Alt}(\Omega_i)$ are non-trivial. Finite groups are profinite with the discrete topology and arbitary product of profinite group is profinite (by  Tychonoff's theorem), so $G$ is a profinite group. This is same as the product topology on $G$. As mentioned in \cite[Section 1]{MR2031168}, the restricted direct product \[D=\bigoplus_{i=1}^{\infty} \operatorname{Alt}(\Omega_i)\] has profinite completion $\widehat{D}$ isomorphic to $G$.

Then we choose the special set $L_i$, which is defined as follow: Choose a non-decreasing unbounded sequnce of positive integer $l_1\leq l_2\leq l_3\leq \dots$ such that $\frac{n_i}{l_i}\geq 2$. Define \[v_i=\floor*{\frac{n_i}{l_i}-1}l_i.\] Clearly, each $v_i$ is strictly less than $n_i$ and $v_i$ is a multiple of $l_i$. Now define, \[L_i=\{w_i^1,w_i^2,\dots, w_i^{v_i}\}\subseteq \Omega_i\] \[L=\bigcup_{i=1}^{\infty} L_i.\] Each $L_i$ is a large portion of $\Omega_i$ and $\Omega_i\backslash L_i$ is a non-empty "reserved part" of each block $\Omega_i$. Let $Q$ is a subgroup of $G$ that satisfies the following conditions\begin{enumerate}
    \item $Q$ is supported inside $L$: For each $q=(q_1,q_2,\dots)\in Q$ acts as a permutation $q_i$ in each factor set $\Omega_i$, that is $q_i\in \operatorname{Alt}(\Omega_i),$ \[ \supp (q)=\{x\in \Omega: q(x)\neq x\}=\bigsqcup_{i=1}^{\infty} \supp(q_i)\subseteq L\] More precise, for each $1\leq i\leq \infty$ and $q\in Q$, we have \[\supp q_i\subseteq L_i.\] That means, $q_i$ fixes the reserved block $\Omega_i\backslash L_i$ for each $i$, \[q_i(w_i^{j})=w_i^j\hspace{0.1cm}\text{for all}\hspace{0.1cm} v_i<j\leq n_i.\] Hence, the subgroup $Q$ is only allowed to permute within $L_i$ in each block $\Omega_i$ and acts trivial on the tail set $\{w_i^{v_{i+1}}, w_i^{v_{i+2}},\dots, w_i^{n_i}\}$
    \item $Q\cap D=1$: The subgroup $Q$ do not contain any non-trivial element of the dense discrete subgroup $D$.
    \item Distinguishes equal-sized blocks: If $n_i= n_j$, for any $i\neq j$, the the corresponding subgroup $Q_i$ and $Q_j$ acting on sets $\Omega_i$ and $\Omega_j$ must be different, that is $Q_i\neq Q_j$ and $Q_i\leq \operatorname{Sym}(\Omega_i)\cong S_{n_i}$.
\end{enumerate}
Now, define the two global elements of the full symmetric group on $\Omega$. 

Define on each block $\Omega_i,$ $\pi_i=(w_i^1 w_i^2 w_i^3).$ Thus, \[\pi=\prod_i \pi_i\] acts on the first three points of each block $\Omega_i$. Since a 3-cycle is even, each $\pi_i\in A_{n_i}$ and $\pi\in G$.

Define another element $\tau$ as follows, on each block $\Omega_i$, consider the full $n_i$-cycle, \[\tau_i=(w_i^1 w_i^2\dots w_i^{n_i}).\] Thus,  \[\tau=\prod_i \tau_i\] acts on the first $n_i$-points of each block $\Omega_i$. Unlike, element $\pi$, $\tau$ is even only when $n_i$ is odd, therefore, $\tau$ need not belong to the group $G$. However, it always belong to \[G_1=\prod_i \operatorname{Sym}(\Omega_i),\] Now define \[\Gamma=\langle \tau, \pi, Q\rangle.\] Clearly, $\Gamma$ is a subgroup of $G_1$ but not necessarily a subgroup of $G$. As every generator preserves each $\Omega_i$ setwise, every $\Gamma$-orbit is contained in $\Omega_i$. 

$\Gamma$ is finitely generated because $Q$ is finitely generated see \cite[Section 2]{MR2031168}. We prove that the group $\Gamma$ with $Q=1$, $f(n)= 1$ for $n$ odd, and $f(n)=0$ for $n$ even, has infinite girth.
\begin{remark}
    We call the group $\Gamma_0=\langle \pi, \tau\rangle$, the B.H. Neumann group \cite{neumann1937some}. 
\end{remark}
\noindent Our next theorem provides an affirmative answer to Question \ref{compositionfactor}.

\begin{theorem}
\label{thm:neumann-composition-factors}
The B.H. Neumann group $\Gamma_0 = \langle \pi, \tau \rangle$ contains infinitely many distinct finite-index normal subgroups
\[
    N_i\trianglelefteq\Gamma_0
\]
such that:
\begin{enumerate}
    \item the quotients $\Gamma_0/N_i$ are pairwise non-isomorphic
    non-abelian finite simple groups;

    \item
    \[
        \gir(\Gamma_0)=\infty;
    \]

    \item $\Gamma_0$ contains no non-abelian free subgroup.
\end{enumerate}
In particular, the union of the sets of composition factors of the
finite quotients $\Gamma_0/N_i$ contains infinitely many pairwise
non-isomorphic non-abelian finite simple groups.
\end{theorem}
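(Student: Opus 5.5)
We prove all three claims through the block projections. For the B.H. Neumann group we take $n_i$ to be the distinct odd integers $\ge 7$, with one block of each size, since $f(n)=1$ exactly for odd $n$. For each $i$, restriction to $\Omega_i$ gives a homomorphism
\[
  \rho_i\colon\Gamma_0\to\operatorname{Sym}(\Omega_i),\qquad \rho_i(\pi)=\pi_i,\ \rho_i(\tau)=\tau_i.
\]
Since $n_i$ is odd, $\tau_i=(w_i^1\cdots w_i^{n_i})$ is an even $n_i$-cycle. The pair of an $n$-cycle and the 3-cycle $(1\,2\,3)$ on consecutive points generates $\Alt(n)$ for odd $n$; this is the classical generating pair recalled in the remark after Corollary~\ref{cor:alternating-quotients}. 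Hence $\rho_i(\Gamma_0)=\Alt(\Omega_i)\cong\Alt(n_i)$.

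For part (1) we set $N_i:=\ker\rho_i$. These are finite-index normal subgroups, and the quotients $\Gamma_0/N_i\cong\Alt(n_i)$ have distinct orders, so they are pairwise non-isomorphic non-abelian simple groups. In particular the $N_i$ are distinct. The final ``in particular'' statement follows at once, since each quotient is its own unique composition factor.

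For part (2) we apply Corollary~\ref{cor:alternating-quotients} with the epimorphisms $\rho_i\colon\Gamma_0\twoheadrightarrow\Alt(n_i)$, where $n_i\to\infty$. This gives $\gir^{=}_k(\Gamma_0)=\infty$ for every $k\ge d(\Gamma_0)+1=3$, and hence $\gir(\Gamma_0)=\infty$. Corollary~\ref{cor:alternating-quotients} rests on Theorem~\ref{thm:redundant-nielsen-lifting}, whose hypotheses the earlier proof already verified for alternating groups using Breuer--Guralnick--Kantor, Liebeck--Shalev and Larsen--Shalev. So no new input is needed here beyond checking $d(\Gamma_0)=2$: $\Gamma_0$ is generated by two elements and is non-abelian.

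Part (3) is the main step, because it needs the internal structure of $\Gamma_0$ rather than its quotients. First we show that $\Alt_{\mathrm{fin}}$ of the blocks, that is $D=\bigoplus_i\Alt(\Omega_i)$, is contained in $\Gamma_0$. The plan for this is as follows.
\begin{enumerate}
\item[(a)] Let $m$ be the lcm of $n_1,\dots,n_j$, and choose a suitable word $u$ in $\pi$ and $\tau$. The power $u^m$ then acts trivially on blocks $1,\dots,j$ but not on blocks of larger size.
\item[(b)] Taking commutators of such elements with conjugates of $\pi$ and varying the choice of $u$, we isolate a single 3-cycle supported in one large block $\Omega_i$.
\item[(c)] Conjugating by $\Gamma_0$, which acts as $\Alt(\Omega_i)$ on that block, we obtain all of $\Alt(\Omega_i)$.
\end{enumerate}
A more robust alternative for this first step: since the quotients $\Gamma_0/N_i$ are pairwise non-isomorphic simple groups, Goursat's lemma shows that the image of $\Gamma_0$ in each finite product $\prod_{i\le j}\Alt(\Omega_i)$ is the full product. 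We would then use a commutator $[\pi,\tau^{a}\pi\tau^{-a}]$, whose support overlaps only in blocks with small $n_i$, to produce non-trivial finitary elements, and normal closure would then give $D\le\Gamma_0$.

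Next we compute the quotient $\Gamma_0/(\Gamma_0\cap D)$. Modulo finitary permutations, $\pi$ is not trivial, so we instead argue directly on free subgroups. Suppose $\langle a,b\rangle\cong F_2$ in $\Gamma_0$. Every element of $\Gamma_0$ acts on each block with order dividing $|\operatorname{Sym}(\Omega_i)|$. A cleaner route is to show that $\Gamma_0$ is locally-finite-by-abelian or elementary amenable, by exhibiting a finitary normal subgroup with virtually abelian quotient (generated by the image of $\tau$). Elementary amenable groups contain no $F_2$. This identification of $\Gamma_0/(\Gamma_0\cap D)$ is the step I expect to be the hardest, and I would first try to prove that $\pi\in D\cdot\langle\tau\rangle$ up to a finitary correction, or otherwise handle the infinite-support element $\pi$ via the locally finite subgroup it generates with $D$.
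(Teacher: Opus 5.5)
Your parts (1), (2) and the composition-factor statement follow the paper's proof: surjectivity of the block projections $\rho_i$, $N_i=\ker\rho_i$, and Corollary~\ref{cor:alternating-quotients} (that is, Theorem~\ref{thm:redundant-nielsen-lifting}) for the girth. The gap is part (3). You never prove it, and the routes you sketch rest on false claims.
\begin{itemize}
\item Blockwise finiteness of element orders cannot help, since $F_2$ is itself residually alternating.
\item There is no finitary normal subgroup with virtually abelian (or virtually $\langle\tau\rangle$) quotient. A word $w(\pi,\tau)$ lies in $D$ iff it is trivial on all large blocks. This holds iff $w(c,t)=1$ in the limit marked group $\Alt_{\mathrm{fin}}(\mathbb Z)\rtimes\langle t\rangle$, with $c=(1\,2\,3)$. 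Hence $\Gamma_0/(\Gamma_0\cap D)\cong\Alt_{\mathrm{fin}}(\mathbb Z)\rtimes\mathbb Z$, which contains the infinite simple group $\Alt_{\mathrm{fin}}(\mathbb Z)$ and so is not virtually abelian.
\item $\pi\notin D\langle\tau\rangle$: for $n_i>|m|$ the permutation $\tau_i^m$ is trivial or fixed-point-free, never a $3$-cycle. So the ``finitary correction'' you planned to try first cannot exist.
\item In step (a), $u^{\mathrm{lcm}(n_1,\dots,n_j)}$ need not be trivial on the first $j$ blocks unless $u$ is a power of $\tau$.
\end{itemize}

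The paper closes (3) in one line by citing Pyber \cite[Corollary~1.2]{MR2031168}: $\Gamma_0$ is elementary amenable. If you want a self-contained proof, the missing idea is that the locally finite normal subgroup you need is \emph{not} finitary. Let $\varepsilon\colon\Gamma_0\to\mathbb Z$ be the $\tau$-exponent sum. It is well defined: a word of length $\ell$ with exponent sum $e$ translates every point at cyclic distance at least $\ell$ from $\{1,2,3\}$ by $e$. So once $n_i>2\ell+6$, a relation forces $e=0$. The kernel $M$ contains $\pi$ and is locally finite. Indeed, take $g_1,\dots,g_r\in M$ of length at most $\ell$. On every block with $n_i$ large, each $g_j$ is supported in the $(2\ell+3)$-point window around $\{1,2,3\}$ and acts there by a permutation independent of $i$. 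Hence $\langle g_1,\dots,g_r\rangle$ embeds in $\prod_{i\le i_0}\Alt(\Omega_i)\times\operatorname{Sym}(2\ell+3)$. Since $\varepsilon(\tau)=1$, $\Gamma_0$ is locally-finite-by-$\mathbb Z$. A free subgroup $F$ meets $M$ trivially because $F$ is torsion-free, so $F$ embeds in $\mathbb Z$ and cannot be non-abelian. This argument needs neither $D\le\Gamma_0$ nor Goursat's lemma.
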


\begin{proof}
Let us recall that 
\[
    7\leq n_1<n_2<n_3<\cdots
\]
be a strictly increasing sequence of odd integers. For every $i$, let
\[
    \Omega_i
      =
    \{\omega_{i,1},\ldots,\omega_{i,n_i}\}
\]
be a set of cardinality $n_i$, and define permutations
\[
    \pi_i
      =
    (\omega_{i,1}\ \omega_{i,2}\ \omega_{i,3})
\]
and
\[
    \tau_i
      =
    (\omega_{i,1}\ \omega_{i,2}\ \cdots\ \omega_{i,n_i}).
\]
Since $n_i$ is odd, both $\pi_i$ and $\tau_i$ are even permutations,
so
\[
    \pi_i,\tau_i\in\operatorname{Alt}(\Omega_i).
\]

Set
\[
    \pi=(\pi_i)_{i\geq1},
    \qquad
    \tau=(\tau_i)_{i\geq1},
\]
and define the B.H. Neumann group as in \cite[Section 1]{MR2031168}
\[
    \Gamma_0
      :=
    \langle\pi,\tau\rangle
      \leq
    \prod_{i\geq1}\operatorname{Alt}(\Omega_i).
\]

By construction, $\Gamma_0$ is generated by the two elements
$\pi$ and $\tau$.

Let
\[
    \rho_i\colon
    \Gamma_0\longrightarrow\operatorname{Alt}(\Omega_i)
\]
be the restriction of the $i$-th coordinate projection. We claim that
$\rho_i$ is surjective. Its image contains $\pi_i$ and $\tau_i$, and
conjugating $\pi_i$ by powers of $\tau_i$ gives
\[
    \tau_i^j\pi_i\tau_i^{-j}
      =
    (\omega_{i,j+1}\ \omega_{i,j+2}\ \omega_{i,j+3}),
\]
where the indices are taken modulo $n_i$. The consecutive $3$-cycles
generate $\operatorname{Alt}(\Omega_i)$. Hence
\[
    \langle\pi_i,\tau_i\rangle
      =
    \operatorname{Alt}(\Omega_i),
\]
and therefore
\[
    \rho_i\colon
    \Gamma_0\twoheadrightarrow\operatorname{Alt}(\Omega_i)
\]
is an epimorphism.

Put
\[
    N_i:=\ker(\rho_i).
\]
Then $N_i$ is a finite-index normal subgroup of $\Gamma_0$, and the
first isomorphism theorem gives
\[
    \Gamma_0/N_i
      \cong
    \operatorname{Alt}(\Omega_i)
      \cong
    \operatorname{Alt}(n_i).
\]
Since $n_i\geq7$, every $\operatorname{Alt}(n_i)$ is a non-abelian
finite simple group. Moreover, since the $n_i$ are strictly
increasing, these groups are pairwise non-isomorphic.

It follows also that the subgroups $N_i$ are pairwise distinct.
Indeed, if $N_i=N_j$, then
\[
    \operatorname{Alt}(n_i)
      \cong
    \Gamma_0/N_i
      =
    \Gamma_0/N_j
      \cong
    \operatorname{Alt}(n_j),
\]
which would imply $n_i=n_j$.

Since the finite quotients $\operatorname{Alt}(n_i)$ have unbounded
orders, $\Gamma_0$ must be infinite. Furthermore, the group $\Gamma_0$ is elementary amenable; see
\cite[Corollary 1.2]{MR2031168}. Every subgroup of an amenable group is amenable, whereas a non-abelian free group is non-amenable. Therefore $\Gamma$ contains no non-abelian free subgroup.

Finally, the epimorphisms
\[
    \rho_i\colon
    \Gamma_0\twoheadrightarrow\operatorname{Alt}(n_i)
\]
exhibit infinitely many pairwise non-isomorphic non-abelian finite
simple quotients. Theorem \ref{thm:redundant-nielsen-lifting} implies that
\[
    \gir(\Gamma_0)=\infty.
\]

Each quotient $\Gamma_0/N_i\cong\operatorname{Alt}(n_i)$ has the
composition series
\[
    1
      \trianglelefteq
    \operatorname{Alt}(n_i),
\]
whose unique composition factor is $\operatorname{Alt}(n_i)$.
Consequently, the finite quotients $\Gamma_0/N_i$ collectively possess
infinitely many pairwise non-isomorphic non-abelian composition
factors.
\end{proof}

\subsection{Girth of finitery alternative group $\operatorname{Alt}_\mathrm{fin}(\mathbb{Z})\rtimes \mathbb{Z}$}

In this subsection, we prove that a class of finitely generated lawless groups which surjects onto $\operatorname{Alt}_\mathrm{fin}(\mathbb{Z})\rtimes \mathbb{Z}$ has infinite girth. We need the following lemmas \ref{Altfinite}, \ref{G=AltX}, and \ref{singleextension} before proving that the group $\operatorname{Alt}_\mathrm{fin}(\mathbb{Z})\rtimes \mathbb{Z}$ has infinite girth.

\begin{lemma}\label{Altfinite}
Let $X$ be a set, let $U\subseteq X$ with $|U|\ge 3$, and let $T\subseteq X$ be a three-element subset such that $U\cap T\neq\varnothing$. Let $\gamma_T\in\operatorname{Sym}(X)$ be a $3$-cycle with  $\supp(\gamma_T)=T$ (fixing $X\setminus T$ pointwise). Consider $\operatorname{Alt}_{\mathrm{fin}}(U)$ as a subgroup of $\operatorname{Sym}(X)$ by extending every permutation of $U$ to fix $X\setminus U$ pointwise. Then
$$\left\langle\operatorname{Alt}_{\mathrm{fin}}(U),\gamma_T\right\rangle
=\operatorname{Alt}_{\mathrm{fin}}(U\cup \supp(\gamma_T))
=\operatorname{Alt}_{\mathrm{fin}}(U\cup T).$$
Here, if $T=\{a,b,c\}$, then $\gamma_T$ is either $(a\,b\,c)$ or its inverse $(a\,c\,b)$.
\end{lemma}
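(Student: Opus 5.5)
The inclusion $\langle\operatorname{Alt}_{\mathrm{fin}}(U),\gamma_T\rangle\subseteq\operatorname{Alt}_{\mathrm{fin}}(U\cup T)$ is immediate. Every element of $\operatorname{Alt}_{\mathrm{fin}}(U)$ is a finitely supported even permutation supported in $U\subseteq U\cup T$. The $3$-cycle $\gamma_T$ is even and supported in $T$. So the whole work is the reverse inclusion. I will prove it by showing that the subgroup $H:=\langle\operatorname{Alt}_{\mathrm{fin}}(U),\gamma_T\rangle$ contains every $3$-cycle on $U\cup T$. Then I invoke the standard fact that, for any set $Y$, $\operatorname{Alt}_{\mathrm{fin}}(Y)$ is generated by the $3$-cycles with support in $Y$. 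This holds because a finitely supported even permutation lies in $\operatorname{Alt}(F)$ for some finite $F\subseteq Y$, and $\operatorname{Alt}(F)$ is generated by $3$-cycles.

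I will split into cases according to $|U\cap T|\in\{1,2,3\}$. If $T\subseteq U$ there is nothing to do. In the remaining cases, write $T\setminus U=\{b\}$ or $\{b,c\}$, and fix $a\in U\cap T$. The key tool is conjugation. Since $\operatorname{Alt}_{\mathrm{fin}}(U)$ acts $k$-transitively on $U$ for small $k$ (here $|U|\geq 3$, and I need enough room), conjugating $\gamma_T$ by elements of $\operatorname{Alt}_{\mathrm{fin}}(U)$ moves the point(s) of $T\cap U$ to arbitrary points of $U$ while fixing the points outside $U$.

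\emph{Case $|T\setminus U|=1$.} Here $T=\{a,a',b\}$ with $a,a'\in U$. Conjugates of $\gamma_T^{\pm1}$ give all $3$-cycles of the form $(x\,y\,b)$ with $x,y\in U$ distinct. This uses that $\operatorname{Alt}(U)$ is $2$-transitive on ordered pairs when $|U|\geq 4$. When $|U|=3$, I check directly: $(x\,y\,b)$ and its conjugate by the $3$-cycle on $U$ suffice. Any $3$-cycle on $U\cup\{b\}$ either lies in $\operatorname{Alt}_{\mathrm{fin}}(U)$ or contains $b$. In the latter case it is $(x\,y\,b)$ or its inverse $(y\,x\,b)$, hence lies in $H$.

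\emph{Case $|T\setminus U|=2$.} Here $T=\{a,b,c\}$. Conjugating $\gamma_T=(a\,b\,c)$, say, by elements of $\operatorname{Alt}(U)$ yields $(x\,b\,c)$ for every $x\in U$. Then for $x\neq y$ in $U$, the products $(x\,b\,c)(y\,c\,b)$ and their conjugates produce $3$-cycles such as $(x\,y\,b)$, $(x\,y\,c)$ up to inversion. A small explicit computation should confirm this. Then every $3$-cycle on $U\cup\{b,c\}$ is covered: those inside $U$, those meeting $\{b,c\}$ in one point, and those of the form $(x\,b\,c)^{\pm1}$.

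The main obstacle I expect is purely bookkeeping in the second case. I need to pick the right product of two conjugates of $\gamma_T$ to manufacture a $3$-cycle with two points in $U$ and one in $\{b,c\}$, and handle the smallest case $|U|=3$ without relying on high transitivity. I would first try the commutator-style product $(x\,b\,c)(y\,b\,c)^{-1}$ and compute it explicitly. If necessary I would replace it with $(x\,b\,c)\,(y\,b\,c)$ and conjugate by a suitable $3$-cycle of $U$. Once all $3$-cycles on $U\cup T$ are in $H$, the generation fact closes the proof.
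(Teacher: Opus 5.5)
Your proposal is correct and follows essentially the paper's route. Both arguments split on $|U\cap T|$ and conjugate $\gamma_T^{\pm1}$ by elements of $\operatorname{Alt}_{\mathrm{fin}}(U)$, using $2$-transitivity when $|U|\ge 4$ and a direct check when $|U|=3$. When two points of $T$ lie outside $U$, both multiply two conjugates to manufacture a $3$-cycle with two points in $U$. Your product $(x\,b\,c)(y\,b\,c)^{-1}$ does work: composing right to left it equals $(x\,b\,y)=(y\,x\,b)$, which is exactly the paper's identity $(vst)(xst)^{-1}=(xvs)$. The only organizational difference is that the paper then enlarges $U$ to $U\cup\{s\}$ and re-invokes the one-outside-point case, whereas you enumerate all $3$-cycles on $U\cup T$ directly.
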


In Lemma \ref{Altfinite}, when $U$ and $T$ are finite, the Lemma says \[\langle \operatorname{Alt}(U),\gamma_T\rangle=\operatorname{Alt}(U\cup T).\]
\begin{proof}
 By hypothesis, $|U\cap T|=1,2$ or $3$. We will use the lemma in case-by-case basis.

 Case 1: $|U\cap T|=3$: As $|T|=3$, each element of $T$ intersects with $U$ meaning $T\subseteq U$. This immediately proves the assertion as $U\cup T=U$.
\medskip

 Case 2: $|U\cap T|=2$: 
 \begin{itemize}
     \item $|U|\geq 4$
     
     \noindent Write \[T=\{u_1,u_2,t\}, \qquad u_1,u_2\in U, \qquad t\in T\backslash U.\] We can assume $\gamma_T=(u_1 u_2 t)$ (replacing $\gamma_T$ by it's inverse if necessary). Clearly, $\operatorname{Alt}_{\mathrm{fin}}(U)$ is 2-transitive on the set $U$ when $|U|\geq 4$. Therefore, there exists $\sigma\in \operatorname{Alt}_{\mathrm{fin}}(U)$ such that \[\sigma(\{u_1,u_2\})=\{r,s\}, \qquad \text{for any distinct}\hspace{0.1cm}\{r,s\}\in U\backslash T.\] Assume $\sigma=(u_1 r)(u_2 s)$ . As $\sigma$ fixes $t$, \[\sigma^{-1}\gamma_T \sigma=\sigma^{-1}(u_1 u_2 t)\sigma=(rst)\] Thus, $\langle \operatorname{Alt}_\mathrm{fin}(U), \gamma_T\rangle$ contains all 3-cycles $(rst)$ where $r\neq s$ and $r,s\in U\backslash T$ and $t\in T$. It already contains 3-cycles supported in $U$. Hence, \[\langle \operatorname{Alt}_\mathrm{fin}(U), \gamma_T\rangle=\operatorname{Alt}_\mathrm{fin}(U\cup \{t\})= \operatorname{Alt}_\mathrm{fin}(U\cup T) \]

     \item $|U|=3$ Write $T=\{u_1,u_2,t\}$ and $U=\{u_1,u_2,u_3\}$ Take $\gamma_T=(u_1u_2t)$ and $\sigma=(u_1u_2u_3)$. Then, \[\sigma^{-1}\gamma_T\sigma=(u_1 tu_3)\] and the argument follow as in the previous subcase $|U|\geq 4$.

     \noindent 
 \end{itemize}
\medskip
Case 3: $|U\cap T|=1$: Write \[T=\{u_1,s,t\}\qquad u_1\in U, \qquad s,t\in T\backslash U\] Like Case 2, assume $\gamma_T=(u_1st)$ (replacing $\gamma_T$ by $\gamma_T^{-1}$ if necessary). $\operatorname{Alt}_\mathrm{fin}(U)$ acts transitively on $U$. Indeed, for any $u,v\in U$, choose $\sigma_v=(uvw)$ for any $w\in U\backslash \{u_1,u_2\}$, hence $\sigma_v(u)=v$. Clearly, $\sigma_v\in \operatorname{Alt}_\mathrm{fin}(U)$ being a 3-cycle and the choose of $w$ is guaranteed by $|U|\geq 3. $ Therefore, by transitivity of $\operatorname{Alt}_\mathrm{fin}(U)$, there exists $\sigma_v\in \operatorname{Alt}_\mathrm{fin}(U)$ such that $\sigma_v(u_1)=v$, for $u_1$ as defined in $\gamma_T$ and any $v\in U$. Moreover, $\sigma_v$ fixes $s,t$ \[\sigma_v \gamma_T \sigma_v^{-1}=(stv)\implies \sigma_v(u_1st)\sigma_v^{-1}=(vst).\] Hence, all the 3-cycles of the form $(vst)\in \operatorname{Alt}_\mathrm{fin}(U)$ for all $v\in U$. Moreover, for distinct $x,v\in U$, \[(vst)(xst)^{-1}=(xvs)\] Hence, $(xvs)\in \operatorname{Alt}_\mathrm{fin}(U)$ for all $x\neq v$ and $x,v\in U$. Together with $\operatorname{Alt}_\mathrm{fin}(U)$, it generates 
\[\operatorname{Alt}_\mathrm{fin}(U\cup \{s\}).\] Let's come back to the original cycle $\gamma_T=(u_1st)$. As $u_1,s\in U\cup \{s\}$, apply Case 2 replacing $U$ by $U\cup \{s\}$ \[\operatorname{Alt}_\mathrm{fin}(U\cup \{s\}, \gamma_T)=\operatorname{Alt}_\mathrm{fin}{(U\cup \{s,t\})}=\operatorname{Alt}_\mathrm{fin}(U\cup T).\] This completes the proof of the lemma.
 
\end{proof}
\begin{lemma}\label{altfin(X)asproductof3cycles}
    Every permutation $\sigma\in \operatorname{Alt}_\mathrm{fin}(X)$ is a product of 3-cycles.
\end{lemma}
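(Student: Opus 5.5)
The plan is to reduce the statement to the classical finite case and then handle that case by an explicit factorization. Let $\sigma\in\operatorname{Alt}_\mathrm{fin}(X)$. By definition, $F:=\supp(\sigma)$ is finite and $\sigma$ is even. The restriction $\sigma|_F$ is then an even permutation of the finite set $F$. Any $3$-cycle supported in $F$, extended by the identity on $X\setminus F$, is a $3$-cycle of $X$. Hence it suffices to write $\sigma|_F$ as a product of $3$-cycles in $\operatorname{Sym}(F)$ and extend each factor trivially. If $\sigma=1$, we take the empty product.

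For the finite case, write $\sigma|_F$ as a product of transpositions. Because $\sigma$ is even, the number of transpositions can be taken to be even. The next step is to group them in consecutive pairs and rewrite each pair as $3$-cycles:
\[
(a\,b)(a\,c)=(a\,c\,b),
\qquad
(a\,b)(c\,d)=(a\,c\,b)(a\,c\,d),
\]
for distinct $a,b,c,d$. A pair of equal transpositions cancels. These identities are verified pointwise.

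One subtlety needs checking. If $|F|\le 2$, then $\sigma$ is either trivial or a single transposition, and the latter is odd. So a nontrivial even $\sigma$ has $|F|\ge3$, and every $3$-cycle above is supported inside $F\subseteq X$.

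There is essentially no obstacle. The only points needing care are that parity is well defined for finitary permutations, which holds because parity is taken on the finite support, and that the convention for composition in the displayed identities is fixed consistently. If a convention mismatch appears, I would replace a $3$-cycle by its inverse, which is again a $3$-cycle.
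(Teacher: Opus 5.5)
Your proposal is correct and follows the same route as the paper. Both reduce to the finite support, write $\sigma$ as an even product of transpositions, and rewrite consecutive pairs as $3$-cycles. Your disjoint-pair identity $(a\,b)(c\,d)=(a\,c\,b)(a\,c\,d)$ is literally the paper's $(a\,c\,b)(d\,a\,c)$, since $(a\,c\,d)=(d\,a\,c)$. You also treat the cases the paper leaves implicit, namely equal transpositions cancelling and $\sigma=1$ as the empty product, which makes your write-up slightly more complete.
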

\begin{proof}
    By definition, for any arbitrary $\sigma\in \operatorname{Alt}_\mathrm{fin}(X)$, $\supp(\sigma)=n<\infty$. So, it suffices to assume that $\sigma\in \operatorname{Alt}(n)$. We can write $\sigma$ as a product of an even number of transpositions.
    \medskip
    
    \noindent Case 1: If there exist transposition pairs sharing a common point\[(ab)(bc)\] then we replace each transposition pair $(ab)(bc)$ in $\sigma$ by $(abc)$.
    \medskip

    \noindent Case 2: If the transposition pairs are disjoint \[(ab)(cd)\] then replace each disjoint transposition pair $(ab)(cd)$ in $\sigma$ by $(acb)(dac)$.
    \medskip

    Since, $\sigma$ is an arbitrary element in $\operatorname{Alt}_\mathrm{fin}(X)$, we are done.
\end{proof}

\begin{lemma}\label{G=AltX}
    Let $X$ be a set, and let $\mathcal{C}$ be a collection of 3-cycles in $\operatorname{Sym}_\mathrm{fin}(X)$. If the subgroup $G=\langle \mathcal{C}\rangle$ acts transitively on $X$, then \[G=\operatorname{Alt}_\mathrm{fin}(X).\]
\end{lemma}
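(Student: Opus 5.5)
The plan is to prove the two inclusions separately. The inclusion $G\subseteq\operatorname{Alt}_\mathrm{fin}(X)$ is immediate: each $3$-cycle is an even permutation of finite support, and $\operatorname{Alt}_\mathrm{fin}(X)$ is a subgroup of $\operatorname{Sym}_\mathrm{fin}(X)$. For the reverse inclusion, Lemma \ref{altfin(X)asproductof3cycles} reduces the task to showing that every $3$-cycle $(a\,b\,c)$ with $a,b,c\in X$ lies in $G$. We may assume $\mathcal C\neq\varnothing$. If $X$ is empty or a single point the statement is trivial, and transitivity rules out $\mathcal C=\varnothing$ once $|X|\ge2$. Moreover, since the support of any element of $G$ lies in the union of the supports of the cycles in $\mathcal C$, transitivity with $|X|\ge 2$ forces every point of $X$ to lie in the support of some $\gamma\in\mathcal C$. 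In particular $|X|\geq 3$.

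The main step is an exhaustion argument driven by Lemma \ref{Altfinite}. Start with $U_0=\supp(\gamma_0)$ for some $\gamma_0\in\mathcal C$, so that $\operatorname{Alt}_\mathrm{fin}(U_0)=\langle\gamma_0\rangle\le G$ and $|U_0|=3$. Call a subset $U\subseteq X$ \emph{good} if $|U|\ge3$ and $\operatorname{Alt}_\mathrm{fin}(U)\le G$. If $U$ is good and $\gamma\in\mathcal C$ satisfies $\supp(\gamma)\cap U\neq\varnothing$, then Lemma \ref{Altfinite} shows that $U\cup\supp(\gamma)$ is good. A union of a chain of good sets is also good, because each $3$-cycle has finite support and so lies in some member of the chain. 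Let $U^\ast$ be the union of all good sets that can be reached from $U_0$ by finitely many such enlargements. Then $U^\ast$ is good, since any $3$-cycle supported in $U^\ast$ has its three points in finitely many reachable sets, and these can be merged by further enlargements. Also $U^\ast$ is closed: every $\gamma\in\mathcal C$ whose support meets $U^\ast$ has $\supp(\gamma)\subseteq U^\ast$.

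It remains to show $U^\ast=X$, and this is where transitivity is used. By closedness, every generator $\gamma\in\mathcal C$ either has support inside $U^\ast$ or disjoint from $U^\ast$. In both cases $\gamma$ maps $U^\ast$ into itself, so $U^\ast$ is a $G$-invariant subset. Since $U^\ast$ is nonempty and $G$ acts transitively on $X$, we get $U^\ast=X$. Hence $\operatorname{Alt}_\mathrm{fin}(X)\le G$, which combined with the first inclusion proves the lemma.

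The step needing the most care is the limiting argument when $X$ is infinite: enlargements must be merged, rather than performed in some fixed order. A cleaner alternative is to define $x\sim y$ if $x$ and $y$ lie in a common finite good set. One checks via Lemma \ref{Altfinite} that this is transitive whenever the two witnessing good sets overlap, since they can be merged by adding $3$-cycles one at a time. One then observes that every $3$-cycle in $\mathcal C$ lies inside a single class, and that classes are unions of supports, so transitivity again gives a single class. The remaining steps are routine applications of Lemma \ref{Altfinite}.
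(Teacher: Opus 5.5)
Your proof is correct and follows essentially the same strategy as the paper. It proves the easy inclusion by evenness, reduces to $3$-cycles via Lemma~\ref{altfin(X)asproductof3cycles}, and runs an exhaustion from the support of one generator: Lemma~\ref{Altfinite} enlarges the good set whenever a generator's support meets it, and transitivity forces this to reach all of $X$. The only difference is packaging: the paper covers each finite $F\subseteq Y$ by an ordered chain of supports $\supp(\gamma_1),\dots,\supp(\gamma_m)$ linked to $Y_0$, asserting without proof that such a chain exists, whereas your observation that the closed set $U^\ast$ is $G$-invariant supplies exactly that connectivity step cleanly.
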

\begin{proof}
    Clearly, as every element of $\mathcal{C}$ is an even permutation (every 3-cycle is a product of transpositions), it follows 
    \begin{equation}\label{inclusion}
        G\leq \operatorname{Alt}_\mathrm{fin}(X).
    \end{equation} We first show that, for every $G$-orbit $Y\subseteq X$ containing at least three elements, the group induced by $G$ on $Y$ is \[G|_Y=\operatorname{Alt}_\mathrm{fin}(Y).\] Suppose $Y$ be a $G$-orbit containing at least three elements. We show that $G$ contains every 3-cycle supported in $Y$. By Lemma \ref{altfin(X)asproductof3cycles}, it follows that \[\operatorname{Alt}_\mathrm{fin}(Y)\leq G|_Y.\] To prove the other direction, let $\gamma_0\in \mathcal{C}$ be an element supported in $Y$, say \[Y_0=\supp(\gamma_0).\]Then if $|Y_0|=3$,
    \[\langle\gamma_0\rangle=\operatorname{Alt}_\mathrm{fin}(Y_0)\leq G.\] Now, we prove the following elementary enlargement observation. If $U\subseteq Y, |U|\geq 3,$ and $\operatorname{Alt}_\mathrm{fin}(U)\leq G$, and suppose there exists $\gamma\in \mathcal{C}$ satisfying $\supp(\gamma)\cap U\neq \emptyset$, then \[\operatorname{Alt}_\mathrm{fin}(U \cup \supp(\gamma))\leq G.\] Indeed, by Lemma \ref{Altfinite}, \begin{equation}\label{equationenlargement}
        \langle\operatorname{Alt}_\mathrm{fin}(U),\gamma\rangle=\operatorname{Alt}_\mathrm{fin}(U\cup \supp(\gamma)).
    \end{equation}For any finite $F\subseteq Y$, there exist finitely many $\gamma_1.\gamma_2,\dots, \gamma_m\in \mathcal{C}$ such that \[F\subseteq (Y_0 \cup \bigcup_{i=1}^m \supp(\gamma_i))\] reordering if necessary, for all $1\leq j\leq m$, \[\supp(\gamma_j)\cap (Y_o \cup\bigcup_{i\neq j} \supp(\gamma_i))\neq \emptyset \quad 1\leq i\neq j\leq m. \] Applying equation \ref{equationenlargement} repeatedly starting with $Y_0$, we get for \[\operatorname{Alt}_\mathrm{fin}(F)\leq G|_Y.\] As $F$ was arbitrary, $G|_Y$ contains every 3-cycles on $Y$. Hence, \[\operatorname{Alt}_\mathrm{fin}(Y)\leq G|_Y.\] The reverse inclusion follows from Equation \ref{inclusion}. Therefore, \[G|_Y=\operatorname{Alt}_\mathrm{fin}(Y).\] Now, as $G$ acts transitively on $X$, there is just one $G$-orbit of $X$, hence \[G=\operatorname{Alt}_\mathrm{fin}(X).\] 
\end{proof}

\begin{remark}\label{AltfinZseminolaw}
    The group $\operatorname{Alt}_{\mathrm{fin}}(\mathbb Z)\rtimes \mathbb{Z}$ satisfies no non-trivial law follows from \cite[Proposition 2.4]{akhmedov2003girth}, where it is shown that $\operatorname{Sym}_\mathrm{fin}(\mathbb{Z})\rtimes \mathbb{Z}$ do not satisfy a non-trivial law. We see that $\operatorname{Alt}_\mathrm{fin}(\mathbb{Z})\rtimes \mathbb{Z}$ is an index two subgroup in $\operatorname{Sym}_\mathrm{fin}(\mathbb{Z})\rtimes \mathbb{Z}$ and hence can not satisfy a non-trivial law.
\end{remark}

We now prove the following elementary fact about finite permutations, which will be used to “fill the gaps” in the proof of Theorem \ref{AltZsemiZ}. In fact, this lemma shows that any partial map can be extended by adding more points until it becomes a single complete cycle covering the entire set.
\begin{lemma}\label{singleextension}
    (Extension lemma) Let $X$ be a finite, and let $p$ be an injective partial map on $X$. Suppose that $p$ has no non-trivial periodic points. Then, after adjoining finitely many new points if necessary, $p$ extends to a single finite cycle.
\end{lemma}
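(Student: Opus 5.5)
The plan is to view $p$ as a directed graph on $X$, with an edge $x\to p(x)$ for each $x$ in the domain of $p$, and to use the no-periodic-points hypothesis to show this graph is a disjoint union of finite directed paths. The cycle is then obtained by concatenating these paths end to end.

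\textbf{Step 1: structure of $p$.} Since $p$ is injective, every vertex has in-degree at most $1$. Since $p$ is a function on its domain, every vertex has out-degree at most $1$. So each connected component is either a directed path or a directed cycle.

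A directed cycle would consist of points $x$ with $p^m(x)=x$ for some $m\ge1$, i.e. periodic points. A fixed point $p(x)=x$ also counts as periodic here, so I read the hypothesis as excluding those too; I will state this reading explicitly, since the paper's ``non-trivial'' is ambiguous. With this reading, every component is a finite directed path
\[
P_j=(x_{j,0}\to x_{j,1}\to\cdots\to x_{j,\ell_j}), \qquad j=1,\dots,m,
\]
where $x_{j,0}\notin\operatorname{im}(p)$ and $x_{j,\ell_j}\notin\operatorname{dom}(p)$. Points of $X$ on which $p$ is not defined and which are not in the image appear as paths of length $0$. Finiteness of $X$ guarantees there are finitely many paths and each is finite.

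\textbf{Step 2: closing up the paths.} Define the extension $\hat p$ by setting $\hat p(x_{j,\ell_j})=x_{j+1,0}$ for $j<m$ and $\hat p(x_{m,\ell_m})=x_{1,0}$. The domain and image are both all of $X$, and $\hat p$ traverses $P_1,\dots,P_m$ in order before returning, so it is a single cycle on $X$.

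No new points are needed unless the result is too short: $|X|\le1$, or $|X|=2$ when the paper wants a cycle of some minimum length. In that case I adjoin new points $z_1,\dots,z_r$ and insert them between $x_{m,\ell_m}$ and $x_{1,0}$. If one wants the cycle to avoid creating short relations in the later girth argument, or to have odd length so that it lies in $\operatorname{Alt}$, the number $r$ of adjoined points can be chosen freely to make the total length large or odd. I would note this flexibility, since the application in Theorem \ref{AltZsemiZ} presumably needs it.

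\textbf{Step 3: checks.} It remains to verify that $\hat p$ agrees with $p$ on $\operatorname{dom}(p)$, which holds because the new values are assigned only at path endpoints, which lie outside $\operatorname{dom}(p)$. It also remains to verify that $\hat p$ is a bijection with a single orbit.

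The only real obstacle is Step 1, namely ruling out cycle components using the hypothesis. The cleanest argument is that if $x$ lies on no path, then iterating $p$ forward and backward stays inside the finite set $X$, so some iterate repeats, giving $p^a(x)=p^b(x)$. Injectivity then gives $p^{b-a}(x)=x$, which makes $x$ periodic and contradicts the hypothesis.
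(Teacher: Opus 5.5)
Your proposal is correct and follows the same route as the paper: decompose $X$ into maximal $p$-chains, use the absence of periodic points to rule out closed orbits, and link the chain ends cyclically, inserting extra points when a prescribed length or parity is needed. You are more careful than the paper on two points it leaves implicit: why the chain decomposition exists, and why each chain's initial point lies outside $\operatorname{im}(p)$ and its terminal point outside $\operatorname{dom}(p)$.
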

\begin{proof}
    Because $X$ is finite, we can arrange it's elements as finitely many disjoint sequences \[(a_{i,1},a_{i,2},\dots, a_{i,r_i}), \quad 1\leq i\leq s,\] such that \[p(a_{i,j})=a_{i,j+1}\] whenever the left-hand side is defined. The absence of periodic points ensures that none of these finite sequences closes into a cycle. 

    Define a permutation by retaining all the prescribed values of $p$ and setting \[a_{i,r_i}\mapsto a_{i+1,1} \; (1\leq i<s), \quad a_{s,r_s}\mapsto a_{1,1}.\]The resulting permutation is a single cycle that extends $p$. Additional points may be inserted into these cycles if any prescribed length and parity is required.
\end{proof}

\begin{theorem}\label{AltZsemiZ}
    Let \[H=\operatorname{Alt}_\mathrm{fin}(\mathbb{Z})\rtimes \langle t \rangle,\quad t(k)=k+1.\]Then $\gir(H)=\infty$. More precisely, for every $N\geq 1$, there exists an odd cycle \[\sigma_N\in \operatorname{Alt}_\mathrm{fin}(\mathbb{Z})\] such that $H=\langle t,\sigma_N\rangle$ and no nonempty freely reduced word of length at most $N$ in $\{t^{\pm}, \sigma_N^{\pm}\}$ represents identity.
\end{theorem}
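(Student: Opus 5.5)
The plan is to construct $\sigma_N$ as a single long odd cycle whose support is extremely sparse, so that every short word can be traced along an explicit orbit in $\mathbb Z$. Two easy reductions come first. Write $\theta\colon H\to\mathbb Z$ for the projection recording the $t$-exponent sum. A reduced word $w$ with $\theta(w)\neq0$ is non-trivial automatically. A word that is a pure power $\sigma^{j}$ with $0<|j|\le N$ is non-trivial as soon as the cycle length of $\sigma_N$ exceeds $N$. So only the finitely many reduced words $w$ with $|w|\le N$, zero $t$-exponent sum and at least one $\sigma$-letter need treatment. Enumerate them as $w_1,\dots,w_M$.

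\textbf{Step 1 (partial map).} For each $w_m$ I will define an injective partial map $p$ on a finite subset of $\mathbb Z$ and a base point $x_m$ such that $w_m$, read letter by letter (with $t$ acting as $k\mapsto k+1$ and $\sigma$ acting through $p$), moves $x_m$ to a different point. I choose the base points, and all \emph{fresh} points introduced below, from a superexponentially growing sequence, so that any two chosen points are at distance greater than $10N$ and all pairwise differences of chosen points are distinct (a Sidon-type condition).

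Follow the trajectory of $x_m$. A $t^{a}$-block with $|a|\le N$ moves the current point $y$ to $y+a$. At a $\sigma^{\pm1}$-letter I set $p(y)=z$, or $p(z)=y$ for $\sigma^{-1}$, where $z$ is a fresh point. This assignment is consistent for two reasons. First, $y$ lies within distance $N$ of a previously chosen point but is never itself a previously used domain or image point, by the gap condition. Second, since $w_m$ is reduced, two consecutive $\sigma$-letters have the same sign or are separated by a nonzero $t$-power. The final point lies within distance $N$ of the last fresh point, hence far from $x_m$, so $w_m(x_m)\neq x_m$.

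Every definition sends a point to a brand-new point, so $p$ is a disjoint union of finite chains. In particular it is injective and has no periodic points.

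\textbf{Step 2 (closing up).} By Lemma~\ref{singleextension}, $p$ extends to a single finite cycle. In doing so I adjoin further fresh points, again from the sparse sequence, so that three conditions hold:
\begin{enumerate}
\item the total length is odd and larger than $N$, so $\sigma_N$ is an even permutation and pure powers $\sigma_N^{j}$ with $0<|j|\le N$ are non-trivial;
\item the support contains one adjacent pair $\{c,c+1\}$;
\item apart from that pair, all differences between support points are distinct and exceed $2N$.
\end{enumerate}
Extending $p$ only adds values at points outside the trajectories. Hence every $w_m$ still moves $x_m$, so no nonempty reduced word of length at most $N$ is trivial.

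\textbf{Step 3 (generation).} By condition (iii), $\supp\sigma_N\cap(\supp\sigma_N+1)=\{c+1\}$. The cycles $\sigma_N$ and $t\sigma_N t^{-1}$ therefore share exactly one point, and their commutator is a $3$-cycle $\gamma$. The translates $t^k\gamma t^{-k}$ are $3$-cycles whose supports are all translates of one fixed $3$-element set, so together they act transitively on $\mathbb Z$. Lemma~\ref{G=AltX} then gives $\operatorname{Alt}_{\mathrm{fin}}(\mathbb Z)\le\langle t,\sigma_N\rangle$. Since $\sigma_N$ is even, this yields $H=\langle t,\sigma_N\rangle$.

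The main obstacle is the simultaneous bookkeeping in Steps 1 and 2. The trajectories must never collide with one another, the Sidon-type condition must survive the closing-up, and the generation trick needs exactly one overlap. I expect a superexponentially spaced choice of points, for instance points of the form $2^{2^j}$ scaled by $10N$, together with a single deliberately placed adjacent pair, to handle all three requirements at once.
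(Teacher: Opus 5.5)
Your Step 1 is sound. It differs from the paper, which embeds the whole ball $B_N\subset F_2$ into $\mathbb Z$ and gets $w(t,\sigma_N)(\beta(1))=\beta(w)$ for all short words at once; you instead discard words with non-zero $t$-exponent sum and trace one base point per remaining word.

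The gap is in Steps 2--3: condition (iii) is incompatible with Step 1. Suppose $w_m$ contains a subword $\sigma^{\pm1}t^{a}\sigma^{\pm1}$ with $0<|a|\le N$. The trajectory sits at a fresh point $z$, which lies in the domain or image of $p$, and then at $z+a$, where you define $p$ again. So both $z$ and $z+a$ lie in $\supp\sigma_N$. For $N\ge4$ the commutator $\sigma t\sigma^{-1}t^{-1}$ is on your list, and it forces an adjacent pair $\{z_1,z_1+1\}\subseteq\supp\sigma_N$ besides your deliberate pair $\{c,c+1\}$. Hence $\supp\sigma_N\cap(\supp\sigma_N+1)\supseteq\{c+1,z_1+1\}$, the commutator $[\sigma_N,t\sigma_Nt^{-1}]$ need not be a $3$-cycle, and Step 3 fails. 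Sparser fresh points cannot help, because the small differences $a$ are dictated by the words themselves.

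There is also a second, smaller gap. Even when you get a $3$-cycle with support $\{u,v,w\}$, its $t$-translates act transitively on $\mathbb Z$ only if $\gcd(v-u,w-u)=1$; translates of $\{0,2,4\}$, for example, preserve parity. Nothing in your conditions ensures this.

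The missing idea is to conjugate by a shift equal to the diameter of the support instead of by $1$. This gives exactly one overlap no matter how clustered the support is internally. The paper does this in Steps 4--6:
\begin{enumerate}
\item Place all Step 1 points in $[2,M]$ and pick $D>M$.
\item Add the links $D\mapsto0\mapsto1$ to $p$ before closing up with Lemma~\ref{singleextension}, keeping $\supp\sigma_N\subseteq[0,D]$.
\item Then $\supp\sigma_N\cap(\supp\sigma_N+D)=\{D\}$, so $[\sigma_N,t^D\sigma_Nt^{-D}]=(D\ 0\ D+1)$.
\item The $t$-conjugates of this $3$-cycle have supports $\{n,n+D,n+D+1\}$, so they link every $m$ to $m+1$, and Lemma~\ref{G=AltX} gives $\Alt_{\mathrm{fin}}(\mathbb Z)\le\langle t,\sigma_N\rangle$.
\end{enumerate}
If you replace your conditions (ii), (iii) and Step 3 with this, the rest of your argument, including the $t$-exponent-sum reduction, goes through unchanged.
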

\begin{proof}
    Fix $N\geq 1$, and let \[\mathbb{F}_2=\langle x,y \rangle\]be the free group on two generator. Consider the ball of radius $N$ in $\mathbb{F}_2$ \[B_N=\{g\in \mathbb{F}_2:|g|_{\{x,y\}}\leq N\}.\]We will construct a finite cycle $\sigma_N$ such that the action of $t$ and $\sigma_N$ on a finite subset of $\mathbb{Z}$ generates $H$ with the desired result as in the statement of the theorem.
\medskip

    \noindent \textit{Step 1: Acting $x$ as a shift on $x$-cosets in $\mathbb{F}_2$}.
Consider the action of the cyclic group $\langle x\rangle$ on the set $B_N$ by left multiplication. This action partitions the $B_N$ into a finite collection of disjoint orbits. Since $B_N$ is finite, each orbit $\mathcal{O}$ is a finite subset of a right coset $\langle x\rangle g$. More precisely, for any $g\in B_N$, the orbit containing $g$ is the set \[\mathcal{O}_g=\{x^kg\in\mathbb{F}_2: x^kg\in B_N\}.\] Because, $\mathbb{F}_2$ is a free group (which has no relations), there exist unique integers $a_g\leq 0$ and $b_g\geq 0$ such that \[\mathcal{O}_g=\{x^kg:a_g\leq k\leq b_g\}.\] In some sense, each orbit represents an "interval of integers" of the form $x^{a_{g}}g,x^{a_{g}+1}g,\dots,$ $x^{b_g}g$.

Now, let $\{\mathcal{O}_1,\mathcal{O}_2,\dots, \mathcal{O}_m\}$ represents distinct orbits of $B_N$. Define $\beta: B_N\to \mathbb{Z}$ as follows \[\beta(x^kg_i)=S_i+k,\quad a_{g_i}\leq k\leq b_{g_i}\] where the offsets $S_i$ are chosen to ensure that the intervals of integers $[S_i+a_{g_i},S_i+b_{g_i}]$ are pairwise disjoint. By construction, $\beta$ is injective and satisfies the property:
\begin{equation}\label{translationofbeta}
    \beta(xg)=\beta(g)+1
\end{equation}whenever both $g$ and $xg$ belongs to $B_N$. Moreover, we can choose the offsets $S_i$ to be sufficiently large, we may ensure that the image $\beta(B_N)\subseteq \{2,3,\dots,M\}$ for some $M\in \mathbb{N}$.
\medskip

\noindent \textit{Step 2: Encoding the $y$-action as a partial permutation.} We now define a partial map $p:\beta(B_N)\to \beta(B_N)$ that mimics the action of the generator $y$. For any $\beta(g)\in \beta(B_N),$ define: 
\begin{equation}\label{definitionofp}
    p(\beta(g))=\beta(yg)
\end{equation} provided that $yg\in B_N$. This map $p$ is well-defined injective partial permutation of $\mathbb{Z}$ because $\beta$ is injective. To ensure that $p$ can be extended to a single finite cycle, we must verify that it possesses no periodic points. Suppose \[p^r(\beta(g))=\beta(g)\] for some $r\geq 1$, with all the intermediate iterates defined, then \[\beta(y^rg)=\beta(g)\] injectivity of $\beta$ gives $y^rg=g$, and $y^r=1$, which is impossible in $\mathbb{F}_2$. Hence the map $p$ has no non-trivial periodic points. 

Choose an integer $D>M$ sufficiently large that $[0,D]$ contains as many unused points as will be needed. Extend $p$ by prescribing \begin{equation}
    D\mapsto 0\mapsto 1.
\end{equation} These points are disjoint from $\beta(B_M)\subseteq \{2,3,\dots, M\}$. The resulting partial permutation still has no non-trivial periodic points.

By Lemma \ref{singleextension}, $p$ extends to a single cycle $\sigma_N$, whose support is contained in $[0,D]$. Inserting a few points if necessary, we may assume that $\sigma_N$ is odd and $>N$. Thus, \[\sigma_N\in \operatorname{Alt}_\mathrm{fin}(\mathbb{Z}).\]Hence, by construction (equation \ref{definitionofp}), \begin{equation}\label{sigmaN}
\sigma_N(D)=0,\quad \sigma_N(0)=1, \quad \text{and}\quad\sigma_N(\beta(g))=\beta(yg),\end{equation} whenever $g,yg\in B_N$.
\medskip

\noindent\textit{Step 3: Absence of short relations.} 
We claim that \[w(t,\sigma_N)\neq 1\]for every non-trivial freely reduced word $w=w(x,y)$ of length atmost $N$. We use the usual convention that the products of the permutations are composed from right to left. Write \[w=s_1s_2\dots s_l,\quad s_i\in \{x^{\pm 1}, y^{\pm 1}\}, \quad 1\leq i\leq l,1\leq l\leq N.\]Each suffix $s_js_{j+1}\dots s_l$ is a reduced word of length atmost $N$, and therefore belongs to $B_N$. Equation \ref{translationofbeta} and \ref{sigmaN}, together with their inverse versions ($t(\beta(x^{-1}g))=\beta(xx^{-1}g)=\beta(g)\implies t^{-1}(\beta(g))=\beta(x^{-1}g)$ and similarly for $\sigma^{-1}$), give \begin{equation}
    t(\beta(g))=\beta(xg),\quad t^{-1}(\beta(g))=\beta(x^{-1}g)
\end{equation}
\begin{equation}
\sigma_N(\beta(g))=\beta(yg),\quad \sigma_N^{-1}(\beta(g))=\beta(y^{-1}g)
\end{equation}
whenever the elements $g,x^{\pm 1}g, y^{\pm 1}g\in B_N$. Thus, \[w(t,\sigma_N)(\beta(1))=\beta(w).\]
Since $w\neq 1$ and $\beta$ is injective, \[\beta(w)\neq \beta(1).\]Therefore, $w(t,\sigma_N)\neq 1.$ Hence, this proves that for every non-trival reduced words $w$ of length atmost $N$,  $w(t,\sigma_N)\neq 1.$
\medskip

\noindent \textit{Step 4: $t^D\sigma_Nt^{-D}\cap\sigma_N=\{D\}.$}
Suppose $A=\supp(\sigma_N)$. As $A\subseteq [0,D]$, we have $A+D\subseteq [D,2D]$. Therefore, \[A\cap (A+D)=\{D\}.\] Because \[\supp(t^D\sigma_N t^{-D})=t^D(\supp (\sigma_N))\subseteq[D,2D].\] Hence, we proved that $t^D\sigma_Nt^{-D}\cap \sigma_N=\{D\}.$
\medskip

\noindent \textit{Step 5: $[\sigma_N,t^D\sigma_Nt^{-D}]=(D,\;0,\;D+1)$.} Suppose $a$ and $b$ are two cycles whose support intersects exactly at $q$. Then \[[a,b]=aba^{-1}b^{-1}=(q,\;a(q),\;b(q)).\]  This follow from the fact that two cycles sharing exactly one point has a commutator that is 3-cycle. In our case, take \[a=\sigma_N,\quad b=t^D\sigma_N t^{-D}, q=D\] This prescribed values gives us, \[a(D)=\sigma_N(D)=0, \quad b(D)=t^D\sigma_N t^{-D}(0)=D+\sigma_N(0)=D+1.\] Hence, \begin{equation}\label{commutator}
    [\sigma_N,t^D\sigma_N t^{-D}]=(D\; 0\; D+1)
\end{equation}
\medskip

\noindent \textit{Step 6: $\{\sigma_N, t\}$ generates $H$ and  $\gir(H)=\infty$.} Conjugating equation \ref{commutator} by $t^n$ gives 
\begin{equation}\label{gammaDn}
    \gamma_{D,n}:=t^n[\sigma_N,t^D\sigma_N t^{-D}]t^{-n}=(D+n,\; n,\; D+n+1)\in \langle \sigma_N, t\rangle. 
\end{equation} Plugging in $D=1$ gives the 3-cycles set \[\mathcal{C}=\{\gamma_{1,n}=(n+1,\; n,\; n+2): n\in \mathbb{Z}\}.\] $\langle\mathcal{C}\rangle$ acts transitively on $\mathbb{Z}$, this is because the $\supp(\gamma_{1,n})=\{n, n+1, n+2\}$ connect every integer to every other integer in a finitely many steps. Hence, Lemma \ref{G=AltX} gives 
\begin{equation}\label{gammaonen}
    \langle \gamma_{1,n}:n\in \mathbb{Z} \rangle=\operatorname{Alt}_\mathrm{fin}(\mathbb{Z}).
\end{equation} Equations \ref{gammaonen} and \ref{gammaDn} gives \[\operatorname{Alt}_\mathrm{fin}(\mathbb{Z})\leq \langle t, \sigma_N\rangle\] Since, $t\in \langle t,\sigma_N \rangle$, we obtain \[\operatorname{Alt}_\mathrm{fin}(\mathbb{Z})\rtimes \langle t\rangle=\langle t,\alpha_N\rangle=H.\] Thus, $\{t,\sigma_N\}$ generates $H$ and no reduced word of length at most $N$ represents identity, which implies \[\gir(H,\{t,\sigma_N\})>N\]As our choose of $N$ is arbitrary, this implies $\gir(H)=\infty$.

\end{proof}
\begin{remark}\label{remarksurjection}
    It is not difficult to show that $\Gamma_0=\langle \pi, \tau\rangle$ surjects onto $\operatorname{Alt}_\mathrm{fin}(\mathbb{Z})\rtimes \langle t\rangle=\langle c,t \rangle$ via $\pi\mapsto c$ and $\tau\mapsto t$, where $c=(0\;1\;2)$. As $\gir(\operatorname{Alt}_\mathrm{fin}(\mathbb{Z})\rtimes \langle t\rangle)=\infty$ by Theorem \ref{AltZsemiZ} and that would give another proof of the fact that $\gir(\Gamma_0)=\infty$ using Akhmedov quotient principle \cite[Proposition 1.1]{akhmedov2005girth}.
\end{remark}

\section{The Construction of $B_f$ and Its Girth Properties}

\noindent An element $\gamma\in \Gamma$ is said to be \textit{detected} by a finite quotient $Q$ of $\Gamma$ if there is a homomorphism $\phi: \Gamma \to Q$ such that $\phi(\gamma)\neq e$. The \textit{residual growth function} $F_{X,\Gamma}(n)$ is defined to be the minimal natural number $N$ such that any word $w$ of length $\leq n$ with respect to a generating set $X$ is detected by some finite quotient $Q$ of size at most $N$.

\begin{theorem} \cite[Theorem 1]{BouRabeeSeward+2016+199+204}\label{Bou}
    For any function $f:\mathbb{N}\to \mathbb{N}$, there exists a residually finite group $B_f$ and a two-element generating set $X$ for $B_f$ such that $F_{B_{f},X}(n)\geq f(n)$ for all $n\geq 8$.
\end{theorem}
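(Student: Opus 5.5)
Since this theorem is quoted from \cite{BouRabeeSeward+2016+199+204}, I would reconstruct it with the same block-permutation approach used for the B.H. Neumann group above. The group $B_f$ will be a two-generated subgroup $\langle a,b\rangle$ of an unrestricted product $\prod_j \Alt(n_j)$, with $a=(a_j)$, $b=(b_j)$ and $\langle a_j,b_j\rangle=\Alt(n_j)$ for every $j$. Such a group is automatically residually finite, because the coordinate projections $\rho_j$ separate points. The aim is to arrange that for every $n\geq 8$ there is a word $u_n$ of length at most $n$ such that $u_n(a,b)$ is supported in a \emph{single} block $j(n)$, where it is non-trivial, and that
\[
|\Alt(n_{j(n)})| = n_{j(n)}!/2 \geq f(n).
\]

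Assuming this can be arranged, the lower bound on $F_{B_f,X}$, with $X=\{a,b\}$, follows from a structural argument. Put $g=u_n(a,b)$ and $j=j(n)$. Since $\rho_j$ maps $B_f$ onto $\Alt(n_j)$, conjugating $g$ by elements of $B_f$ gives all conjugates of $\rho_j(g)$ inside block $j$. As $\Alt(n_j)$ is simple, the normal closure of $g$ in $B_f$ is exactly the copy of $\Alt(n_j)$ sitting in coordinate $j$, so $\Alt(n_j)\trianglelefteq B_f$. Now let $\phi\colon B_f\to Q$ be any homomorphism to a finite group with $\phi(g)\neq 1$. Then $\ker\phi\cap \Alt(n_j)$ is a proper normal subgroup of the simple group $\Alt(n_j)$, so it is trivial. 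Hence $\Alt(n_j)$ embeds in $Q$, and therefore
\[
|Q|\geq n_j!/2\geq f(n).
\]
Because $|g|_X\leq n$, this gives $F_{B_f,X}(n)\geq f(n)$ for every $n\geq 8$.

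The order of construction is then as follows. First fix the words $u_n$; the threshold $n\geq 8$ suggests short commutator-type words, for instance $u_8=[a,b]^2$ or $[a,bab^{-1}]$ and longer iterated commutators for larger $n$. Next choose block sizes $n_j$ inductively, growing fast enough that $n_{j(n)}!/2\geq f(n)$. Finally, choose generating pairs $(a_j,b_j)$ of $\Alt(n_j)$ such that $u_n(a_j,b_j)=1$ whenever $j\neq j(n)$, and $u_{n}(a_j,b_j)\neq 1$ when $j=j(n)$.

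The main obstacle is this last step, because each block must satisfy infinitely many prescribed vanishing conditions simultaneously. My first attempt would take the words from a descending verbal filtration. For example, let $u_n$ lie in a verbal subgroup $V_n$ of $F_2$, with $V_n$ decreasing, chosen so that $u_n$ is a law in every $\Alt(n_i)$ with $i<j(n)$; this is possible since finitely many finite groups jointly satisfy a law. For the later blocks $i>j(n)$, I would choose $(a_i,b_i)$ generating $\Alt(n_i)$ while satisfying $u_n(a_i,b_i)=1$ for every earlier index $n$. This could be done, for instance, by taking $a_i$ and $b_i$ with carefully chosen supports so that the relevant commutators have disjoint supports and hence vanish, while a Neumann-type pair (a $3$-cycle together with a long cycle, cf.\ the proof of Theorem~\ref{thm:neumann-composition-factors}) still generates. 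Controlling the interaction between these two requirements is where I expect the real combinatorial work, and the case analysis, to lie.
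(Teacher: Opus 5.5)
Your second paragraph is correct and is exactly the mechanism behind the cited result. If a word of length at most $n$ evaluates to a non-trivial element supported in a single coordinate $j$ with $n_j!/2\ge f(n)$, then its normal closure is the coordinate copy of $\Alt(n_j)$, and every finite quotient detecting it contains $\Alt(n_j)$.

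The gap is that the construction, which is the whole content of the theorem, is left open. Moreover, the one concrete mechanism you propose cannot work for general $f$. You want $u_n$ to vanish in every block $i<j(n)$ by being a law of $\Alt(n_i)$. But a non-trivial reduced word of length $L$ is not a law of $\Alt(m)$ for $m\ge L+3$. To see this, realize its $L+1$ suffixes as points and extend the partial actions to permutations, as in the proof of Theorem~\ref{AltZsemiZ}; then correct parity using two extra points. So your scheme forces $n_i\le n+2$ for every $i<j(n)$.

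Now take $f(m)>(m+3)!$. Then $n_{j(n)}\to\infty$, so infinitely many blocks occur as values of $j$. Let $n'$ be least with $j(n')>j(8)$. Then $n'\ge 9$ and $j(n'-1)\le j(8)<j(n')$. Applying the bound above with $n=n'$ and $i=j(n'-1)$ gives
\[
f(n'-1)\le n_{j(n'-1)}!/2\le (n'+2)!/2,
\]
which is a contradiction. So for fast-growing $f$ the earlier blocks must be much larger than the word lengths. Vanishing there has to come from the specific generators, not from laws. The words and the blocks must also be chosen in an interleaved induction, rather than fixing all the words first.

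Your remark about disjoint supports for the later blocks is in the right spirit. In the Bou-Rabee--Seward construction (recorded after the statement) it is used for every block, via a single one-parameter family of words
\[
w_p=[s^{p}ts^{-p},t],
\]
of length $4p+4$. Here $t$ is the $3$-cycle $\beta$ in every coordinate, and $s$ acts as $\alpha_{d(k)}^{q(k)}$ in coordinate $k$.

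The conjugate $\alpha_d^{m}\beta\alpha_d^{-m}=(m+1,m+2,m+3)$ commutes with $\beta$ exactly when $m\not\equiv\pm1,\pm2\pmod d$. Hence the $i$-th coordinate of $w_p$ is trivial exactly when $p\,q(i)\not\equiv\pm1,\pm2\pmod{d(i)}$. The infinitely many vanishing conditions thus become congruence conditions, handled as follows.

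\begin{itemize}
\item In block $k$: setting $d(k)=p(k)q(k)+2$ gives $s_k^{p(k)}=\alpha_{d(k)}^{-2}$, so $w_{p(k)}$ is non-trivial there.
\item In blocks $i>k$: one has $3\le p(k)q(i)<d(i)-2$, so vanishing is automatic.
\item In blocks $i<k$: the congruence is one of the defining requirements on the sequences $p,q,d$.
\end{itemize}

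The word length is governed by $p(k)$, while the block size is governed by the free parameter $q(k)$. This decoupling is what makes $d(k)>2f(4p(k+1)+4)$ achievable for arbitrary $f$. The choice $p(1)=1$ accounts for the threshold $n\ge 8$.
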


The proof of Theorem \ref{Bou} relies on the construction of a finitely generated infinite group that embeds a sequence of alternating groups. We adopt the notation and terminology of \cite{BouRabeeSeward+2016+199+204}. Recall that for odd $n \geq 5$, the alternating group $\Alt(n)$ is simple and generated by the pair $\{\alpha_n, \beta\}$, where $\alpha_n = (1, 2, \dots, n)$ and $\beta = (1, 2, 3)$. It is easily verified that the conjugate $\alpha_n^{m} \beta \alpha_n^{-m}$ is the $3$-cycle $(m+1, m+2, m+3)$, and the commutator $[\alpha_n^{m} \beta \alpha_n^{-m}, \beta]$ is trivial if and only if $m \not\equiv \pm 1, \pm 2 \pmod{n}$.

Let $f \colon \mathbb{N} \to \mathbb{N}$ be an increasing function such that $f(n) \geq n$. We define sequences $p, q, d \colon \mathbb{N} \to \mathbb{N}$ such that $p$ is strictly increasing and odd with $p(1)=1$; $q(k)>2$ is odd; $d(k)=p(k)q(k)+2$ is odd and satisfies $d(k) > 2f(4p(k+1)+4)$; and for all $i \leq k$, $p(k+1)q(i) \not\equiv \pm 1, \pm 2 \pmod{d(i)}$. 

Let \[G = \prod_{k \geq 1} \Alt(d(k)),\] and for each $k$, let $G_k\cong \Alt(d(k))$ denote the canonical subgroup supported in the $k$-th coordinate, with $\pi_k \colon G \twoheadrightarrow \Alt(d(k))$ the corresponding projection. We define the elements $s, t \in G$ by:
\[ s = (\alpha_{d(k)}^{q(k)})_{k\geq 1}, \quad t = (\beta, \beta, \dots). \]
The group $B_f$ is defined as the subgroup $\langle s, t \rangle \leq G$. Since $B_f$ embeds into a direct product of finite groups, it is residually finite.

We recall that non-abelian free groups are residually alternating of odd degree \cite{katz1969residual}.

\begin{proposition}\label{nolawinBf}
    The group $B_f$ does not satisfy a non-trivial law.
\end{proposition}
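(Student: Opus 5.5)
The plan is to reduce to Proposition~\ref{nolawforfinitesimple}. Concretely, I will show that $B_f$ surjects onto infinitely many pairwise non-isomorphic non-abelian finite simple groups, namely the $\Alt(d(k))$. Consider the coordinate projections $\pi_k\colon G\twoheadrightarrow \Alt(d(k))$, restricted to $B_f$.

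The image of $\pi_k|_{B_f}$ is the subgroup generated by
\[
\pi_k(s)=\alpha_{d(k)}^{q(k)}\quad\text{and}\quad \pi_k(t)=\beta .
\]
The key step is to show these two elements generate $\Alt(d(k))$.

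\emph{The power of the long cycle.} Since $d(k)=p(k)q(k)+2$, any common divisor of $q(k)$ and $d(k)$ divides $2$. Both numbers are odd, so $\gcd(q(k),d(k))=1$. Hence $\alpha_{d(k)}^{q(k)}$ is again a full $d(k)$-cycle, and some power of it equals $\alpha_{d(k)}$. Therefore $\langle \pi_k(s),\pi_k(t)\rangle\supseteq\langle\alpha_{d(k)},\beta\rangle$.

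\emph{Generation.} The conjugates $\alpha_{d(k)}^{m}\beta\alpha_{d(k)}^{-m}=(m+1,\,m+2,\,m+3)$ give all consecutive $3$-cycles. As in the proof of Theorem~\ref{thm:neumann-composition-factors}, they generate $\Alt(d(k))$. Alternatively, the subgroup they generate acts transitively, so Lemma~\ref{G=AltX} applies.

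\emph{Simplicity and distinctness.} Since $d(k)$ is odd, both generators are even, so each $\pi_k|_{B_f}$ is an epimorphism onto $\Alt(d(k))$. We have $d(k)\geq 5$: indeed $q(k)\geq 3$ and $p(k)\geq1$ give $d(k)\geq 5$, and in fact $d(k)>2f(\cdot)\geq$ large. So each $\Alt(d(k))$ is non-abelian simple. Moreover $d$ is strictly increasing, because $d(k)>2f(4p(k+1)+4)\geq 4p(k+1)+4$ while $d(k+1)$ is large as well. If strict monotonicity is awkward, note instead that $d(k)\geq 2f(4p(k+1)+4)\to\infty$, since $p$ is strictly increasing and $f(n)\geq n$. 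Then infinitely many distinct values of $d(k)$ occur, giving infinitely many pairwise non-isomorphic quotients.

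Proposition~\ref{nolawforfinitesimple} then yields that $B_f$ satisfies no non-trivial law. As a cross-check, one could also invoke the fact that $F_2$ is residually alternating of odd degree \cite{katz1969residual}, but that is unnecessary here. The main (mild) obstacle is the coprimality and generation check above; everything else is bookkeeping.
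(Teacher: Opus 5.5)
Your proof is correct, and it shares its skeleton with the paper's: a law on $B_f$ descends to the alternating quotients $\pi_k(B_f)$, and no non-trivial word is a law in infinitely many alternating groups. The ingredients differ, though.

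The paper first reduces to two-variable laws. It then invokes the Katz--Magnus theorem that $F_2$ is residually alternating of odd degree. It also takes surjectivity of $\pi_k|_{B_f}$ for granted; this is only quoted later, from Bou-Rabee--Seward, as $G_k\le B_f$.

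You instead prove surjectivity directly. Since $d(k)-p(k)q(k)=2$ and $q(k)$ is odd, $\gcd(q(k),d(k))=1$, so $\alpha_{d(k)}$ is a power of $\alpha_{d(k)}^{q(k)}$, and the consecutive $3$-cycles then generate $\Alt(d(k))$. You then replace Katz--Magnus by Proposition~\ref{nolawforfinitesimple}, i.e.\ by Jones's theorem. The two-variable reduction is indeed superfluous.

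A benefit of your route is that it makes explicit a step the paper's write-up compresses. The paper sets one particular map $\pi_k\circ\phi$ killing $w'$ against the existence of some $\psi\colon F_2\to\Alt(n)$ not killing it. That is a contradiction only once one uses, as you do, that $w$ is a law in every quotient $\Alt(d(k))$.

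The price is a heavier black box than needed. For alternating groups you can bypass both Jones and Katz--Magnus. If $w\neq 1$ survives in a finite quotient $F$ of the free group with $|F|=m$, then $F\hookrightarrow S_m\hookrightarrow\Alt(m+2)\le\Alt(n)$ shows $w$ is not a law in $\Alt(n)$ for any $n\ge m+2$.

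One small correction: the inequality $d(k)>2f(4p(k+1)+4)$ says nothing about $d(k+1)$ versus $d(k)$, so your claim that $d$ is strictly increasing is unjustified. Your fallback, unboundedness (e.g.\ $d(k)\ge 3p(k)+2\to\infty$), is correct and is all you need.
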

\begin{proof}
By \cite{MR2705763, Schleimer2000}, if $B_f$ satisfies a non-trivial law in $k$ variables, it must satisfy one in two variables. Suppose $B_f$ satisfies a non-trivial law $w(x,y) = 1$ for some $w \in \mathbb{F}_2$. There exists a surjective homomorphism $\phi: \mathbb{F}_2 \twoheadrightarrow B_f$, and thus a non-trivial word $w' \in \mathbb{F}_2$ such that $\phi(w') = w(x,y) = 1$. However, since $\mathbb{F}_2$ is residually $\Alt(2n+1)$ \cite{katz1969residual}, for $n \geq M$, there exists $\psi: \mathbb{F}_2 \to \Alt(n)$ such that $\psi(w') \neq 1$. This contradicts the fact that for sufficiently large $k$, the composition $\pi_k \circ \phi: \mathbb{F}_2 \to \Alt(d(k))$ maps $w'$ to $1$.
\end{proof}

We now invoke Theorem \ref{thm:redundant-nielsen-lifting} to show that BouRabee-Seward group $B_f$ has infinite girth. Moreover, we show that $B_f$ admits a proper finite-index subgroup with infinitely many pairwise non-isomorphic non-abelian finite simple sections, a condition weaker than the one mentioned in Question \ref{compositionfactor}.
\begin{theorem}[Proper finite simple sections]
\label{thm:proper-finite-simple-sections}
The BouRabee-Seward group $B_f$ contains a proper finite-index normal subgroup $H\trianglelefteq B_f$ satisfying:
\begin{enumerate}
    \item $\operatorname{girth}(B_f)=\infty$;
       \item There are finite subgroups $1\neq N_i\trianglelefteq K_i<H$ such that $K_i/N_i \cong \Alt(n_i)$ for pairwise non-isomorphic non-abelian finite simple groups $\Alt(n_i)$.
\end{enumerate}
Moreover, each $\Alt(n_i)$ occurs as a proper subgroup of $H$.
\end{theorem}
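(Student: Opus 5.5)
The plan splits into the girth statement and the structural statement about $H$.

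\textbf{Girth.} First I will show that each coordinate projection $\pi_k|_{B_f}\colon B_f\to\Alt(d(k))$ is surjective. Its image contains $\pi_k(s)=\alpha_{d(k)}^{q(k)}$ and $\pi_k(t)=\beta$. If $\gcd(q(k),d(k))=1$, then $\alpha_{d(k)}^{q(k)}$ is again a full $d(k)$-cycle, and this holds because $d(k)=p(k)q(k)+2$ with $q(k)$ odd gives $\gcd(q(k),d(k))=\gcd(q(k),2)=1$. Relabelling the points so that this cycle becomes the standard one, the conjugates of $\beta$ under its powers are $3$-cycles whose supports link up along the cycle. By Lemma~\ref{G=AltX}, or directly by Lemma~\ref{Altfinite} applied repeatedly, they generate $\Alt(d(k))$, since $\langle \alpha^{q},\beta\rangle$ is transitive and generated by $3$-cycles together with a $d(k)$-cycle. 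The $d(k)$ are strictly increasing, because $p$ is strictly increasing and $d(k)>2f(\cdot)$ grows. So $B_f$ has infinitely many pairwise non-isomorphic alternating quotients $\Alt(d(k))$. Corollary~\ref{cor:alternating-quotients}, which rests on Theorem~\ref{thm:redundant-nielsen-lifting}, then gives $\gir^{=}_3(B_f)=\infty$, and hence $\gir(B_f)=\infty$.

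\textbf{The subgroup $H$.} I will take $H=\ker(\pi_1|_{B_f})$. It is a proper normal subgroup of finite index, since $B_f/H\cong\Alt(d(1))\neq1$. Because $\pi_1|_{B_f}$ is onto, the index is $|\Alt(d(1))|$. For the sections, the natural idea is to show that $H$ contains the coordinate subgroups $G_k$ for $k\ge2$, or at least large pieces of them. The construction of Bou-Rabee--Seward is designed exactly so that the congruence conditions $p(k+1)q(i)\not\equiv\pm1,\pm2\pmod{d(i)}$ make $[s^{p(k+1)}ts^{-p(k+1)},t]$ vanish in the coordinates $i\le k$ but not in coordinate $k+1$. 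Using this, together with normal closure and simplicity of $\Alt(d(k+1))$, I expect that $G_{k+1}\le B_f$ for each $k\ge1$, where $G_{k+1}$ is embedded as the subgroup supported in coordinate $k+1$; this is the content of \cite{BouRabeeSeward+2016+199+204}. Such elements project trivially to coordinate $1$, so $G_{k}\le H$ for all $k\ge2$.

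Then I will set $K_i:=G_{i+1}\times G_{i+2}\le H$ and $N_i:=G_{i+2}$. These are finite, $1\neq N_i\trianglelefteq K_i$, $K_i<H$, and $K_i/N_i\cong\Alt(d(i+1))$. The groups $\Alt(n_i)$ with $n_i=d(i+1)$ are pairwise non-isomorphic and simple because $d(i+1)\ge7$. Each $\Alt(n_i)\cong G_{i+1}$ is a proper subgroup of $H$, since $H$ is infinite, containing infinitely many $G_k$.

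\textbf{Main obstacle.} The hardest step is verifying $G_{k+1}\le B_f$ from the congruence conditions. I need the commutator $c=[s^{p(k+1)}ts^{-p(k+1)},t]$ to be trivial in coordinates $i\le k$ but nontrivial in coordinate $k+1$. Its behaviour in the coordinates $i>k+1$ must then be handled as well. Rather than redo this computation, I would cite the corresponding lemma of \cite{BouRabeeSeward+2016+199+204}, which shows $\bigoplus_k G_k\le B_f$. If that lemma is unavailable, the fallback is a cruder choice. Instead of taking $K_i$ as a product of coordinates, I would let $K_i$ be the finite group $B_f\cap\prod_{j\le i+1}G_j$ restricted to coordinates $2,\dots,i+1$, if it is finite and contained in $H$, and extract $\Alt(d(i+1))$ as a quotient by the kernel of projection to coordinate $i+1$.
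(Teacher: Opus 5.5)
Your proposal is correct and follows essentially the paper's route: surjective coordinate projections combined with the Nielsen-lifting theorem (through Corollary~\ref{cor:alternating-quotients}) give the girth, $H$ is the kernel of one coordinate projection, and $K_i$ is a product of two further coordinate subgroups $G_k\le B_f$ with $N_i$ one of the factors, where $G_k\le B_f$ is cited from \cite{BouRabeeSeward+2016+199+204} exactly as the paper does. The one slip is that strict monotonicity of $d(k)$ does not follow from the stated constraints; only unboundedness does, via $d(k)>2f(4p(k+1)+4)\ge 8p(k+1)+8\to\infty$, so, as the paper does, first pass to a subsequence of coordinates with pairwise distinct $d(k)$ before forming the $K_i$ (your fallback paragraph is not needed, and as written it would not yield the surjection onto coordinate $i+1$).
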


\begin{proof}
Let $\Gamma = B_f$ as defined above. Recall the definitions:

\[s = (s_1, s_2, s_3, \dots) \; \text{and}\; t = (\beta, \beta, \beta, \dots)\]
where $s_k = \alpha_{d(k)}^{q(k)}, \beta = (1, 2, 3),$ and $\alpha_{d(k)}=(1, 2, 3, \dots, d(k))$.

For every $k\geq1$, the coordinate subgroup $G_k \leq B_f$ by \cite[proof of Theorem~1]{BouRabeeSeward+2016+199+204}, so the projection $\pi_k\colon B_f\twoheadrightarrow\Alt(d(k))$ is an epimorphism. Since $d(k)$ is unbounded, we choose a subsequence $m_1 < m_2 < \cdots$ such that the groups $\Alt(d(m_i))$ are pairwise non-isomorphic.

The coordinate epimorphisms $\pi_{m_i}$ and Theorem \ref{thm:redundant-nielsen-lifting} imply that $\operatorname{girth}^{=}_k(B_f)=\infty$ for $k \geq d(B_f)+1$, and thus $\operatorname{girth}(B_f)=\infty$.

We now construct the required proper finite index subgroup $H$. Choose an index $m_1$ from the subsequence above and define \[H := \ker(\pi_{m_1}),\] where $\pi_{m_1} : B_f\twoheadrightarrow \operatorname{Alt}(d(m_1))$. Then $H$ is a proper finite-index normal subgroup of $B_f$. For all $j \geq 2$, the coordinate subgroups $G_{m_j}$ are supported outside the $m_1$-st coordinate, so $G_{m_j} \leq H$. Thus $H$ contains infinitely many pairwise non-isomorphic non-abelian finite simple subgroups.

To realize these as genuinely proper sections, divide the subsequence $\{m_j\}_{j \geq 2}$ into pairs. For $i \geq 1$, let $A_i = G_{m_{2i}}$ and $C_i = G_{m_{2i+1}}$. Since they are supported in distinct coordinates, they commute and $A_i \cap C_i = \{1\}$. Setting $K_i = A_i \times C_i$ and $N_i = C_i$, we have $1 \neq N_i \trianglelefteq K_i < H$ and $K_i/N_i \cong (A_i\times C_i)/C_i \cong A_i \cong \Alt(d(m_{2i}))$. Because $H$ is infinite as it contains $G_{m_j}$ of unbounded order and $K_i$ is finite in $H$, the subgroup $K_i$ is proper in $H$. Finally, each $A_i \cong\operatorname{Alt}(d(m_{2i}))$ is a proper finite subgroup of the infinite group $H$, completing the proof.
\end{proof}







\section{Marked finite quotient}\label{marked}
In this section, we introduce the notion of marked finite quotients. In this setting, the generating set in the quotient must be the image of a generating set coming from the parent group. Specifically, we require that:

\begin{align*}
(\ast) \qquad \forall N \ge 1, \exists\, S_N \subset \Gamma \text{ finite generating set, a finite group}\;Q_N,\;\text{and}\text{ a surjective}\\ \text{homomorphism } \pi_N \colon \Gamma \twoheadrightarrow Q_N
\text{ such that }
\operatorname{girth}(Q_N, \pi_N(S_N)) > N. 
\end{align*}

Under this marked interpretation, we prove that if the quotient generating set comes from the generating set of the group $\Gamma$, then we have an affirmative answer to Question \ref{pyberresidualfiniteness1}.  

\begin{theorem}\label{quoitientfinitenessgirth}
    Let $\Gamma$ be a finitely generated group. Suppose $\Gamma$ satisfies the condition $(\ast)$, then $\operatorname{girth}(\Gamma) = \infty$.
\end{theorem}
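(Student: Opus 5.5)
The plan is to apply the quotient-girth inequality of Lemma~\ref{lem:nielsen-quotient-lifting}(3), with $\varphi$ the identity, directly to the marked data supplied by $(\ast)$. Fix $N\geq 1$, and let $S_N$, $Q_N$ and $\pi_N$ be as in $(\ast)$. Enumerate $S_N=\{s_1,\ldots,s_k\}$ as a generating $k$-tuple $\mathbf{s}$. Its image $\pi_N(\mathbf{s})=(\pi_N(s_1),\ldots,\pi_N(s_k))$ is a generating tuple of $Q_N$, and by hypothesis it has marked girth greater than $N$.

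The key step is the following observation. Suppose $w\in F_k$ is a non-trivial freely reduced word of length at most $N$ with $w(\mathbf{s})=1$ in $\Gamma$. Applying the homomorphism $\pi_N$ gives $w(\pi_N(\mathbf{s}))=\pi_N(w(\mathbf{s}))=1$ in $Q_N$. This is a relation of length at most $N$ in the marked quotient, which contradicts $\gir(Q_N,\pi_N(S_N))>N$. Hence $\gir(\Gamma,S_N)\geq \gir(Q_N,\pi_N(S_N))>N$. Since $S_N$ is a finite generating set of $\Gamma$, we get $\gir(\Gamma)\geq\gir(\Gamma,S_N)>N$, and letting $N\to\infty$ yields $\gir(\Gamma)=\infty$.

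The only point needing care is the difference between girth of a generating \emph{set} (as a Cayley graph) and marked girth of a \emph{tuple}. Distinct elements of $S_N$ could map to the same element of $Q_N$, or to $1$. Such a collision would, however, create a relation of length at most $2$ in the quotient, namely $a_ia_j^{-1}$ or $a_i$. So for $N\geq 2$ the map $\pi_N$ is injective on $S_N$ and avoids the identity, and the set and tuple notions agree on both sides.

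Similarly, a cycle of length $\ell\le N$ in $\operatorname{Cay}(\Gamma,S_N)$ corresponds to a non-trivial reduced word of length $\ell$ evaluating to $1$. Its image is then a cycle or relation in $\operatorname{Cay}(Q_N,\pi_N(S_N))$, so the same argument applies. I expect this bookkeeping, rather than any substantive difficulty, to be the main thing to write carefully. Residual finiteness is not needed for this direction.
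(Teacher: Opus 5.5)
Your proof is correct and is essentially the paper's argument: every relation of $S_N$ in $\Gamma$ pushes forward under $\pi_N$, so $\gir(\Gamma,S_N)\geq\gir(Q_N,\pi_N(S_N))>N$; the paper phrases this as a contradiction with an assumed finite value $M=\gir(\Gamma)$, while you argue directly. Your collision remark is valid because you read $(\ast)$ in the marked (tuple) sense, as the paper's proof implicitly does; if $\pi_N(S_N)$ were treated as an unmarked set, collisions would create no short relations and the statement would fail (e.g.\ $\mathbb{Z}^2\to\mathbb{Z}/p\mathbb{Z}$, $(m,n)\mapsto m+n$, with the standard basis).
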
 
\begin{proof}
   To the contrary, suppose $\gir(\Gamma)=M<\infty$. Then by definition of girth, for all finite generating set $S$ of $\Gamma$, we have \[\gir(\Gamma,S)\leq M.\] Taking $N>M$, by the hypothesis, there exists finite generating set $S_N$ of $\Gamma$ and surjective $\pi_N:\Gamma\to Q_N$ such that \[\gir(Q_N,\pi_N(S_N))>N,\]
 we claim that $\gir(Q_N,\pi_N(S_N))\leq \gir(\Gamma,S_N)$.

 Indeed, every relations in $\Gamma$ (with respect to $S_N$) remains a relation under $\pi_N$ in $Q_N$. More precisely, consider a relation in $\Gamma$ \[w(s_1,s_2,\dots,s_k)=1,\] applying $\pi_N,$ we have \[1=\pi_N(1)=\pi_N(w(s_1,s_2,\dots,s_k))=w(\pi_N(s_1)\pi_N(s_2)\dots\pi_N(s_k)),\] which is a relation in $Q_N$ with respect to the generating set $\pi_N(S_N)$. However,  finite quotient $Q_N$ could possibly have shorter relations which are not in the relations coming from $\Gamma$, therefore we must have \[N< \gir(Q_N,\pi_N(S_N))\leq \gir(\Gamma,S_N)\leq M\] which is a contradiction to the fact that $N>M$.
\end{proof}

\begin{remark}
    Let us note that the statement in Theorem \ref{quoitientfinitenessgirth} would be false if it were formulated as: “If, for every $N\in\mathbb{N}$, there exists a finite quotient $Q_N$ such that $\gir(Q_N)>N$, then $\gir(\Gamma)=\infty$.” Indeed, by the definition of girth, there exists a generating set $T_N$ of $Q_N$ with arbitrarily large girth, but there need not exist a generating set $S_N$ of $\Gamma$ such that $T_N=\pi_N(S_N)$. This is precisely what happens for $\mathbb{Z}^2$ in Theorem \ref{counterexample}.
\end{remark}

\begin{remark}
    We also note that Theorem \ref{quoitientfinitenessgirth} does not contradict the proof of \cite[Proposition 1.1]{akhmedov2005girth}, where it is shown that if $\gir(G)=m$, then $\gir(G/N)=Lm$ for some constant $L$ depending on $m$, for a normal subgroup $N\leq G$.
\end{remark}

The converse to the Theorem \ref{quoitientfinitenessgirth} holds for the class of residually finite groups. The following marked finite-quotient criterion is an elementary consequence of residual finiteness and the characterization of infinite girth through asymptotic freeness due to Arzhantseva \cite[Definition 3.1 and Theorem 3.3]{arzhantseva2014asymptotic}.

\begin{theorem} \label{markedfinitequotient}(Marked finite quotient criterion)
    Let $\Gamma$ be a finitely generated residually finite group. The following are equivalent:
    \begin{enumerate}
        \item $\gir(\Gamma)=\infty$.
        \item  $\Gamma$ satisfies the condition $(\ast)$.
    \end{enumerate}
\end{theorem}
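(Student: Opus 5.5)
(2)$\Rightarrow$(1) is exactly Theorem~\ref{quoitientfinitenessgirth}, which needs no residual finiteness. So all the work is in (1)$\Rightarrow$(2), and the plan is a direct argument using finitely many relations at a time.

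Fix $N\ge 1$. Since $\gir(\Gamma)=\infty$, there is a finite generating set $S_N=\{s_1,\dots,s_k\}$ of $\Gamma$ with $\gir(\Gamma,S_N)>N$. For definiteness we work with the ordered tuple $\mathbf s=(s_1,\dots,s_k)$, as in the marked-girth definition. Every non-trivial reduced word $w\in \mathbf F_k$ with $|w|\le N$ then satisfies $\operatorname{ev}_{\mathbf s}(w)\neq 1$ in $\Gamma$. The set $\mathcal W_{k,N}=\{1\neq w\in\mathbf F_k: |w|\le N\}$ is finite, so
\[
E_N:=\{\operatorname{ev}_{\mathbf s}(w): w\in\mathcal W_{k,N}\}
\]
is a finite subset of $\Gamma\setminus\{1\}$.

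Residual finiteness gives, for each $g\in E_N$, a finite-index normal subgroup $K_g\trianglelefteq\Gamma$ with $g\notin K_g$. Put
\[
K:=\bigcap_{g\in E_N}K_g .
\]
This is a finite intersection of finite-index normal subgroups, so it is again finite-index and normal. Let $Q_N:=\Gamma/K$ and let $\pi_N$ be the canonical projection, which is surjective. Then $\pi_N(S_N)$ generates $Q_N$. For each $w\in\mathcal W_{k,N}$, functoriality of evaluation, as in Lemma~\ref{lem:nielsen-quotient-lifting}, gives
\[
\operatorname{ev}_{\pi_N(\mathbf s)}(w)=\pi_N(\operatorname{ev}_{\mathbf s}(w))\neq 1,
\]
because $\operatorname{ev}_{\mathbf s}(w)\notin K$. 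Therefore $\gir(Q_N,\pi_N(S_N))>N$, and condition $(\ast)$ holds.

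The only delicate point is bookkeeping for the marking. The girth of $Q_N$ must be measured with respect to the image tuple $\pi_N(\mathbf s)$, and distinct generators of $\Gamma$ must not collapse in $Q_N$. This is automatic: the words $a_j$ and $a_ia_j^{-1}$ lie in $\mathcal W_{k,N}$ once $N\ge 2$, so the images of the $s_j$ stay non-trivial and pairwise distinct, and $\pi_N(S_N)$ has the same $k$ entries.

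Alternatively, one can cite Arzhantseva's asymptotic-freeness characterization, as the paper indicates: the marked quotients $(Q_N,\pi_N(S_N))$ converge in the space of marked groups to a free group, giving the same conclusion. The direct argument above avoids needing that citation, and I expect no real obstacle in either direction.
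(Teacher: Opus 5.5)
Your proof is correct and follows essentially the same route as the paper: you fix a generating set of marked girth $>N$, use residual finiteness to detect each of the finitely many short words, and combine the resulting finite quotients, where your intersection $K=\bigcap_{g\in E_N}K_g$ is just the kernel of the paper's product map $\Phi\colon\Gamma\to\prod_{w}Q_w$. Your choice $Q_N=\Gamma/K\cong\Phi(\Gamma)$ is slightly cleaner than the paper's, which takes $Q_N$ to be the full product $\prod_w Q_w$; $\Phi$ need not map onto that product, whereas condition $(\ast)$ requires a surjection.
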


\begin{proof}
    $(2) \implies (1)$ follows from Theorem \ref{quoitientfinitenessgirth}.

    We will prove $(1) \implies (2)$,  Suppose \[\gir(\Gamma)=\infty.\]
    Then, for any every $N\in \mathbb{N}$, there exists a finite generating set $S_N$ of $\Gamma$, we have \[\gir(\Gamma,S_N)>N.\] Let $\mathcal{B}_N$ be the collection of all nontrivial freely reduced words with respect to $\{S_N,S_N^{-1}\}$ of length at most $N$. Since \[\gir(\Gamma,S_N)>N,\] $w(S_N)\neq 1$ for every $w\in \mathcal{B}_N$. By residual finiteness of $\Gamma$, for each $w\in B_N$, there exists a quotient $\phi_w:\Gamma\to Q_w$ such that $\phi_w(w(S_N))\neq 1$. Now, consider the product map, \[\Phi:\Gamma\to \prod_{w\in B_N} Q_w.\] Then, no non-trivial freely reduced word in $w\in \mathcal{B}_N$ evaluates to identity on $\Phi(S_N)$ in the image subgroup $\Phi_N(S_N)$. Letting $Q_N=\prod_{w\in B_N}Q_w$ we get \[\gir(Q_N.\Phi(S_N))>N,\] implying (2).
\end{proof}

\begin{remark}
Theorem \ref{markedfinitequotient} shows that for the class of residually finite groups, it does not just give a sufficient condition but it gives a exact marked finite quotient reformulation (the images of some specific generating sets are remembered) of infinite girth.
\end{remark}

However, Theorem \ref{markedfinitequotient}, does not say that the infinite girth is determined by the unmarked set \[\mathcal{C}(\Gamma)=\{Q:Q \hspace{0.05in} \text{is a finite quotient of} \hspace{0.05in} \Gamma\}\] Therefore, the generating tuples and the quotient maps matter. This is important because we can not the transfer girth information to any arbitrary finite quotients, in context of the profinite rigidity studied by Bridson, McReynolds, Reid, and Spitler, \cite{bridson2020absolute}.

\subsection{Infinite girth for profinitely rigid groups}
In this section, we investigate girth questions of rigidity inspired by the work of Bridson, Conder, and Reid \cite{bridson2020absolute}. To set the context, recall that two finitely generated groups with isomorphic profinite completions share the same collection of finite quotients up to isomorphism. This leads to the study of absolute profinite rigidity: we say that a finitely generated residually finite group $\Gamma$ is absolutely profinitely rigid if the condition $\widehat{\Lambda} \cong \widehat{\Gamma}$ implies $\Lambda \cong \Gamma$ for any finitely generated residually finite group $\Lambda$. In this sense, profinite rigidity provides a framework for determining whether the totality of a group's finite quotients is enough to characterize its abstract isomorphism type.
\begin{proposition}\label{girthprofiniterigidity}
    Let $\Gamma$ be an absolutely profinitely rigid, finitely generated residually finite group. Suppose \[
    \gir(\Gamma)=\infty.
    \]If $\Lambda$ is any finitely generated residually finite group satisfying \[\widehat{\Lambda}\cong\widehat{\Gamma}\] then $\gir(\Lambda)=\infty.$ 
\end{proposition}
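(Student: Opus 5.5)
The claim is essentially immediate from the definition of absolute profinite rigidity, and the plan is to apply that definition directly and then check that girth is an isomorphism invariant. Let $\Lambda$ be a finitely generated residually finite group with $\widehat{\Lambda}\cong\widehat{\Gamma}$. Since $\Gamma$ is absolutely profinitely rigid, the definition recalled above, applied with this $\Lambda$, gives an abstract isomorphism $\theta\colon\Lambda\to\Gamma$.

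It then remains to show that $\gir$ is invariant under abstract isomorphism. Fix $N$. Since $\gir(\Gamma)=\infty$, there is a finite generating set $S_N$ of $\Gamma$ with $\gir(\Gamma,S_N)>N$. Put $T_N:=\theta^{-1}(S_N)$; this is a finite generating set of $\Lambda$. A word $w$ in the letters $T_N^{\pm1}$ evaluates to $1$ in $\Lambda$ if and only if the corresponding word in $S_N^{\pm1}$ evaluates to $1$ in $\Gamma$, because $\theta$ is an injective homomorphism. Equivalently, we have $\operatorname{Cay}(\Lambda,T_N)\cong\operatorname{Cay}(\Gamma,S_N)$ as labelled graphs. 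Hence
\[
\gir(\Lambda,T_N)=\gir(\Gamma,S_N)>N.
\]
Since $N$ was arbitrary, it follows that $\gir(\Lambda)=\infty$.

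There is no real obstacle here; the only point needing care is to use rigidity in the correct direction, obtaining $\Lambda\cong\Gamma$ itself rather than merely comparing finite quotients. Indeed, as remarked after Theorem \ref{markedfinitequotient}, the unmarked set of finite quotients alone would not suffice. One could alternatively argue through Theorem \ref{markedfinitequotient}, transporting condition $(\ast)$ via $\theta$, but the direct transport of Cayley graphs is simpler.
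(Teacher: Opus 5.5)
Your proof is correct and follows the paper's argument: absolute profinite rigidity gives an isomorphism $\Lambda\cong\Gamma$, and infinite girth is then transported along that isomorphism. The only difference is that the paper justifies the transfer by citing Akhmedov's quotient principle, whereas you check directly that $\theta^{-1}(S_N)$ has the same marked girth as $S_N$, which is more self-contained.
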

\begin{proof}
    Absolute profinite rigidity from \cite{bridson2020absolute} gives $\Lambda\cong \Gamma$, that is an isomorphims $\phi:\Lambda \to \Gamma$. A group isomorphism sends generators to generators and preserves all relations with respect to corresponding marked generating set, hence, $\gir(\Lambda)=\infty$ implies $\gir(\Gamma)=\infty.$ by Akhmedov quotient principle \cite[Proposition 1.1]{akhmedov2005girth}.
\end{proof}

The following corollary is an immediate consequence of the Proposition \ref{girthprofiniterigidity}.
\begin{corollary}
    Let $\Gamma$ be one of the non-elementary Kleinian group, Fuchian groups or hyperbolic groups which is known to be absolutely profinitely rigid \cite{bridson2020absolute}. If $\Lambda$ is finitely generated and residually finite with \[\widehat{\Lambda}\cong\widehat{\Gamma}\] then $\gir(\Lambda)=\infty.$
\end{corollary}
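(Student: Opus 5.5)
The corollary follows from Proposition~\ref{girthprofiniterigidity}, so my plan is to check its hypotheses for each listed $\Gamma$ and then invoke it. There are three hypotheses: $\Gamma$ is finitely generated and residually finite, $\Gamma$ is absolutely profinitely rigid, and $\gir(\Gamma)=\infty$.

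\emph{Step 1: finite generation, residual finiteness and rigidity.} The statement restricts attention to those $\Gamma$ that are known to be absolutely profinitely rigid by \cite{bridson2020absolute}, so I will take rigidity as given by that citation. Finite generation is part of the setting. Residual finiteness holds for the Kleinian and Fuchsian cases because they are finitely generated subgroups of $\operatorname{PSL}_2(\mathbb C)$. By Malcev's theorem, finitely generated linear groups are residually finite. For the hyperbolic case, residual finiteness is not automatic, so I would include it in the hypothesis: absolute profinite rigidity is only defined within the class of residually finite groups, so any $\Gamma$ covered by \cite{bridson2020absolute} is already residually finite.

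\emph{Step 2: infinite girth of $\Gamma$.} This is the substantive step. I would use the Girth Alternative recalled after Remark~\ref{nonvirtuallysolvableofGamme}, which holds for linear groups and for word hyperbolic groups \cite{akhmedov2005girth}. A non-elementary Kleinian or Fuchsian group contains a non-abelian free subgroup, by ping-pong on loxodromic or hyperbolic elements. Similarly, a non-elementary hyperbolic group contains a non-abelian free subgroup. In either case $\Gamma$ is not virtually solvable, since virtually solvable groups contain no free subgroups of rank $2$. The alternative then gives $\gir(\Gamma)=\infty$.

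\emph{Step 3: conclusion.} Apply Proposition~\ref{girthprofiniterigidity} to any finitely generated residually finite $\Lambda$ with $\widehat\Lambda\cong\widehat\Gamma$. Rigidity gives $\Lambda\cong\Gamma$. Girth is an isomorphism invariant, because an isomorphism carries marked generating sets to marked generating sets with the same relations. Hence $\gir(\Lambda)=\gir(\Gamma)=\infty$.

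The main obstacle is bookkeeping in Step 2 rather than any deep argument. The word ``non-elementary'' must be applied to the hyperbolic case as well, because elementary (virtually cyclic) hyperbolic groups have finite girth. The corollary should therefore be read as covering only non-elementary members of each class that are proven rigid in \cite{bridson2020absolute}.
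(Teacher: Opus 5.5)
Your proposal is correct and follows the paper's route: the corollary is a direct application of Proposition~\ref{girthprofiniterigidity}. Your Step~2 (non-elementary $\Rightarrow$ contains $F_2$ $\Rightarrow$ not virtually solvable $\Rightarrow$ $\gir(\Gamma)=\infty$ by the Girth Alternative for linear and hyperbolic groups) makes explicit the hypothesis $\gir(\Gamma)=\infty$, which the paper leaves implicit when it calls the corollary immediate.

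One aside in your final paragraph is slightly off. With the paper's convention $\gir(\mathbb Z)=\infty$, so not every elementary hyperbolic group has finite girth. Your conclusion still stands, because non-trivial finite groups are trivially profinitely rigid and have finite girth, so the corollary does need the non-elementary restriction.
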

\begin{example}\label{hyperbolictrianglegroup}
    For the triangle group, \[\Delta(p,q,r)=\{x,y | x^p=y^q=(xy)^r=1\}\] if $\frac{1}{p}+\frac{1}{q}+\frac{1}{r}<1,$ then $\Delta(p,q,r)$ is non elementary hyperbolic and $\gir(\Delta(p,g,r))=\infty,$ by \cite[Theorem 2.6]{akhmedov2003girth}. As hyperbolic triangle groups are linear \cite{calegari2025fields} and finitely generated linear groups are residually finite, therefore, every hyperbolic triangle group are residually finite \cite{MR3420}. We can use Theorem \ref{markedfinitequotient}, for every $N>0$, there exist some finite generating set $S_N$ of $\Delta(p,q,r),$ finite quotient $Q_N,$ and $\pi_N:\Delta(p,q,r)\to Q_N$ such that $\gir(Q_N,\pi_N(S_N))>N$. However, note that $S_N$ can not be chosen to be the standard generating set as indeed in every finite quotient, \[\pi_N(x)^p=1, \pi_N(y)^q=1, (\pi_N(x)\pi_N(y))^r=1.\] $\gir(Q_N,\pi_N(\{x,y\}))\leq \min\{p,q,r\}.$
\end{example}
\begin{remark}\label{rem_girthk}
    Following \cite[Definition 4.7]{Akhmedov_2025}, for $k \geq d(\Gamma)$, where $d(\Gamma)$ denotes the minimum cardinality of a generating set for $\Gamma$, we define 
    \[ \operatorname{girth}_k(\Gamma) = \sup_{\substack{S \text{ generates } \Gamma \\ |S| \leq k}} \operatorname{girth}(\Gamma, S). \]
    Infinite girth, as defined in Section \ref{sec:introduction}, does not automatically specify the smallest $k$ for which $\operatorname{girth}_k(\Gamma) = \infty$. It is therefore natural to ask whether, for a hyperbolic triangle group $\Delta(p,q,r)$ as defined in Example \ref{hyperbolictrianglegroup}, we have
    \[ \operatorname{girth}_2(\Delta(p,q,r)) = \infty? \]
    This question is significantly more subtle than the problem of ordinary infinite girth, as generating pairs of Fuchsian and triangle groups are subject to strong Nielsen-equivalence constraints. Indeed, there is a substantial body of literature dedicated to classifying the generating systems of such groups.
\end{remark}


\subsection{Congruence realization and rigidity of marked girth for linear groups}

Our next result strengthens Theorem \ref{markedfinitequotient} by taking the finite quotient to be congruence quotients of the linear groups. The following definitions are added here for completeness, which can be obtained in \cite[Section 3.4]{bourbaki1998commutative}.

\begin{definition}
    A ring $A$ is called a Jacobson ring if every prime ideal of $A$ is the intersection of a family of maximal ideals. 
\end{definition}

\begin{definition}
    A ring $A$ is a finitely generated $\mathbb{Z}$-algebra if there exists elements $a_1,a_2,\dots, a_n\in A$ such that every element of $A$ can be writen as $f(a_1,a_2,\dots, a_n)$ for some $f\in \mathbb{Z}[x_1,x_2,\dots,x_n]$.
\end{definition}
\medskip
We prove the following lemma for completeness, which can be found in any textbook in commutative algebra. 
\begin{lemma}\label{A/mfield}
    Let $A$ be a finitely generated $\mathbb{Z}$-algebra which is an integral domain. Let $a_1,a_2,\dots, a_s\in A$ be non-zero elements. Then there exists a maximal ideal $\mathfrak m\subset A$ such that \[a_i\not\in \mathfrak m\]for every $i=1,2,\dots, s$. Moreover, $A/\mathfrak m$ is a finite field.
\end{lemma}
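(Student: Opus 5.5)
The plan is to reduce to the single element $a=a_1a_2\cdots a_s$, which is non-zero because $A$ is an integral domain. A maximal ideal $\mathfrak m$ avoids every $a_i$ if and only if it avoids $a$, since $\mathfrak m$ is prime. So it suffices to produce a maximal ideal not containing $a$, and then to show that the residue field is finite.

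\textbf{Step 1: localize.} Consider the localization $A[a^{-1}]$. It is again a finitely generated $\mathbb{Z}$-algebra (adjoin one more generator $1/a$) and an integral domain. It is non-zero because $a\neq 0$ in a domain, so $a$ is not nilpotent. Hence $A[a^{-1}]$ has a maximal ideal $\mathfrak n$.

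\textbf{Step 2: the residue field is finite.} Let $L=A[a^{-1}]/\mathfrak n$. This is a field that is finitely generated as a $\mathbb{Z}$-algebra, and I would show that such a field is finite.
\begin{enumerate}
\item First rule out characteristic zero. If $\operatorname{char} L=0$, then $\mathbb{Q}\subseteq L$, so $L$ is a finitely generated $\mathbb{Q}$-algebra that is a field. By Zariski's lemma (the Nullstellensatz), $L$ is a finite extension of $\mathbb{Q}$.
\item Next apply the Artin--Tate lemma to the tower $\mathbb{Z}\subseteq\mathbb{Q}\subseteq L$. It gives that $\mathbb{Q}$ is a finitely generated $\mathbb{Z}$-algebra, i.e.\ $\mathbb{Q}=\mathbb{Z}[1/n]$ for some $n$. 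This is false: take a prime $p\nmid n$, and $1/p\notin\mathbb{Z}[1/n]$.
\item So $\operatorname{char} L=p>0$, and $L$ is a finitely generated $\mathbb{F}_p$-algebra. Zariski's lemma then makes $L$ a finite extension of $\mathbb{F}_p$, hence a finite field.
\end{enumerate}

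\textbf{Step 3: contract back to $A$.} Set $\mathfrak m=\mathfrak n\cap A$, the preimage of $\mathfrak n$ under the localization map $A\to A[a^{-1}]$. Then $A/\mathfrak m$ embeds in the finite field $L$, so it is a finite integral domain and therefore a field. Hence $\mathfrak m$ is maximal and $A/\mathfrak m$ is finite. Finally, $a\notin\mathfrak m$, because $a$ is a unit in $A[a^{-1}]$ and so cannot lie in $\mathfrak n$. Consequently no $a_i$ lies in $\mathfrak m$.

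The only substantive input is Step 2, namely that a field finitely generated as a ring is finite. This is where Zariski's lemma and the Artin--Tate argument enter, and I expect it to be the main obstacle. Alternatively, Step 2 could be replaced by citing the fact that $\mathbb{Z}$ is a Jacobson ring, so every finitely generated $\mathbb{Z}$-algebra is Jacobson with finite residue fields at maximal ideals \cite[Section 3.4]{bourbaki1998commutative}. Combined with the Jacobson property (the nilradical $0$ of $A$ is an intersection of maximal ideals, and $a$ is not nilpotent), this also yields a maximal $\mathfrak m$ with $a\notin\mathfrak m$ directly.
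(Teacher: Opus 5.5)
Your proof is correct, and it takes a different route from the paper's. The paper also reduces to $a=a_1\cdots a_s$, but then cites Bourbaki twice. First, a finitely generated $\mathbb{Z}$-algebra is a Jacobson ring, so the zero ideal of the domain $A$ is an intersection of maximal ideals, and some maximal $\mathfrak m$ misses $a$. Second, residue fields of such an algebra at maximal ideals are finite.

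You instead localize at $a$, so that any maximal ideal $\mathfrak n$ of $A[a^{-1}]$ contracts to an ideal $\mathfrak n\cap A$ that automatically avoids $a$. You then prove the one substantive fact yourself: a field that is finitely generated as a ring is finite. This comes from Zariski's lemma together with Artin--Tate, and the characteristic-$0$ case correctly fails because $\mathbb{Q}$ is not a finitely generated $\mathbb{Z}$-algebra.

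Finiteness of $L$ then does double duty. It makes $A/\mathfrak m\hookrightarrow L$ a finite domain, hence a field, so $\mathfrak m$ is maximal. It also gives finiteness of $A/\mathfrak m$ directly. In effect you have reproved the special case of the Jacobson property that the lemma needs, and your argument shows that this property follows from finiteness of residue fields rather than being a separate input.

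The paper's version is shorter but rests entirely on citations. Yours is self-contained modulo two standard lemmas and makes clear where the real content lies. The alternative you sketch in your last paragraph is exactly the paper's argument.
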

\begin{proof}
    Let $a=a_1a_2\dots a_s.$ Since $A$ is an integral domain and $a_i\neq 0$, we must have $a\neq 0$. It suffice to find a maximal ideal $\mathcal{M}$ such that $a\not \in \mathcal{M}.$ Indeed, if $a_i\in \mathcal{M}$ for some $1\leq i\leq s$, then by definition of ideal $a\in \mathcal{M}$. It follows from \cite[Theorem 3, Section 3.4]{bourbaki1998commutative} $A$ is a Jacobson ring (as $\mathbb{Z}$ is Jacobson). Because $A$ is a Jacobson ring, it follows from \cite[Proposition 3, section 3.4]{bourbaki1998commutative} that for any ideal $\mathcal{B}\in A$, \[\bigcap_{\mathfrak m\in \mathcal{M}, \mathfrak m \supset \mathcal{B}} \mathfrak m=\bigcap_{p\in\mathcal{P}, p\supset \mathcal{B}} p=0\] The last equality follows from the fact that $A$ is an integral domain. Since, $a\neq 0$, $a$ cannot lie in all maximal ideals of $A$. Hence, there exists an maximal ideal $\mathfrak m_1\in \mathcal{M}$ such that \[a\not\in \mathfrak m_1.\] Therefore, $a_i\not \in \mathfrak m_1$ for all $1\leq i\leq s$. $A/\mathfrak m$ is a finite field follows again from \cite[Corollary 1, Section 3.4]{bourbaki1998commutative}.
\end{proof}
We now prove the main result of this section.

\begin{theorem}(Congruence realization and rigidity of marked girth)
    Let $\Gamma\leq\operatorname{GL}_d(K)$ be a finitely generated linear group over a field $K$ of characteristic zero. Suppose that $\gir(\Gamma)=\infty$. Then,  $\Gamma$ satisfies the condition $(\ast)$. More precisely, for every $N\in\mathbb N$, there exist a finite generating set $S_N$ of $\Gamma$, a finitely generated integral domain $A\subset K$ with
\[
\Gamma\leq\operatorname{GL}_d(A),
\]
and a maximal ideal $\mathfrak m_N$ of $A$ such that the congruence quotient
\[
\psi_{\mathfrak m_N}(\Gamma)
\leq\operatorname{GL}_d(A/\mathfrak m_N)
\]
satisfies
\[
\gir\bigl(
\psi_{\mathfrak m_N}(\Gamma),
\psi_{\mathfrak m_N}(S_N)
\bigr)>N.
\]
\end{theorem}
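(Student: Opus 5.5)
The plan is to combine the infinite-girth hypothesis with the standard Mal'cev-type argument for residual finiteness of finitely generated linear groups. We replace the abstract residually finite separation used in Theorem \ref{markedfinitequotient} by a separation through reduction modulo a maximal ideal of a finitely generated ring, and we use Lemma \ref{A/mfield} to produce that ideal.

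Fix $N$. Since $\gir(\Gamma)=\infty$, there is a finite generating set $S_N=\{s_1,\dots,s_k\}$ of $\Gamma$ with $\gir(\Gamma,S_N)>N$. Thus $w(S_N)\neq 1$ for every $w$ in the finite set $\mathcal B_N$ of non-trivial freely reduced words of length at most $N$.

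Next we build the ring. Let $A\subset K$ be the subring generated over $\mathbb Z$ by all matrix entries of $s_j^{\pm1}$, $1\le j\le k$. Then $A$ is a finitely generated $\mathbb Z$-algebra and an integral domain, being a subring of a field. Since every element of $\Gamma$ is a product of the $s_j^{\pm1}$, we have $\Gamma\le \operatorname{GL}_d(A)$. The determinants are units of $A$, because $\det s_j^{-1}$ also lies in $A$.

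For each $w\in\mathcal B_N$, the matrix $w(S_N)-I$ is non-zero. We choose one non-zero entry $a_w\in A$ of it. This gives finitely many non-zero elements of $A$. Lemma \ref{A/mfield} then provides a maximal ideal $\mathfrak m_N$ with $a_w\notin\mathfrak m_N$ for all $w\in\mathcal B_N$, and $A/\mathfrak m_N$ is a finite field. Reduction modulo $\mathfrak m_N$ is a ring homomorphism, so it induces a group homomorphism $\psi_{\mathfrak m_N}\colon\operatorname{GL}_d(A)\to\operatorname{GL}_d(A/\mathfrak m_N)$. Its image $\psi_{\mathfrak m_N}(\Gamma)$ is finite and is generated by $\psi_{\mathfrak m_N}(S_N)$.

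For each $w\in\mathcal B_N$, we have $w(\psi_{\mathfrak m_N}(S_N))=\psi_{\mathfrak m_N}(w(S_N))$, and its entry in the chosen position is $a_w \bmod \mathfrak m_N\neq0$ off the identity. So this element is not the identity, and the marked girth of the congruence quotient exceeds $N$. In particular $(\ast)$ holds.

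Characteristic zero is not really needed for this argument; it only guarantees that we are in the setting of the statement.

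The only delicate point is bookkeeping around the generating set itself. Words of length $1$ or $2$ being non-trivial in the image ensures the images $\psi_{\mathfrak m_N}(s_j)$ are distinct and non-trivial, so the marked girth is computed with respect to a genuine $k$-element generating set. This is already covered by including those words in $\mathcal B_N$, so I expect no real obstacle beyond correctly invoking Lemma \ref{A/mfield}.
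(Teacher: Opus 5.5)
Your proposal is correct and follows the paper's proof essentially step for step: choose $S_N$ with $\gir(\Gamma,S_N)>N$, let $A$ be generated by the entries of $S_N^{\pm1}$, pick a non-zero entry $a_w$ of $w(S_N)-I_d$ for each short word, and use Lemma \ref{A/mfield} to find a maximal ideal with finite residue field that avoids every $a_w$ (the paper passes explicitly through the product $a_N=\prod_w a_w$, which the lemma does internally anyway). Your side remarks are both correct and a little more careful than the paper's write-up: the generators' determinants are units of $A$, which justifies $\Gamma\le\operatorname{GL}_d(A)$, and characteristic zero is never actually used.
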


\begin{proof}
\textit{Step 1: Fixing a finitely generated coefficient ring.}

As $\gir(\Gamma)=\infty$, for any $N>0$, one can choose a finite generating set $S_N=\{u_{N1},u_{N2},\dots, u_{Nk}\}$ of $\Gamma$ such that \[\gir(\Gamma,S_N)>N.\] Let \[T=\{u_{N1}, u_{N2}, \dots, u_{Nk}, u_{N1}^{-1}, u_{N2}^{-1}, \dots, u_{Nk}^{-1}\}.\]   For each $t\in T$, which is a $d\times d$ with entries in $K$. Write down all the matrix entries for each $t\in T$, that is, \[t_1,t_2,\dots, t_r\] where $r\leq 2k.d^2$. Define $A$ to be the subring of $K$ generated by $\mathbb{Z}$ and $t_1, t_2, \dots, t_r$ that is 
\begin{equation}
    A=\mathbb{Z}[t_{1}, t_2, \dots, t_{r}]=\{f(t_{1}, t_{2}\dots, t_{r}): f\in \mathbb{Z}[x_{1}, x_2, \dots, x_{r}]\}\subset K
\end{equation}
which can be seen as the image of the evaluation map \[e_{\{t_{1},\dots,t_{r}\}}:\mathbb{Z}[x_{1}\dots,x_{r}]\to K\] defined as $e_{\{t_{1},\dots,t_{r}\}}(f)=f(t_{1},\dots,t_{r})$. Clearly, $A$ is a finitely generated $\mathbb{Z}$-algebra. Hence, $A$ is the coeffient ring we will be working with.
\medskip

$A$ is an integral domain follows from a basic fact from the ring theory, "every unital subring of an integral domain is an integral domain". Hence $A$ is an integral domain. We may now assume that $\Gamma\leq GL_d(A)$, as every element of $\Gamma$ has entries in $A$, so $\Gamma\leq GL_d(A)$.
\medskip

\textit{Step 2: Choosing a maximal ideal to obtain the finite residue field.}

Let \[\mathcal{W}_N=\{1\neq w\in F_k:|w|\leq N, w \hspace{0.1cm}\text{is freely reduced}\}.\]

As there are finitely many non-trivial freely reduced words $w$ of length at most $N$, the set $\mathcal{W}_N$ is finite. For $w\in \mathcal{W}_N$, we have \[w(S_N)\neq I_d.\]  Now, $w(S_N)-I_d$ has at least one non-zero entry. Choose one such entry and denote it by \[a_w\in A.\] Thus, $a_w\neq 0$. Now consider the product, \[a_N=\prod_{w\in \mathcal{W}_N}a_w\] As $A$ is an integral domain, $a_w\neq 0$ for all $w\in \mathcal{W}_N$, which implies $a_N\neq 0$. By Lemma \ref{A/mfield}, we have there exists a maximal ideal $\mathfrak m_N \subset A$ such that $a_N\not\in \mathfrak m_N$, implying each $a_w\not\in \mathfrak m_N$, for all $w\in \mathcal{W}_N$ (indeed, if $a_w\in \mathfrak m_N$ for some $w$, then $a_N\in \mathfrak m_N$, which is impossible).
\medskip

Again, by Lemma \ref{A/mfield}, $A/\mathfrak m_N$ is a finite field. So we have the following reduction mod $m_N$ homomorphism as $\Gamma\leq GL_d(A)$, \[\psi_{\mathfrak m_N}:\Gamma\to GL_d(A/\mathfrak m_N)\] Set $Q_N:=\psi_{\mathfrak m_N}(\Gamma)$ is a finite subgroup of $GL_d(A/\mathfrak m_N)$. Moreover $S_N'=\psi_{\mathfrak m_N}(S_N)=\{\psi(u_{N1}), \psi(u_{N2}),\dots, \psi(u_{Nk})\}$ is a finite generating set of $Q_N$ (because $S_N$ generates $\Gamma$). For $w\in \mathcal{W}_N$, suppose \[\psi_{\mathfrak m_N}(w(S_N))=I_d \implies \psi_{\mathfrak m_N}(w(S_N)-I_d)=0.\] This means any chosen entry $a_w$ of $w(S_N)$ would be $0$ modulo $\mathfrak m_N$, which further means $a_w\in \mathfrak m_N$, which is impossible by the choice of $\mathfrak m_N$. Hence, for any non-trivial $w\in \mathcal{W}_N$, we have \[w(S_N')=w(\psi_{\mathfrak m_N}(S_N))=\psi_{\mathfrak m_N}(w(S_N))\neq 1\] in the finite quotient $Q_N$, therefore \[\gir(Q_N,S_N')=\gir(\psi_{\mathfrak m_N}(\Gamma), \psi_{\mathfrak m_N}(S_N))>N.\]
    
\end{proof}


Biggs gave an explicit construction of finite permutation groups whose Cayley graphs have valence $C$ and girth
tending to infinity as the radius $R$ of the tree tends to infinity \cite{biggs1998constructions}. We refer to the notion of asymptotic approximation introduced by Arzhantseva \cite[Definition 3.1]{arzhantseva2014asymptotic}. The following proposition reproduces \cite[Theorem 3.3]{arzhantseva2014asymptotic}, with an elementary proof included for completeness. We thank Goulnara Arzhantseva for bringing this result to our attention. 

\begin{proposition} (Arzhantseva \cite[Theorem 3.3]{arzhantseva2014asymptotic})
\label{prop:infinite-girth-asymptotically-rf}
Let $\Gamma$ be a finitely generated group. If
\[
    \operatorname{girth}(\Gamma)=\infty,
\]
then $\Gamma$ is asymptotically free and, consequently,
asymptotically residually finite.
\end{proposition}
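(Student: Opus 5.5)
We plan to unwind Arzhantseva's definition of asymptotic freeness \cite[Definition 3.1]{arzhantseva2014asymptotic} and verify it directly from the hypothesis $\gir(\Gamma)=\infty$. In that definition a finitely generated group is \emph{asymptotically free} if there is a sequence of finite generating sets $S_n$ (of possibly growing cardinality $k_n$), equivalently epimorphisms $\phi_n\colon F_{k_n}\twoheadrightarrow\Gamma$ sending the free basis to $S_n$, such that the balls of radius $n$ in $\operatorname{Cay}(\Gamma,S_n)$ and in the Cayley graph of $F_{k_n}$ with respect to its basis are isometric via $\phi_n$. Equivalently, $\phi_n$ is injective on the ball $B_{F_{k_n}}(n)$. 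It is \emph{asymptotically residually finite} if there are finite groups $Q_n$, generated by images of the $S_n$, whose $n$-balls agree with those of $(\Gamma,S_n)$.

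The first step is the translation between the two notions. By definition of girth, $\gir(\Gamma,S_n)>2n$ means that no non-trivial freely reduced word of length at most $2n$ in $S_n^{\pm1}$ represents the identity. If $u,v\in B_{F_{k_n}}(n)$ satisfy $\phi_n(u)=\phi_n(v)$, then $u^{-1}v$ lies in $B_{F_{k_n}}(2n)$ and maps to $1$, so it is trivial. Hence $\phi_n$ is injective on the $n$-ball, and it preserves adjacency in both directions. Since $\gir(\Gamma)=\infty$, we choose for each $n$ a finite generating set $S_n$ with $\gir(\Gamma,S_n)>2n$. The remark after Theorem~\ref{thm:redundant-nielsen-lifting} guarantees the entries are distinct and non-trivial, so $S_n$ really is a $k_n$-element marking. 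This gives asymptotic freeness.

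The second step derives asymptotic residual finiteness. The cleanest route is to note that free groups are residually finite. For each $n$ we take a finite quotient $F_{k_n}\twoheadrightarrow P_n$ that is injective on $B_{F_{k_n}}(2n)$, for instance a product of finite quotients separating the finitely many non-trivial elements of that ball, exactly as in the proof of Theorem~\ref{markedfinitequotient}. Then the $n$-balls of $P_n$ (with the image basis), of $F_{k_n}$, and of $(\Gamma,S_n)$ are all isometric. The marked group $P_n$ is therefore a finite group approximating $\Gamma$ at scale $n$, which is what Arzhantseva's definition requires.

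If $\Gamma$ is additionally residually finite, we can instead take the finite quotients to be quotients of $\Gamma$ itself, via Theorem~\ref{markedfinitequotient}. The main obstacle is not mathematical depth but matching conventions precisely. We must check whether Arzhantseva requires a fixed generating set size, whether the approximating finite groups must be quotients of $\Gamma$, and whether the ball radius is $n$ or $2n$. Before committing to the constructions above, we would first pin down the exact form of \cite[Definition 3.1]{arzhantseva2014asymptotic} and adjust the radius bookkeeping (using girth $>2n+1$ if needed) accordingly.
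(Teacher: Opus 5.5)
Your proof is correct and is essentially the paper's argument: large marked girth makes the $n$-ball of $(\Gamma,S_n)$ coincide with the free one, and residual finiteness of free groups then gives the second claim. The paper phrases the first step as an injective partial homomorphism $B_{S_n}(n)\to F_n$ using girth $>3n$, where you use injectivity of $\phi_n$ on the free $n$-ball; for the second step the paper simply uses that free groups are residually finite, rather than building finite quotients $P_n$ as you do.

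The only repair is the bookkeeping you already suspected. Reflecting adjacency needs girth $>2n+1$ rather than $>2n$, because a relation of length exactly $2n+1$ adds an edge between two points of the $n$-sphere. Likewise, $P_n$ should be chosen so that its kernel meets $B_{F_{k_n}}(2n+1)$ trivially.
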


\begin{proof}
For every $n\geq1$, choose a finite generating set
\[
    S_n=\{s_{n,1},\ldots,s_{n,r_n}\}
\]
of $\Gamma$ such that
\[
    \operatorname{girth}(\Gamma,S_n)>3n.
\]
Let
\[
    F_n
      :=
    F(x_{n,1},\ldots,x_{n,r_n})
\]
be the free group freely generated by a set in bijection with $S_n$.

Denote by
\[
    B_{S_n}(n)
      =
    \{g\in\Gamma:|g|_{S_n}\leq n\}
\]
the ball of radius $n$ in $\Gamma$ with respect to $S_n$.

For every $g\in B_{S_n}(n)$, choose a word
\[
    u_g=u_g(x_{n,1},\ldots,x_{n,r_n})
\]
of length at most $n$ such that
\[
    g=u_g(s_{n,1},\ldots,s_{n,r_n}),
\]
and define
\[
    \iota_n\colon B_{S_n}(n)\longrightarrow F_n,
    \qquad
    \iota_n(g):=u_g.
\]

We first verify that $\iota_n$ is well-defined. Suppose that $u$ and
$v$ are two words of length at most $n$ representing the same element
of $\Gamma$. Then
\[
    uv^{-1}=1
\]
in $\Gamma$, and
\[
    |uv^{-1}|\leq2n.
\]
Since
\[
    \operatorname{girth}(\Gamma,S_n)>3n,
\]
the freely reduced form of $uv^{-1}$ must be trivial. Hence $u=v$ in
the free group $F_n$.

The same argument shows that $\iota_n$ is injective. Indeed, if
\[
    \iota_n(g)=\iota_n(h),
\]
then the corresponding words are equal in $F_n$, and therefore they
have the same evaluation in $\Gamma$, giving $g=h$.

We next verify that $\iota_n$ preserves multiplication whenever the
multiplication is visible in the ball. Let
\[
    g,h,gh\in B_{S_n}(n).
\]
Choose words $u_g,u_h,u_{gh}$ of length at most $n$ representing
these elements. In $\Gamma$, one has
\[
    u_gu_hu_{gh}^{-1}=1.
\]
The length of this word is at most $3n$. Since the marked girth is
greater than $3n$, it follows that
\[
    u_gu_hu_{gh}^{-1}=1
\]
already in the free group $F_n$. Thus
\[
    \iota_n(g)\iota_n(h)=\iota_n(gh).
\]

Therefore
\[
    \iota_n\colon B_{S_n}(n)\longrightarrow F_n
\]
is an injective partial homomorphism. It follows that the sequence of
marked groups $(\Gamma,S_n)$ is asymptotically approximated by free
groups.

Since free groups are residually finite, $\Gamma$ is asymptotically
residually finite.
\end{proof}

\begin{corollary}
\label{cor:groups-asymptotically-rf}
Every group shown in this paper to have infinite girth is
asymptotically free and, in particular, asymptotically residually
finite.
\end{corollary}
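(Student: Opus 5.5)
The statement to be proved is Corollary~\ref{cor:groups-asymptotically-rf}. The plan is to reduce it entirely to Proposition~\ref{prop:infinite-girth-asymptotically-rf}. That proposition shows that any finitely generated group $\Gamma$ with $\gir(\Gamma)=\infty$ is asymptotically free, and hence asymptotically residually finite. The only requirement is that its hypotheses hold: the group is finitely generated and has infinite girth.

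So the proof is a bookkeeping pass through every group the paper has shown to have infinite girth. We check in each case that the group is finitely generated and that the conclusion $\gir=\infty$ really was established. The cases are as follows.
\begin{enumerate}
\item Groups covered by Theorem~\ref{thm:redundant-nielsen-lifting}, Theorem~\ref{thm:almost-simple-nielsen-lifting}, and Corollaries~\ref{cor:alternating-quotients}, \ref{cor:general-simple-quotients} and~\ref{cor:quasisimple-quotients}. Here finite generation is part of the hypotheses, and $\gir^{=}_k(\Gamma)=\infty$ gives $\gir(\Gamma)=\infty$.
\item The B.~H.~Neumann group $\Gamma_0=\langle\pi,\tau\rangle$ of Theorem~\ref{thm:neumann-composition-factors}, which is $2$-generated.
\item $\Alt_{\mathrm{fin}}(\mathbb Z)\rtimes\mathbb Z$ of Theorem~\ref{AltZsemiZ}, which is generated by $\{t,\sigma_N\}$. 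Any finitely generated group surjecting onto it also has infinite girth, by the quotient principle of \cite{akhmedov2005girth}.
\item The Bou-Rabee--Seward group $B_f=\langle s,t\rangle$ of Theorem~\ref{thm:proper-finite-simple-sections}.
\item The groups in the profinite-rigidity discussion, Proposition~\ref{girthprofiniterigidity} and its corollary, including the hyperbolic triangle groups. These are finitely generated by assumption.
\end{enumerate}

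For each such group we choose generating sets $S_n$ with $\gir(\Gamma,S_n)>3n$, which is possible since the girth is infinite. Proposition~\ref{prop:infinite-girth-asymptotically-rf} then supplies injective partial homomorphisms $B_{S_n}(n)\to F_n$, which give asymptotic freeness. Asymptotic residual finiteness follows from the residual finiteness of free groups, as in that proposition.

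There is no real obstacle; the only care needed is to make sure the list is exhaustive. One point deserves a check: for groups obtained via the quotient principle, we should confirm that the infinite-girth conclusion applies to the group itself and not merely to a quotient.
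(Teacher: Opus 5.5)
Your proposal is correct and matches the paper, whose proof is the single line ``this follows immediately from Proposition~\ref{prop:infinite-girth-asymptotically-rf}.'' The case-by-case enumeration is unnecessary, because that proposition applies uniformly to every finitely generated group of infinite girth; for the same reason, it is harmless that your list omits the Kassabov--Pak groups of Proposition~\ref{KPinfinitegirth}.
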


\begin{proof}
This follows immediately from
Proposition~\ref{prop:infinite-girth-asymptotically-rf}.
\end{proof}

\section{Open Questions and remarks}
In following result of Pyber \cite[Theorem C]{MR2031168}, provides a particularly strong failure of absolute
profinite rigidity. More precisely, Pyber constructs a continuum
$\{\Gamma_\alpha\}_{\alpha\in I}$ of pairwise nonisomorphic
four-generated residually finite groups such that
\[
    \widehat{\Gamma_\alpha}\cong\widehat{\Gamma_\beta}
    \qquad(\alpha,\beta\in I).
\]
Consequently, no group $\Gamma_\alpha$ in this family is absolutely
profinitely rigid. 

\begin{question}\label{girthasprofiniteinvartiant}
   Does there exist a pair of non-isomorphic groups $\Gamma_\alpha, \Gamma_\beta$ in the family constructed by Pyber \cite[Theorem C]{MR2031168} such that $\gir(\Gamma_\alpha) = \infty$ while $\gir(\Gamma_\beta) < \infty$?
\end{question}

It is shown by Akhmedov \cite{MR2705763} that girth type is not a quasi-isometry invariant. Our Question \ref{girthasprofiniteinvartiant} is asking whether girth is a profinite invariant.

\begin{remark}
\cite[Theorem C]{MR2031168} does not by itself imply that girth is not a
profinite invariant. To obtain such a conclusion, one would have to
exhibit two groups $\Gamma_\alpha$ and $\Gamma_\beta$ in Pyber's
family satisfying
\[
    \gir(\Gamma_\alpha)=\infty
    \quad\text{and}\quad
    \gir(\Gamma_\beta)<\infty.
\]
The existence of continuum many nonisomorphic groups with the same
profinite completion only establishes the failure of absolute
profinite rigidity; it does not force variation of girth within the
family.
\end{remark}

This following proposition shows satisfying group law as an obstruction to infinite girth for the class of residually finite groups with isomorphic profinite completion.

\begin{proposition}\label{prop:laws-profinite}
Let $G$ and $K$ be residually finite groups. If their profinite completions are isomorphic, $\widehat G \cong \widehat K$, then $G$ and $K$ satisfy exactly the same group laws.
\end{proposition}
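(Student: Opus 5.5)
The plan is to show that, for a residually finite group, the laws it satisfies are exactly the laws satisfied by all of its finite quotients. Once this is in place, the set of finite quotients of $G$ is an invariant of $\widehat G$, and the conclusion follows. Concretely, for a word $w\in F_k$ I will show that $w$ is a law of $G$ if and only if $w$ is a law in every finite quotient of $G$. I then invoke the standard fact, already recalled in the discussion of profinite rigidity, that two finitely generated residually finite groups with $\widehat G\cong\widehat K$ have the same finite quotients up to isomorphism.

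For the key equivalence, one direction is immediate: laws pass to quotients, exactly as in the proof of Proposition~\ref{nolawforfinitesimple}. Conversely, suppose $w$ is a law in every finite quotient of $G$, and suppose toward a contradiction that $w(g_1,\ldots,g_k)\neq 1$ for some $g_i\in G$. By residual finiteness there is an epimorphism $\phi\colon G\twoheadrightarrow Q$ with $Q$ finite and $\phi(w(g_1,\ldots,g_k))\neq1$. Since $\phi(w(g_1,\ldots,g_k))=w(\phi(g_1),\ldots,\phi(g_k))$, this contradicts the assumption that $w$ is a law of $Q$.

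Next I transfer finite quotients across the isomorphism of completions. Every finite quotient $G\twoheadrightarrow Q$ extends to a continuous epimorphism $\widehat G\twoheadrightarrow Q$, because $G$ is dense in $\widehat G$ and the map factors through $G/N\cong\widehat G/\overline N$. Conversely, a continuous epimorphism $\widehat G\twoheadrightarrow Q$ restricts to an epimorphism on $G$, again by density, since the image of $G$ is dense in the finite discrete group $Q$. Thus the isomorphism classes of finite quotients of $G$ coincide with those of continuous finite quotients of $\widehat G$. The same holds for $K$, so $\widehat G\cong\widehat K$ gives $G$ and $K$ the same finite quotients. Combining this with the equivalence above yields the proposition.

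The main obstacle is the continuity issue for a general abstract isomorphism $\widehat G\cong\widehat K$ when the groups are not finitely generated. For finitely generated $G,K$, Nikolov--Segal guarantees that every finite-index subgroup of the completion is open, so the argument goes through. In general I would either interpret the hypothesis as an isomorphism of profinite groups, i.e.\ a topological isomorphism, or add finite generation, matching the setting of the surrounding discussion. There is also an alternative route that sidesteps quotients altogether. Since $w$ is a law of $\widehat G$ if and only if it is a law of $G$ (by density of $G^k$ in $\widehat G^k$ and continuity of the word map, together with the fact that $\widehat G$ is Hausdorff), a purely algebraic isomorphism $\widehat G\cong\widehat K$ directly transfers laws. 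I would in fact present this route as the main proof, since it needs no continuity of the isomorphism.
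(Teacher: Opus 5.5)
Your second route is exactly the paper's proof, and it is correct for the statement as given, with no finite generation and no continuity of the isomorphism needed: density of $G^k$ in $\widehat G^k$, continuity of the word map and closedness of $\{1\}$ show that $G$ and $\widehat G$ satisfy the same laws, residual finiteness makes $G \le \widehat G$, and an abstract isomorphism $\widehat G\cong\widehat K$ then transfers laws. Your first route through finite quotients is also valid, but only under the extra hypotheses you identify (a topological isomorphism, or finite generation together with Nikolov--Segal), so presenting the density argument as the main proof is the right choice.
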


\begin{proof}
Suppose that $w(x_1,\ldots,x_r)$ is a group law in $G$. By definition, this means that for every $r$-tuple of elements $(g_1, \ldots, g_r) \in G^r$, the word evaluates to the identity: $w(g_1, \ldots, g_r) = 1_G$.

We consider the word map $w \colon \widehat{G}^r \to \widehat{G}$, defined by extending the group operations to the profinite completion. Since the group operations in $\widehat{G}$ are continuous by construction, the map $w$ is a continuous function from the product space $\widehat{G}^r$ to $\widehat{G}$. 

Because $G$ is residually finite, it embeds densely into its profinite completion $\widehat{G}$. Consequently, the set $G^r$ is dense in $\widehat{G}^r$. Since $w$ is trivial on the dense subset $G^r$ (as $w$ is a law in $G$), and since the identity $\{1_{\widehat{G}}\}$ is a closed set in the profinite topology, the continuity of $w$ implies that $w$ must be trivial on the entire domain $\widehat{G}^r$. Thus, $w$ is a group law for $\widehat{G}$.

Now, let $\Phi \colon \widehat{G} \to \widehat{K}$ be the given isomorphism of profinite completions. Since $\Phi$ preserves the group structure, it maps the identity of $\widehat{G}$ to the identity of $\widehat{K}$. Therefore, the word $w$ is also a law in $\widehat{K}$. Restricting this result to the dense subgroup $K \leq \widehat{K}$, we conclude that $w(k_1, \ldots, k_r) = 1_K$ for all $k_i \in K$. Hence, $w$ is a law in $K$.

The reverse implication follows by symmetry, interchanging the roles of $G$ and $K$.
\end{proof}

\begin{question}\label{intermediategrouphasinfinitegirth}
 Does every finitely generated group of intermediate growth have infinite girth?
\end{question}
\begin{definition}
    A $k$-generated group $G$ is said to satisfy an \emph{almost identity} if for all $k\in \mathbb{N}$, there exists a non-trivial freely reduced word $w(x_1,x_2,\dots,x_k)\in \mathbb{F}_k$ such that if \[w(g_1, g_2, \dots, g_k)=1, \quad g_1,g_2,\dots,g_k\in G,\] then $\{g_1,g_2,\dots, g_k\}$ generates the group $G$.
\end{definition}
\begin{remark}
While the first Grigorchuk group $\mathfrak{G}$ provides a positive example (by \cite[Corollary 6.13] {bartholdi2015ordering}) to Question \ref{intermediategrouphasinfinitegirth}, it remains an open problem whether intermediate growth generally obstructs the existence of almost-identities, \cite[Question 8.6]{bartholdi2015ordering}. If there exists a group of intermediate growth with finite girth, it would necessarily satisfy a non-trivial almost-identity, providing a positive answer to the question posed by Bartholdi and Erschler \cite{bartholdi2015ordering}.
\end{remark}

\subsection{Infinite Girth in Groups of Oscillating Intermediate Growth}

In this subsection, we apply the Akhmedov quotient principle \cite[Proposition 1.1]{akhmedov2005girth} to a class of groups with highly irregular growth. We specifically consider the groups of oscillating intermediate growth constructed by Kassabov and Pak \cite{MR3034295}. These groups are of particular interest as they provide examples of finitely generated residually finite groups whose growth rates fluctuate between different intermediate scales.

\begin{proposition}\label{KPinfinitegirth}
Let $\Gamma$ be a group of oscillating intermediate growth obtained from the Kassabov--Pak construction based on the first Grigorchuk group $\mathfrak{G} = G_{(012)^\infty}$. Then $\Gamma$ is residually finite and satisfies
\[
    \gir(\Gamma) = \infty.
\]
Consequently, the Kassabov--Pak construction yields an uncountable family of residually finite groups of oscillating intermediate growth and infinite girth.
\end{proposition}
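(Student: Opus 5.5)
The plan is to prove the two assertions separately: residual finiteness first, then infinite girth via the Akhmedov quotient principle \cite[Proposition 1.1]{akhmedov2005girth}. Recall that the Kassabov--Pak groups $\Gamma$ are built as subgroups of a product
\[
\Gamma\leq \mathfrak G\times \prod_i H_i,
\]
where the $H_i$ are suitable finite groups (in their construction, finite groups acting on levels of the binary tree, typically products of alternating groups). The generators of $\Gamma$ are diagonal-type elements whose first coordinate is a standard generator of $\mathfrak G$.

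\textbf{Residual finiteness.} $\mathfrak G$ is residually finite, since it acts faithfully on the rooted binary tree and the level stabilisers have finite index. Every $H_i$ is finite. Hence the full product $\mathfrak G\times\prod_i H_i$ is residually finite, because the coordinate projections together with the finite quotients of $\mathfrak G$ separate points. Residual finiteness passes to subgroups, so $\Gamma$ is residually finite.

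\textbf{Infinite girth.} The key point is that the projection $p\colon\Gamma\to\mathfrak G$ onto the first coordinate is surjective. This holds because the images of the chosen generators of $\Gamma$ are exactly the standard generators $a,b,c,d$ of $\mathfrak G$ (I would verify this against the explicit description of the generators in \cite{MR3034295}). By \cite[Corollary 6.13]{bartholdi2015ordering}, $\gir(\mathfrak G)=\infty$. Akhmedov's quotient principle then says that if a finitely generated group surjects onto a group of infinite girth, it also has infinite girth. Concretely, given a generating set $T$ of $\mathfrak G$ with $\gir(\mathfrak G,T)>R$, one lifts $T$ and adjoins a generating set of the kernel, suitably modified by multiplying by long words so that no short relations appear. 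Applying this to $p$ gives $\gir(\Gamma)=\infty$.

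\textbf{The uncountable family.} Kassabov and Pak produce continuum many such groups, one for each choice of the oscillation parameters $(H_i)$. Each of them surjects onto $\mathfrak G$, so the argument above applies uniformly, and every member of the family is residually finite with infinite girth.

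The main obstacle is the surjectivity of $p$. The precise construction in \cite{MR3034295} may instead embed $\Gamma$ into a product in which the first factor is a wreath-type or branch-type modification of $\mathfrak G$ rather than $\mathfrak G$ itself. In that case I would first look for an epimorphism onto $\mathfrak G$ or onto a group already known to have infinite girth, for instance by quotienting out $\prod_i H_i$ together with the finite-support part. If no such epimorphism is available, I would fall back on Theorem \ref{thm:redundant-nielsen-lifting}, using the finite quotients $H_i$ if these are alternating groups of growing degree, via Corollary \ref{cor:alternating-quotients}.
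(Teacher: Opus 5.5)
Your proposal is correct and follows the paper's proof. Both arguments use the same four steps: residual finiteness from an embedding into a product of residually finite groups, a surjection $\Gamma\twoheadrightarrow\mathfrak G$, $\gir(\mathfrak G)=\infty$ by Bartholdi--Erschler \cite{bartholdi2015ordering}, and Akhmedov's quotient principle \cite[Proposition 1.1]{akhmedov2005girth}. The only difference is how you get the surjection: the paper views $\Gamma$ as a diagonal product of finite marked groups $G_i$ converging to $\mathfrak G$, so every relation of $\Gamma$ holds in all $G_i$ and hence in $\mathfrak G$, while you use a coordinate projection; in either model this surjection is the one fact taken from \cite{MR3034295}.
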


\begin{proof}
Recall that a group $\Gamma$ in the Kassabov--Pak construction is defined as the diagonal product
\[
    \Gamma = \bigotimes_{i \geq 1} G_i,
\]
where each $G_i$ is a finite group. By construction, $\Gamma$ embeds into the direct product $\prod_{i \geq 1} G_i$. Since the product of finite groups is residually finite and any subgroup of a residually finite group is itself residually finite, it follows immediately that $\Gamma$ is residually finite.

The fundamental property of this construction is that the sequence of marked groups $(G_i)$ converges to the base group $G_\omega$ in the space of marked groups. In the case where $G_\omega$ is chosen to be the first Grigorchuk group $\mathfrak{G}$, this convergence provides a canonical surjective homomorphism
\begin{equation}\label{mapsontogrigorchuk}
    \Pi \colon \Gamma \twoheadrightarrow \mathfrak{G}.
\end{equation}
A result of Bartholdi and Erschler \cite[Corollary 6.12]{bartholdi2015ordering} establishes that $\mathfrak{G}$ preforms the free group $F_3$; specifically, this implies that $\gir(\mathfrak{G}) = \infty$. By the Akhmedov quotient principle \cite[Proposition 1.1]{akhmedov2005girth}, $\gir(\Gamma)=\infty$ since $\Gamma$ admits a surjection onto a group of infinite girth.
\end{proof}

\begin{remark}
Proposition \ref{KPinfinitegirth} also provide an affirmative answer to our Question \ref{intermediategrouphasinfinitegirth} demonstrating that infinite girth is compatible with intermediate growth, even in the case of oscillating growth. Furthermore, since $\mathfrak{G}$ satisfies no non-trivial law \cite[Corollary 6.12]{bartholdi2015ordering}, \cite[Lemma 2.4]{bartholdi2015ordering}. Hence by (\ref{mapsontogrigorchuk}), $\Gamma$ is also law-free. Thus, this gives us an uncountable family of groups that simultaneously satisfy the following properties:
\begin{enumerate}
    \item $\Gamma$ is finitely generated and residually finite;
    \item $\Gamma$ possesses oscillating intermediate growth;
    \item $\Gamma$ satisfies no non-trivial group law;
    \item $\Gamma$ contains no non-abelian free subgroups;
    \item $\gir(\Gamma) = \infty$.
\end{enumerate}
\end{remark}

\subsection{Lawlessness, almost-identities, and infinite girth}
\label{subsec:lawlessness-almost-identities}

The absence of a non-trivial group law is a necessary, but not
sufficient, condition for infinite girth. Recent work of Bradford
\cite{MR4685112} provides a quantitative refinement of
lawlessness that helps clarify this distinction.

Let $\Gamma$ be a finitely generated group with finite generating set
$S$. For a non-trivial word $w\in F_k$ which is not a law in $\Gamma$,
Bradford defines the complexity of $w$ to be
\[
 \chi_\Gamma^S(w)
 =
 \min\left\{
   \sum_{i=1}^{k}|g_i|_S :
   w(g_1,\ldots,g_k)\neq 1
 \right\}.
\]
The lawlessness growth function $\mathcal A_\Gamma^S(n)$ is obtained by
taking the maximum of $\chi_\Gamma^S(w)$ over all non-trivial words of
length at most $n$.

Bradford proves that if $\Gamma$ is lawless, then
$\mathcal A_\Gamma^S$ is bounded if and only if $\Gamma$ contains a
non-abelian free subgroup. He also constructs finitely generated
elementary amenable lawless groups whose lawlessness growth is
arbitrarily slow, as well as lawless torsion groups with at least
linear lawlessness growth.

There is, however, an essential distinction between lawlessness and
infinite girth. If $\Gamma$ is lawless, then for every $R\geq 1$ there
exist $g_1,\ldots,g_k\in\Gamma$ such that
\[
  w(g_1,\ldots,g_k)\neq 1
\]
for every non-trivial word $w\in F_k$ of length at most $R$. The tuple
$(g_1,\ldots,g_k)$ need not generate $\Gamma$. By contrast, infinite
girth requires generating tuples with this property.

\begin{definition}
Let $\Gamma$ be a $k$-generated group. A non-trivial word
$w(x_1,\ldots,x_k)\in F_k$ is called a \emph{$k$-almost-identity} for
$\Gamma$ if
\[
  w(g_1,\ldots,g_k)=1
\]
for every generating $k$-tuple $(g_1,\ldots,g_k)$ of $\Gamma$.
\end{definition}

\noindent Let use recall the definition of $\gir^{=}_k(G)$ defined in section \ref{Nielsencomponent}, that is for $k\geq d(G)$
\[
  \gir^{=}_k(G)
  :=
  \sup\bigl\{
       \gir(G,\mathbf{g}) :
       \mathbf{g}\in G^k,\ 
       \langle g_1,\ldots,g_k\rangle=G
     \bigr\}.
\]
Here we are considering all generating sets of size exactly $k$, which is different from $\gir_k(\Gamma)$ defined in Remark \ref{rem_girthk}.

\begin{remark}
    Let us note that calling a group, with $H$ equipped with the basic choice of the metric, `$G$ preforms $H$’ (in the sense of Bartholdi and Erschler \cite[Corollary~6.7]{bartholdi2015ordering}) is same as saying ‘$G$ is asymptotically $H$’, in the sense of Arzhantseva \cite[Definition 3.1]{arzhantseva2014asymptotic}.
\end{remark}
\begin{proposition}\label{prop:almost-identity-girth}
Let $\Gamma$ be a $k$-generated group. Then the following are
equivalent:
\begin{enumerate}
    \item $\gir^{=}_k(\Gamma)=\infty$;
    \item $\Gamma$ satisfies no non-trivial $k$-almost-identity;
    \item $\Gamma$ preforms the free group $F_k$ or equivalently, $\Gamma$ is asymptotically $\{F_k\}$.
\end{enumerate}
\end{proposition}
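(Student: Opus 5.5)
The plan is to prove (1)$\Leftrightarrow$(2) directly and (1)$\Leftrightarrow$(3) by unwinding the definitions of convergence in the space of marked groups.

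\emph{(1)$\Rightarrow$(2).} Let $w\in F_k$ be non-trivial, of length $\ell$. By (1) there is a generating $k$-tuple $\mathbf g$ with $\gir(\Gamma,\mathbf g)>\ell$. Then $w(\mathbf g)\neq 1$, so $w$ is not a $k$-almost-identity.

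\emph{(2)$\Rightarrow$(1).} Argue by contraposition and suppose $\gir^{=}_k(\Gamma)=M<\infty$. Then every generating $k$-tuple satisfies at least one of the finitely many non-trivial reduced words $w_1,\ldots,w_m$ of length at most $M$, though which one may depend on the tuple. The task is to merge these into a single non-trivial word that vanishes whenever some $w_j$ vanishes. This merging step is the main obstacle.

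\emph{The merging step.} For $k\ge2$ I would build the word inductively, setting $v_1=w_1$ and $v_{j+1}=[v_j,\,c_j w_{j+1} c_j^{-1}]$ for a suitable $c_j\in F_k$.
\begin{itemize}
\item \emph{Vanishing.} Whenever $w_{j+1}(\mathbf g)=1$, the conjugate $c_jw_{j+1}c_j^{-1}$ also evaluates to $1$, so $v_{j+1}(\mathbf g)=1$. Whenever $v_j(\mathbf g)=1$, the commutator evaluates to $1$ as well.
\item \emph{Non-triviality.} In a free group two elements commute if and only if they are powers of a common element. Given non-trivial $v_j$ and $w_{j+1}$, one can choose $c_j$, for instance a large power of a basis letter not lying in the cyclic root of $v_j$, so that $c_jw_{j+1}c_j^{-1}$ does not commute with $v_j$. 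Then $v_{j+1}\neq1$ in $F_k$.
\end{itemize}
After $m-1$ steps, $v_m$ is a non-trivial word vanishing on every generating $k$-tuple, i.e.\ a $k$-almost-identity.

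For $k=1$, $\Gamma$ is cyclic and the argument is separate. If $\Gamma\cong\mathbb Z$ then $\gir^{=}_1=\infty$ and no non-trivial word $x^n$ vanishes on a generator, so both (1) and (2) hold. If $|\Gamma|=n<\infty$ then $x^n$ is a $1$-almost-identity and $\gir^{=}_1(\Gamma)\le n$, so both (1) and (2) fail.

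\emph{(1)$\Leftrightarrow$(3).} Using the remark identifying ``preforms'' with Arzhantseva's asymptotic approximation, (3) says that there are generating $k$-tuples $\mathbf g_n$ such that the marked balls of radius $n$ in $(\Gamma,\mathbf g_n)$ embed into $F_k$ via partial isomorphisms respecting the markings. Equivalently, $(\Gamma,\mathbf g_n)\to(F_k,\text{basis})$ in the space of marked groups.

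Given (1), choose $\mathbf g_n$ with $\gir(\Gamma,\mathbf g_n)>3n$ and repeat verbatim the proof of Proposition~\ref{prop:infinite-girth-asymptotically-rf}, keeping exactly $k$ generators. Conversely, a marked partial isomorphism on balls of radius $n$ forces every relation $w(\mathbf g_n)=1$ with $|w|\le 2n$ to hold already in $F_k$, hence to be trivial. Therefore $\gir(\Gamma,\mathbf g_n)>2n$ for all $n$, which gives (1).
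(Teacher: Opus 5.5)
Your proof is correct. For $(1)\Leftrightarrow(2)$ you follow the paper's route. The easy direction uses $\gir^{=}_k(\Gamma)\le |w|$, and the other uses a single word $W$ that absorbs all words of length at most $R$.

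The difference is that you actually construct $W$, whereas the paper only asserts that such a word exists. Your iterated commutators $v_{j+1}=[v_j,c_jw_{j+1}c_j^{-1}]$ supply exactly the missing merging lemma. You also correctly notice that this needs $k\ge 2$. In $F_1$ one would instead take $x^{a_1\cdots a_m}$, or, as you do, handle cyclic groups by hand.

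The one step that deserves an extra line is the choice of conjugator. Let $\langle r\rangle$ be the centralizer of $v_j$ and let $a$ be a basis letter with $a\notin\langle r\rangle$. Suppose both $a^{N}w_{j+1}a^{-N}$ and $a^{N'}w_{j+1}a^{-N'}$ lay in $\langle r\rangle$ for some $N\neq N'$. Then $a^{N'-N}$ would conjugate some $r^p$ with $p\neq 0$ to some $r^q$. In a free group this forces $q=p$, so $a^{N'-N}\in C(r^p)=\langle r\rangle$, and hence $a\in C(a^{N'-N})=\langle r\rangle$, a contradiction. So at most one exponent fails, which justifies your ``large power''.

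For condition (3) the routes genuinely differ. The paper obtains $(2)\Leftrightarrow(3)$ purely by citing Olshanskii--Sapir and Bartholdi--Erschler. You instead prove $(1)\Leftrightarrow(3)$ directly from the definition of convergence $(\Gamma,\mathbf g_n)\to F_k$ (with its standard basis) in the space of marked groups, reusing the ball-embedding argument of Proposition~\ref{prop:infinite-girth-asymptotically-rf}.

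Your version is self-contained, and it could be shortened further. Convergence to $F_k$ with its basis means, by definition, that the length of the shortest relation of $\mathbf g_n$ tends to infinity. Accordingly, your converse direction uses only the marked partial-homomorphism property, not injectivity. The paper's version is shorter, but it leaves the merging lemma unproved and outsources the link with preforming entirely to the literature.
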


\begin{proof}
The equivalence of (2) and (3) is due to Olshanskii and Sapir; see also
Bartholdi and Erschler \cite[Corollary~6.7]{bartholdi2015ordering}.

We give the direct connection with girth. For $(1)\implies (2)$, suppose first that
$\gir^{=}_k(\Gamma)=\infty$. If
$w\in F_k\setminus\{1\}$ were a non-trivial $k$-almost-identity, then every
generating $k$-tuple of $\Gamma$ would satisfy the relation $w=1$.
Consequently,
\[
  \gir^{=}_k(\Gamma)\leq |w|,
\]
which is a contradiction.

Conversely, for $(2) \implies (1)$, suppose that $\Gamma$ satisfies no non-trivial
$k$-almost-identity. Fix $R\geq 1$, and enumerate the non-trivial
freely reduced words in $F_k$ of length at most $R$ as
\[
  w_1,\ldots,w_m.
\]
There exists a non-trivial word $W\in F_k$ whose vanishing set in any
group contains the union of the vanishing sets of
$w_1,\ldots,w_m$. Since $W$ is not a $k$-almost-identity, there is a
generating $k$-tuple $(g_1,\ldots,g_k)$ of $\Gamma$ such that
\[
  W(g_1,\ldots,g_k)\neq 1.
\]
In particular
\[
  w_i(g_1,\ldots,g_k)\neq 1
  \qquad (1\leq i\leq m).
\]
Thus, the corresponding Cayley graph has girth greater than $R$.
Since $R$ was arbitrary,
\[
  \gir^{=}_k(\Gamma)=\infty.
\]
\end{proof}

\begin{question}
Do the elementary amenable lawless groups constructed by Bradford
\cite[Theorem~1.2]{MR4685112} have infinite girth? More
generally, which quantitative conditions on the lawlessness growth
function $\mathcal A_\Gamma$ imply the absence of almost-identities?
\end{question}



\section{Acknowledgements}
We are deeply grateful to László Pyber for raising several fundamental questions underlying this work and for his invaluable guidance. We thank Miklós Abért for discussions on the Nielsen-component lifting approach, which led to the resolution of Question 1.1 in the broadest sense and the formulation of Theorem C. We also thank Endre Szabó for suggesting the use of a large generating element $\sigma_N$ for each $N$, which was crucial to the proof of Theorem 3.8, and Hongyi Huang for raising the question of extending Theorem 2.11 to quasi-simple groups, leading to Theorem 2.19. We thank Goulnara Arzhantseva for her interest in this manuscript and bringing to our attention her work on asymptotic residual finiteness. OpenAI ChatGPT (model Sol 5.6) was used for linguistic polishing and grammatical corrections. All main mathematical ideas are due to the author, who takes full responsibility for the mathematical content.

\bibliography{name}
\bibliographystyle{amsalpha}
\end{document}